\documentclass[11pt]{article}
\usepackage{mathrsfs}
\usepackage{amssymb}
\usepackage{latexsym}
\usepackage{amsmath}
\usepackage{galois}
\usepackage{amsthm}
\usepackage{enumerate}
\usepackage{hyperref}
\usepackage{algorithm}
\usepackage{algorithmic}
\usepackage{amsfonts}
\usepackage{graphicx}
\usepackage{epstopdf}
\usepackage{amsopn}
\usepackage{enumitem}

\usepackage{booktabs}
\usepackage{multirow}
\usepackage{bm}
\usepackage{color}

\newcommand{\inp}[1]{\left\langle#1\right\rangle}
\DeclareMathOperator*{\argmin}{argmin}

\definecolor{blue}{rgb}{0,0,0.9}
\definecolor{red}{rgb}{0.9,0,0}
\definecolor{green}{rgb}{0,0.9,0}
\definecolor{lightgreen}{rgb}{0.1,0.5,0.1}
\definecolor{purple}{rgb}{0.5,0,0.5}
\hypersetup{
  colorlinks=true,
  breaklinks=true,
  urlcolor=blue,
  citecolor=lightgreen,
  linkcolor=lightgreen,
  bookmarksopen=false,
  draft=false
}

\newtheorem{theorem}{Theorem}[section]

\newtheorem{lemma}[theorem]{Lemma}

\newtheorem{proposition}[theorem]{Proposition}
\newtheorem{remark}[theorem]{Remark}

\numberwithin{equation}{section}
\author{
Dan Luo\thanks{
Key Laboratory of NSLSCS, Ministry of Education, School of Mathematical Sciences,
Nanjing Normal University, Nanjing 210023, China
(\texttt{luodmath@163.com}, \texttt{hlsun@njnu.edu.cn}, \texttt{yymath9527@163.com}).
}
\and
Hailin Sun\footnotemark[1]
\and
Lei Yang\thanks{
School of Computer Science and Engineering, Guangdong Province Key Laboratory of Computational Science,
Sun Yat-sen University, Guangzhou 510275, China
(\texttt{yanglei39@mail.sysu.edu.cn}).
}
\and
Yang You\footnotemark[1]
}

\date{}

\title{A Single-Loop Minorized Dual Decomposition Method for Nonsmooth Multi-Stage Stochastic Programming}
\usepackage[top=1in, bottom=1in, left=1.15in, right=1.15in]{geometry}

\begin{document}
\maketitle

\begin{abstract}
In this paper, we study multi-stage stochastic programming (MSP) problems with nonsmooth composite objectives. Tailored to their intrinsic stage-wise and scenario-wise structure, we develop a single-loop minorized dual decomposition method, in which each iteration constructs a minorized problem and its restricted Wolfe dual, and then performs \textit{one iteration} of the symmetric Gauss--Seidel based inexact alternating direction method of multipliers on the resulting dual problem to generate the next iterate. A key feature of the proposed optimization framework is that the resulting updates preserve the stage-wise and scenario-wise decomposable structure of the MSP problem and are suitable for parallel implementation. We establish global convergence of the generated iterates for the three-stage case and further establish the corresponding global convergence theorem for the general multi-stage setting. Numerical experiments illustrate the computational viability of the proposed framework and its favorable scaling behavior with respect to the stage-wise and scenario-wise structure.
\end{abstract}

\textbf{Keywords:}\quad Multi-stage stochastic programming; nonsmooth composite optimization; single-loop minorized dual decomposition; symmetric Gauss--Seidel method; parallel computation.  \newline

\section{Introduction}
Multi-stage stochastic programming (MSP) \cite{shapiro2021lectures} provides a fundamental framework for sequential decision-making under uncertainty. It has been widely used in supply chain management \cite{khalilabadi2020multi,stranieri2024combining}, financial risk control \cite{mulvey2004financial,nickel2012multi}, energy system planning \cite{pereira1991multi}, and related areas. In this paper, we consider a class of nonsmooth composite MSP problems of the following form:
\begin{equation}\label{MSP}
\begin{aligned} 
\textstyle \min\limits_{x_1} ~ &  \theta_1(x_1)+h_1(x_1)+\frac{1}{N_2} \sum\limits_{k_2=1}^{N_2}\Big(
\min\limits_{x_2(\xi_{[2]}^{k_{[2]}})} \theta_{2,k_{[2]}}(x_2(\xi_{[2]}^{k_{[2]}}),\xi_2 ^{k_2}) 
+ h_{2,k_{[2]}}(x_2(\xi_{[2]}^{k_{[2]}}),\xi_2 ^{k_2}) \Big.   \\
&\Big.  +\cdots+\frac{1}{N_T} \sum\limits_{k_T=1}^{N_T} \Big( \min\limits_{x_T( \xi_{[T]}^{k_{[T]}})} \theta_{T, k_{[T]}}(x_T( \xi_{[T]}^{k_{[T]}}), \xi_{T}^{k_{T}}) + h_{T,k_{[T]}}(x_T( \xi_{[T]}^{k_{[T]}}), \xi_{T}^{k_{T}})\Big)\Big)  \vspace{-2mm} \\
\mbox{s.t.}~
&A_1x_1= b_1, \\
&B_t(\xi_{[t]}^{k_{[t]}})x_{t-1}(\xi_{[t-1]}^{k_{[t-1]}}) + A_t(\xi_{[t]}^{k_{[t]}})x_t(\xi_{[t]}^{k_{[t]}})= b_t(\xi_{[t]}^{k_{[t]}}),
~~t=2,\cdots,T,\\
&x_1 \in \mathcal{K}_1 \subset \mathbb{R}^d, 
~~x_t(\xi_{[t]}^{k_{[t]}})\in \mathcal{K}_{t, k_{[t]}} \subset \mathbb{R}^d,
~~t=2,\cdots,T. 
\end{aligned}  \vspace{-1mm}
\end{equation}
The detailed structure of problem \eqref{MSP} is as follows:   \vspace{1mm}
\begin{itemize}[leftmargin=0.65cm]
\item \textbf{Stochastic process}: This problem involves a discrete stochastic process $\big\{\xi_1, \xi_2$, $\cdots, \xi_T\big\},$ where $\xi_1$ is deterministic. We denote a scenario path up to stage $t$ as $\xi_{[t]}^{k_{[t]}} = \big(\xi_2^{k_2}, \cdots, \xi_t^{k_t}\big)$, where $k_{[t]} = \{k_2, \cdots, k_t\}$ and $k_s \in\{ 1, \cdots, N_s\}$ for $s=2, \cdots, t$.  For $t=2,\cdots,T$, given a realization $\xi_{[t-1]}^{k_{[t-1]}}$, the stage-$t$ random variable $\xi_t$ follows a discrete distribution with a support set $\big\{\xi_t^{k_{[t-1]},1},\cdots,  \xi_t^{k_{[t-1]},N_t}\big\}$ and conditional probability $P\big(\xi_t = \xi_t^{k_{[t]}} \mid \xi_{[t-1]}^{k_{[t-1]}}\big) = \frac{1}{N_t}$.
\vspace{1mm}
\item {\bf Functions, operators, sets and vectors}: 
The functions
 $\theta_1$ and $\theta_{t, k_{[t]}}$ are closed proper convex, while $h_1$ and $h_{t, k_{[t]}}$ are continuously differentiable convex functions with Lipschitz continuous gradients for $t=2,\cdots,T$.  The constraints involve linear operators 
$A_1,A_t(\xi_{[t]}^{k_{[t]}}),B_t(\xi_{[t]}^{k_{[t]}}):\mathbb{R}^d\to\mathbb{R}^{\ell}$, the closed convex sets $\mathcal{K}_1$ and $\mathcal{K}_{t,k_{[t]}}$,  
and vectors $b_1$ and $b_t(\xi_{[t]}^{k_{[t]}})$, $t=2,\cdots,T$. 
\end{itemize}
\vspace{1mm}

Problem \eqref{MSP} exhibits an intrinsic stage-wise and scenario-wise structure. The interstage linear constraints couple decisions between consecutive stages along each scenario path, whereas the objective functions and individual set constraints are separable across stages and scenarios. This coexistence of interstage coupling and scenario-wise decomposability is the main computational structure exploited in this paper. Moreover, the nonsmooth composite terms motivate decomposition methods that can preserve and utilize this stochastic multi-stage structure.

Research on MSP has attracted considerable attention, and substantial progress has been made in both mathematical theory and solution methodologies \cite{shapiro2021lectures}. A variety of classical solution approaches have been developed for MSP problems. For example, the branch-and-bound method \cite{norkin1998branch} employs randomized upper and lower bound estimates to guide the partitioning procedure; the cutting-plane method \cite{guan2009cutting} generates cutting planes for multi-stage stochastic integer programs by using inequalities valid for individual scenarios; and the progressive hedging algorithm \cite{rockafellar1991scenarios} coordinates scenario subproblems through a decomposition framework to enforce nonanticipativity.
Stochastic dual dynamic programming (SDDP), initiated by Pereira and Pinto \cite{pereira1991multi}, is another important class of methods for MSP. SDDP-type methods combine dynamic programming with cutting-plane approximations and have been extended to MSP problems with increasingly complex structural features \cite{guigues2020inexact,guigues2021inexact,zou2019stochastic}. 
For problems with high-dimensional decision variables and a relatively small number of stages, dynamic stochastic approximation methods \cite{lan2024numerical,lan2021dynamic} provide another effective approach by combining saddle-point reformulations with inexact primal-dual stochastic approximation schemes. 

Splitting, multiplier-updating, and primal-dual techniques have played an important role in large-scale structured convex optimization. Representative developments include the sequential Lagrange multiplier updating scheme for separable convex programming \cite{dai2017sequential}, the linearly convergent majorized alternating direction method of multipliers (ADMM) with indefinite proximal terms for convex composite programming \cite{zhang2020linearly}, and approximate first-order primal-dual algorithms for saddle point problems \cite{jiang2021approximate}. Symmetric Gauss--Seidel (sGS) decomposition techniques have also been incorporated into ADMM-type frameworks for multi-block convex composite optimization \cite{chen2017efficient,chen2019unified,li2018qsdpnal} and have been applied or extended in various structured optimization settings \cite{ding2023proximal,jin2025dual,lam2021semi,yang2021fast}. These works provide useful computational and analytical tools for exploiting separability and handling linear coupling in large-scale structured optimization, and motivate decomposition methods for problems with coupled multi-block structures.

Despite these developments, structure-exploiting splitting techniques for nonsmooth composite MSP remain relatively limited.  In the two-stage setting, Arp{\'o}n et al.~\cite{arpon2020admm} applied the three-block ADMM \cite{sun2015convergent} to solve two-stage stochastic programming. For the multi-stage setting considered in this paper, a natural approach is to apply the sGS inexact ADMM (sGS-iADMM) framework \cite{chen2017efficient} directly to solve a majorized formulation of problem \eqref{MSP} by introducing suitable auxiliary variables. However, this approach requires many additional auxiliary constraints, which substantially increase both the number of variables and the number of constraints and may therefore lead to significant computational overhead. To address this issue, we develop a \underline{s}ingle-\underline{l}oop \textit{\underline{m}inorized} \underline{d}ual \underline{d}ecomposition (SL-MDD) method tailored to the intrinsic stage-wise and scenario-wise structure of MSP. Specifically, motivated by the majorization treatment of the primal problem, we introduce a corresponding minorization scheme on the dual problem.

The main contributions of this paper are summarized as follows:
\begin{itemize}[leftmargin=0.65cm]
 \item We develop the SL-MDD method for solving the MSP problem \eqref{MSP}. Starting from the intrinsic stage-wise and scenario-wise structure of deterministic-equivalent nonsmooth MSP, we construct at each iteration a minorized problem, derive its restricted Wolfe dual, and then perform one sGS-iADMM sweep to update the next iterate; the resulting single-loop computational flow is illustrated in Figure~\ref{fig:flowchart} (in Section 2.1). This decomposition route leads to a computationally favorable block structure: most updates reduce to structured linear-system solves, proximal mappings, or projections, while the remaining coupled $y$-blocks are further decoupled through Lemma~\ref{lem:key}, thereby yielding efficiently computable and stage/scenario-wise parallelizable subproblems.

  \item For convergence analysis, we show that the proposed SL-MDD method admits an equivalent single-loop minorized dual inexact semi-proximal ADMM (SL-MD-isPADMM) representation after a suitable grouping of the dual variables. This equivalent representation, derived via the sGS decomposition theorem \cite[Theorem 1]{li2019block}, is used only as an analytical device rather than as the conceptual origin of the algorithm. Based on this representation, we establish global convergence for the three-stage case and then carry out the same proof pattern to obtain the corresponding global convergence theorem for the general multi-stage setting.
 \item Numerical experiments illustrate the computational viability of the proposed framework for MSP problems with a moderate number of stages and relatively large per-stage dimensions. The results further demonstrate favorable scaling behavior with respect to the stage-wise and scenario-wise structure.

\end{itemize}

The rest of this paper is organized as follows. In Section \ref{sec:alg}, we present the proposed SL-MDD method and discuss the stage-wise and scenario-wise computational structure of its updates. In Section \ref{sec:convergence}, we establish the global convergence of the generated iterates. Numerical experiments are reported in Section \ref{sec:ne} to illustrate the computational viability and scalability of the proposed framework. Finally, Section \ref{sec:conclusions} concludes the paper.

\textbf{Notation.} 
Let $G: \mathbb{R}^d \to \mathbb{R}^d$ be a self-adjoint positive semidefinite linear operator. For any vector $u\in\mathbb{R}^d$, its $G$-norm is defined by $\|u\|_G := \sqrt{\langle u, Gu \rangle}$. For a proper closed convex function $f:\mathbb{R}^d\to\mathbb{R}\cup\{+\infty\}$, its Fenchel conjugate is defined by $f^*(u) := \sup_{v}\left\{ \langle u, v \rangle - f(v)\right\}$, and its proximal mapping is defined by $\operatorname{Prox}_{f}(u) := \operatorname*{arg\,min}_{v} \left\{ f(v) + \frac{1}{2} \|v - u\|^2 \right\}$. When the quadratic term is measured in the $G$-norm with $G \succ 0$, we write $\text{Prox}^{f}_G(u) := \arg\min_{v} \left\{ f(v) + \frac{1}{2} \|v - u\|_G^2 \right\}$. Let $\mathcal{H}$ be a self-adjoint block operator and write $\mathcal{H}=\mathcal{H}_d+\mathcal{H}_u+\mathcal{H}_u^*$, where $\mathcal{H}_d$ and $\mathcal{H}_u$ denote the block-diagonal and strictly block upper-triangular parts of $\mathcal{H}$, respectively. Assume that $\mathcal{H}_d\succ0$. The sGS operator associated with $\mathcal{H}$ is defined by $\mathrm{sGS}(\mathcal{H}):=\mathcal{H}_u\,\mathcal{H}_d^{-1}\,\mathcal{H}_u^*$.

\section{The single-loop minorized dual decomposition method}
\label{sec:alg}

In this section, we present the proposed SL-MDD method for problem \eqref{MSP}. For ease of exposition, we mainly focus on the three-stage stochastic programming (TSP). The same construction and analysis extend directly to the general multi-stage setting.

\subsection{Construction for the three-stage case}
\label{sec:alg-des}
Even in the three-stage setting, the structure of MSP problems is still complex.  For simplicity, let $x_{2,i}:=x_2(\xi_2^i)$, $x_{3,ij}:=x_3(\xi_2^i,\xi_3^j)$, $A_{2,i}:=A_2(\xi_2^i)$, $A_{3,ij}:=A_3(\xi_2^i,\xi_3^j)$, $B_{2,i}:=B_2(\xi_2^i)$, $B_{3,ij}:=B_3(\xi_2^i,\xi_3^j)$,
$b_{2,i}:=b_2(\xi_2^i)$,  and $b_{3,ij}:=b_3(\xi_2^i,\xi_3^j)$ for $i=1,\cdots,N_2$ and $j=1,\cdots,N_3$. 
We also introduce the following notation for the Cartesian products:
\begin{align*}
\mathcal{K}_2 := \mathcal{K}_{2,1} \times \mathcal{K}_{2,2} \times \cdots \times \mathcal{K}_{2,N_2}, 
\quad
\mathcal{K}_3 := \mathcal{K}_{3,11} \times \mathcal{K}_{3,12} \times \cdots \times \mathcal{K}_{3,N_2N_3}. 
\end{align*}
Moreover, the vectors $x_2$, $x_3$, $b_2$ and $b_3$ are defined as follows:
\begin{align*}
x_2:=\begin{pmatrix}
x_{2,1}\\
\vdots\\
x_{2,N_2}
\end{pmatrix}, 
~x_3:=\begin{pmatrix}
x_{3,11}\\
\vdots\\
x_{3,N_2N_3}
\end{pmatrix}, 
~b_2:=\begin{pmatrix}
b_{2,1}\\
\vdots\\
b_{2,N_2}
\end{pmatrix},
~b_3:=\begin{pmatrix}
b_{3,11}\\
\vdots\\
b_{3,N_2N_3}
\end{pmatrix}.
\end{align*}
The matrices $A_2$ and $A_3$ are defined as block diagonal matrices:
\begin{equation*}
A_2:=\begin{pmatrix}
A_{2,1} & & \\
 & \ddots & \\
 & & A_{2,N_2}
\end{pmatrix},
\quad
A_3:=\begin{pmatrix}
A_{3,11} & & \\
 & \ddots & \\
 & & A_{3,N_2N_3}
\end{pmatrix}.
\end{equation*}
The matrices $B_2$ and $B_3$ are defined as follows:
\begin{equation*}
B_2 := \begin{pmatrix}
B_{2,1}\\
\vdots\\
B_{2,N_2}
\end{pmatrix},~
B_3 := \begin{pmatrix}
B_{3,1} & & \\
& \ddots & \\
& & B_{3,N_2}
\end{pmatrix}
~\text{with}~ 
B_{3,i}:=\begin{pmatrix}
B_{3,i1} \\
\vdots \\
B_{3,iN_3}
\end{pmatrix},i=1,\cdots,N_2.
\end{equation*}
Finally, the functions $\theta_2$, $\theta_3$, $h_2$ and $h_3$ are defined as:
\begin{equation}\label{theta&h}
	\begin{aligned}
		&\theta_2(x_2):=\frac{1}{N_2}\sum_{i=1}^{N_2}\theta_{2,i}(x_{2,i}), ~~\theta_3(x_3):=\frac{1}{N_2N_3}\sum_{i=1}^{N_2}\sum_{j=1}^{N_3}\theta_{3,ij}(x_{3,ij}),\\[3pt]
		&h_2(x_2):=\frac{1}{N_2}\sum_{i=1}^{N_2}h_{2,i}(x_{2,i}), ~~h_3(x_3):=\frac{1}{N_2N_3}\sum_{i=1}^{N_2}\sum_{j=1}^{N_3}h_{3,ij}(x_{3,ij}).
	\end{aligned}
\end{equation}
Using the above notations, we can reformulate the TSP problem (i.e., the three-stage case of \eqref{MSP}) as the following compact convex composite optimization problem:
\begin{equation}\label{P1}
\begin{aligned}
\min\limits_{x_t} \quad & {\sum_{t=1}^3}(\theta_t(x_t)+ \delta_{\mathcal{K}_t}(x_t)+h_t(x_t))\\
\mbox{s.t.}\quad 
&A_1x_1= b_1,\\
&B_2x_1 + A_2x_2= b_2,\\
&B_3x_2 + A_3x_3= b_3, 
\end{aligned}
\end{equation}
where $\delta_{\mathcal{K}_t}$ denotes the indicator function of $\mathcal{K}_t$, i.e., $\delta_{\mathcal{K}_t}(x)=0$ if $x\in\mathcal{K}_t$ and $\delta_{\mathcal{K}_t}(x)=\infty$ otherwise. To handle the nonsmooth terms in the objective function, we introduce auxiliary variables $u_t, s_t$ ($t=1,2,3$) and further reformulate the problem as:
\begin{equation}\label{P2}
\begin{aligned}
\min\limits_{x_{t}, u_{t}, s_{t}} 
\quad &  \sum_{t=1}^3(\theta_t(u_t)+ \delta_{\mathcal{K}_t}(s_t)+h_t(x_t))\\
\mbox{s.t.}\quad \,\,
&A_1x_1 = b_1, \\
&B_2x_1 + A_2x_2 = b_2,\\ 
&B_3x_2 + A_3x_3 = b_3, \\
&u_t=x_t,~~s_t=x_t,~~t=1,2,3.
\end{aligned}
\end{equation} 
To solve this problem, we adopt a minorization strategy to construct quadratic \emph{lower} bounds for the functions $h_1,h_2,h_3$. Specifically, choose self-adjoint positive semidefinite linear operators
$Q_1:\mathbb{R}^d\to\mathbb{R}^d$,
$Q_2:\mathbb{R}^{dN_2}\to\mathbb{R}^{dN_2}$, and
$Q_3:\mathbb{R}^{dN_2N_3}\to\mathbb{R}^{dN_2N_3}$
such that the following quadratic lower-bound condition holds: for any
$x_1,x_1'\in\mathbb{R}^d$, $x_2,x_2'\in\mathbb{R}^{dN_2}$, and
$x_3,x_3'\in\mathbb{R}^{dN_2N_3}$,
\begin{equation}\label{tilde_h}
h_t(x_t) \textstyle\geq \tilde{h}_t(x_t;x_t'):= h_t(x_t')+\langle \nabla h_t(x_t'),x_t-x_t'\rangle+\frac{1}{2}\|x_t-x_t'\|_{Q_t}^2,\quad t=1,2,3.
\end{equation}
Using this fact, we can construct a minorized problem of \eqref{P2} at points $x_1',x_2',x_3'$:
\begin{equation}\label{P3}
\begin{aligned} 
\min\limits_{x_{t}, u_{t}, s_{t}} 
\quad &  \sum_{t=1}^3(\theta_t(u_t)+ \delta_{\mathcal{K}_t}(s_t)+\tilde{h}_t(x_t;x_t'))\\
\mbox{s.t.}\quad \,\,
&A_1x_1= b_1, \\
&B_2x_1 + A_2x_2= b_2,\\ 
&B_3x_2 + A_3x_3= b_3, \\
&u_t=x_t,~~s_t=x_t,~~t=1,2,3.
\end{aligned}
\end{equation}
Let $x_{1:t}:=(x_1,\cdots, x_t)$, $u_{1:t}:=(u_1,\cdots,u_t)$, $s_{1:t}:=(s_1,\cdots, s_t)$, $y_{1:t}:=(y_1,\cdots,y_t)$, $v_{1:t}:=(v_1,\cdots,v_t)$, $z_{1:t}:=(z_1,\cdots, z_t)$ and $x'_{1:t}:=(x'_1,\cdots, x'_t)$ for $t=1,2,3$. The Lagrangian function for the minorized problem \eqref{P3} is given by
\begin{equation*}
\begin{aligned}
&L^{x'_{1:3}}(x_{1:3},u_{1:3},s_{1:3};y_{1:3},v_{1:3},z_{1:3})\\
:=~&\sum_{t=1}^3\Big(\frac{1}{2}\langle x_t, \,Q_t x_t \rangle +\langle c_t(x'_t)-A_t^{*}y_t-B_{t+1}^{*}y_{t+1}-v_t-z_t, \,x_t \rangle +\theta_t(u_t)+\langle v_t, u_t \rangle  \\
& + \delta_{\mathcal{K}_t}(s_t) + \langle z_t, s_t \rangle +\langle b_t, y_t \rangle +h_t(x_t')+\frac{1}{2}\langle x_t',Q_t x_t'\rangle -\langle \nabla h_t(x_t'),x_t' \rangle \Big),
\end{aligned}
\end{equation*}
where $B_4^*=0$, $y_4=0$, and $c_t(x'_t):=\nabla h_t(x'_t)-Q_t x'_t$ for $t=1,2,3$.
Moreover, for each $t=1,2,3$, we have
\begin{equation}\label{xt}
\begin{aligned}
& \quad \inf_{x_t}L^{x'_{1:3}}(x_{1:3},u_{1:3},s_{1:3};y_{1:3},v_{1:3},z_{1:3}) \\
&= \inf_{x_t}\Big\{ \frac{1}{2}\langle x_t, Q_t x_t\rangle+\langle c_t(x'_t)-A_t^{*}y_t-B_{t+1}^{*}y_{t+1}-v_t-z_t, \,x_t\rangle\Big\} \\
&=\begin{cases}
-\frac{1}{2}\langle w_t, Q_t w_t \rangle, &\text{if}~~c_t(x'_t)-A_t^{*}y_t-B_{t+1}^{*}y_{t+1}-v_t-z_t=-Q_t w_t, \\[1pt]
-\infty, &\text{otherwise,}\\
\end{cases} 
\end{aligned}
\end{equation}
and
\begin{equation*}
\begin{aligned}
& \inf_{u_t} L^{x'_{1:3}}(x_{1:3},u_{1:3},s_{1:3};y_{1:3},v_{1:3},z_{1:3}) = \inf_{u_t}\lbrace \theta_t(u_t)+\langle v_t, u_t\rangle \rbrace = -\theta_t^*(-v_t),\\[2pt]
& \inf_{s_t} L^{x'_{1:3}}(x_{1:3},u_{1:3},s_{1:3};y_{1:3},v_{1:3},z_{1:3}) = \inf_{s_t}\left\lbrace \delta_{\mathcal{K}_t}(s_t)+\langle z_t, s_t \rangle \right\rbrace = -\delta_{\mathcal{K}_t}^*(-z_t).
\end{aligned}
\end{equation*}
Let $\mathcal{W}_t:=Range(Q_t)$ denote the range space of $Q_t$ for $t=1,2,3$.
Thus, the restricted Wolfe dual problem (see, e.g., \cite{li2018qsdpnal}) of \eqref{P3} can be formulated as follows: 
\begin{equation}
\begin{aligned} \label{D1}
 \max\limits_{v_t,w_t,y_t,z_t}~&  \sum_{t=1}^3\Big(-\frac{1}{2}\langle w_t,Q_tw_t \rangle-\theta_t^*(-v_t)-\delta_{\mathcal{K}_t}^*(-z_t)+\langle b_t,y_t\rangle \\
& \quad +h_t(x_t')+\frac{1}{2}\langle x_t',Q_tx_t'\rangle-\langle \nabla h_t(x_t'),x_t'\rangle \Big)\\
\mbox{s.t.}\quad \,
&c_1(x'_1)-A_1^{*}y_1-B_2^{*}y_2-v_1-z_1=-Q_1w_1, &\\
&c_2(x'_2)-A_2^{*}y_2-B_3^{*}y_3-v_2-z_2=-Q_2w_2,\\ 
&c_3(x'_3)-A_3^{*}y_3-v_3-z_3=-Q_3w_3, \\
&w_t \in \mathcal{W}_t, \,t=1,\,2,\,3.
\end{aligned}
\end{equation}
Given $\sigma>0$, the augmented Lagrangian function of \eqref{D1} is
\begin{equation*}
\begin{aligned}
& \widehat{L}_{\sigma}^{x'_{1:3}}\big(v_{1:3},w_{1:3},y_{1:3},z_{1:3};x_{1:3}\big)
:= \sum_{t=1}^{3} \Big(\theta_t^*(-v_t)+\delta_{\mathcal{K}_t}^*(-z_t)+\frac{1}{2}\langle w_t,Q_tw_t\rangle   \\
& \quad 
+\delta_{\mathcal{W}_t}(w_t)-\langle b_t,y_t\rangle 
+\frac{\sigma}{2}\|A_t^{*}y_t+B_{t+1}^{*}y_{t+1}+v_t+z_t-c_t(x'_t)-Q_tw_t+\sigma^{-1}x_t\|^2 \\
& \quad -h_t(x_t')-\frac{1}{2}\langle x_t',Q_tx_t'\rangle+\langle \nabla h_t(x_t'),x_t'\rangle-\frac{1}{2\sigma} \| x_t \| ^2 \Big).
\end{aligned} 
\end{equation*}
With these preparations, we are now ready to present the  SL-MDD method for solving problem \eqref{P1} in Algorithm \ref{alg:tsp-md}.
\begin{algorithm}
\small
\caption{ A SL-MDD method for the TSP problem \eqref{P1}}
\label{alg:tsp-md}
\begin{algorithmic}
\STATE{Let $\tau \in  (0,\frac{1+\sqrt{5-4\alpha}}{2} ]$ be the step-length with $\alpha \in (0,1)$, and $\{\varepsilon_k\}_{k\geq0}$ be a summable sequence of nonnegative numbers. Choose an initial point $\varpi^0=(v^0_{1:3},w^0_{1:3},y^0_{1:3},z^0_{1:3},x^0_{1:3})$}. Set $k = 0$. 
\WHILE{the termination criterion is not met,}
\STATE{\textbf{Step 1:} Construct the minorized problem \eqref{P3} at the point $x^k_{1:3}$}.
\STATE{\textbf{Step 2:} Derive the restricted Wolfe dual problem \eqref{D1}.}
\STATE{\textbf{Step 3:} Call one sGS-iADMM iteration to solve \eqref{D1}:}
\STATE{\hspace{\algorithmicindent} \textbf{Step 3.1a:} Compute \vspace{-1mm}
     \begin{align}
       & \textstyle \tilde{w}_1^{k+1} \approx \arg\min_{w_1} \lbrace
      \widehat{L}_{\sigma}^{x^k_{1:3}} ( v_{1:3}^k, ( w_1,w_2^k, w_3^k ), y_{1:3}^k, z_{1:3}^k; x_{1:3}^k )  \rbrace , \label{eq:3-1-a-1}\\
       & \textstyle \tilde{w}_2^{k+1} \approx \arg\min_{w_2} \lbrace
      \widehat{L}_{\sigma}^{x^k_{1:3}} ( v_{1:3}^k, ( \tilde{w}_1^{k+1},w_2,w_3^k ), y_{1:3}^k, z_{1:3}^k; x_{1:3}^k )  \rbrace , \label{eq:3-1-a-2}\\
      & \textstyle \tilde{w}_3^{k+1} \approx \arg\min_{w_3} \lbrace
      \widehat{L}_{\sigma}^{x^k_{1:3}} ( v_{1:3}^k, ( \tilde{w}_1^{k+1},\tilde{w}_2^{k+1},w_3 ), y_{1:3}^k, z_{1:3}^k; x_{1:3}^k )  \rbrace. \label{eq:3-1-a-3}
     \end{align}
}
\vspace{-3ex}
\STATE{\hspace{\algorithmicindent} \textbf {Step 3.1b:} Compute \vspace{-1mm}
     \begin{align}
       & \textstyle v_1^{k+1} \approx \arg\min_{v_1}  \lbrace \widehat{L}_{\sigma}^{x^k_{1:3}} (  ( v_1, v_2^k, v_3^k  ), \tilde{w}_{1:3}^{k+1}, y_{1:3}^k, z_{1:3}^k; x_{1:3}^k  )  \rbrace,\label{eq:3-1-b-1}\\
       & \textstyle v_2^{k+1} \approx \arg\min_{v_2}  \lbrace \widehat{L}_{\sigma}^{x^k_{1:3}} (  ( v_1^{k+1}, v_2, v_3^k  ), \tilde{w}_{1:3}^{k+1}, y_{1:3}^k, z_{1:3}^k; x_{1:3}^k  )  \rbrace,\label{eq:3-1-b-2}\\
       & \textstyle v_3^{k+1} \approx \arg\min_{v_3}  \lbrace \widehat{L}_{\sigma}^{x^k_{1:3}} (  ( v_1^{k+1}, v_2^{k+1}, v_3  ), \tilde{w}_{1:3}^{k+1}, y_{1:3}^k, z_{1:3}^k; x_{1:3}^k  )   \rbrace. \label{eq:3-1-b-3}
     \end{align}
}
\vspace{-3ex}
\STATE{\hspace{\algorithmicindent} \textbf {Step 3.1c:} Compute \vspace{-1mm}
  \begin{equation*}
     \begin{aligned}
        &\textstyle w_3^{k+1} \approx  \arg\min_{w_3} \lbrace \widehat{L}_{\sigma}^{x_{1:3}^k} ( v_{1:3}^{k+1},  ( \tilde{w}_1^{k+1}, \tilde{w}_2^{k+1}, w_3  ), y_{1:3}^k, z_{1:3}^k; x_{1:3}^k  ) \rbrace,\\
        &\textstyle w_2^{k+1} \approx  \arg\min_{w_2} \lbrace \widehat{L}_{\sigma}^{x_{1:3}^k} ( v_{1:3}^{k+1},  ( \tilde{w}_1^{k+1}, w_2, w_3^{k+1}  ), y_{1:3}^k, z_{1:3}^k; x_{1:3}^k  ) \rbrace,\\
       &\textstyle w_1^{k+1} \approx  \arg\min_{w_1} \lbrace \widehat{L}_{\sigma}^{x_{1:3}^k} ( v_{1:3}^{k+1},  ( w_1, w_2^{k+1}, w_3^{k+1}  ), y_{1:3}^k, z_{1:3}^k; x_{1:3}^k  ) \rbrace.
     \end{aligned}
  \end{equation*} }
\vspace{-3ex}
\STATE{\hspace{\algorithmicindent}  \textbf {Step 3.2a:} Compute \vspace{-1mm}
\begin{align}
        &\textstyle\tilde{y}_1^{k+1} \approx \arg\min_{y_1} \{ \widehat{L}_{\sigma}^{x_{1:3}^k} ( v_{1:3}^{k+1}, w_{1:3}^{k+1},  ( y_1, y_2^k, y_3^k  ), z_{1:3}^k; x_{1:3}^k  ) \}, \label{eq:3-2-a-1} \\
       &\textstyle\tilde{y}_2^{k+1} \approx \arg\min_{y_2} \{ \widehat{L}_{\sigma}^{x_{1:3}^k} ( v_{1:3}^{k+1}, w_{1:3}^{k+1},  ( \tilde{y}_1^{k+1}, y_2 , y_3^k  ), z_{1:3}^k; x_{1:3}^k  ) \}, \label{eq:3-2-a-2} \\
      &\textstyle\tilde{y}_3^{k+1} \approx \arg\min_{y_3} \{ \widehat{L}_{\sigma}^{x_{1:3}^k} ( v_{1:3}^{k+1}, w_{1:3}^{k+1},  ( \tilde{y}_1^{k+1}, \tilde{y}_2^{k+1}, y_3  ), z_{1:3}^k; x_{1:3}^k  ) \}.\label{eq:3-2-a-3}
\end{align}
 }
\vspace{-3ex}
\STATE{ \hspace{\algorithmicindent} \textbf {Step 3.2b:} Compute \vspace{-1mm}
      \begin{align}
          &\textstyle z_1^{k+1} \approx \arg\min_{z_1}  \lbrace \widehat{L}_{\sigma}^{x_{1:3}^k} ( v_{1:3}^{k+1}, w_{1:3}^{k+1}, \tilde{y}_{1:3}^{k+1}, ( z_1, z_2^k, z_3^k ); x_{1:3}^k  )  \rbrace, \label{eq:3-2-b-1}\\
          &\textstyle z_2^{k+1} \approx \arg\min_{z_2}  \lbrace \widehat{L}_{\sigma}^{x_{1:3}^k} ( v_{1:3}^{k+1}, w_{1:3}^{k+1}, \tilde{y}_{1:3}^{k+1}, ( z_1^{k+1}, z_2, z_3^k ); x_{1:3}^k  )  \rbrace, \label{eq:3-2-b-2}\\
         &\textstyle z_3^{k+1} \approx \arg\min_{z_3}  \lbrace \widehat{L}_{\sigma}^{x_{1:3}^k} ( v_{1:3}^{k+1}, w_{1:3}^{k+1}, \tilde{y}_{1:3}^{k+1}, ( z_1^{k+1}, z_2^{k+1}, z_3 ); x_{1:3}^k  )  \rbrace.\label{eq:3-2-b-3}
      \end{align} }
\vspace{-3ex}
\STATE{\hspace{\algorithmicindent}  \textbf {Step 3.2c:} Compute \vspace{-1mm}
 \begin{equation*}
       \begin{aligned}
           &\textstyle  y_3^{k+1} \approx \arg\min_{y_3}  \{ \widehat{L}_{\sigma}^{x_{1:3}^k} ( v_{1:3}^{k+1}, w_{1:3}^{k+1},  ( \tilde{y}_1^{k+1}, \tilde{y}_2^{k+1}, y_3  ), z_{1:3}^{k+1}; x_{1:3}^k  )  \},\\  
          &\textstyle  y_2^{k+1} \approx \arg\min_{y_2}  \{ \widehat{L}_{\sigma}^{x_{1:3}^k} ( v_{1:3}^{k+1}, w_{1:3}^{k+1},  ( \tilde{y}_1^{k+1}, y_2, y_3^{k+1}  ), z_{1:3}^{k+1}; x_{1:3}^k  )  \},\\  
         &\textstyle  y_1^{k+1} \approx \arg\min_{y_1}  \{ \widehat{L}_{\sigma}^{x_{1:3}^k} ( v_{1:3}^{k+1}, w_{1:3}^{k+1},  ( y_1, y_2^{k+1}, y_3^{k+1}  ), z_{1:3}^{k+1}; x_{1:3}^k  )  \}.
       \end{aligned}
   \end{equation*} }
\vspace{-3ex}
\STATE{ \hspace{\algorithmicindent} \textbf {Step 3.3:} Compute 
\vspace{-1mm} 
\begin{equation*}
  \begin{aligned}
     &\textstyle x_1^{k+1} = x_1^k + \tau\sigma \left( A_1^*y_1^{k+1} + B_2^*y_2^{k+1} + z_1^{k+1} + v_1^{k+1}-c_1(x_1^k) -Q_1w_1^{k+1}  \right),\\
     &\textstyle x_2^{k+1} = x_2^k + \tau\sigma \left(A_2^*y_2^{k+1} + B_3^*y_3^{k+1} + z_2^{k+1} + v_2^{k+1}-c_2(x_2^k) -Q_2w_2^{k+1}  \right),\\
     &\textstyle x_3^{k+1} = x_3^k + \tau\sigma \left(A_3^*y_3^{k+1} + z_3^{k+1} + v_3^{k+1}-c_3(x_3^k) -Q_3w_3^{k+1} \right).
  \end{aligned}  
  \end{equation*}}
\ENDWHILE
\end{algorithmic}
\end{algorithm}

In Algorithm \ref{alg:tsp-md}, at the $k$-th iteration, we first construct a minorized problem \eqref{P3} at the point $x_{1:3}^k$ (\textbf{Step 1}), and derive its restricted Wolfe dual problem \eqref{D1} (\textbf{Step 2}). We then perform one sGS-iADMM iteration to approximately solve \eqref{D1} (\textbf{Step 3}). The updated $x_{1:3}^{k+1}$ is then used to construct the next minorized problem \eqref{P3} and its corresponding restricted Wolfe dual problem \eqref{D1}. Repeating this procedure produces a sequence of iterates that progressively approximates an optimal solution of the original problem \eqref{P2} (and hence \eqref{P1}).  The overall process is illustrated in the flowchart in Figure \ref{fig:flowchart}.

\begin{figure}[htbp]
\centering
\includegraphics[width=0.8\textwidth]{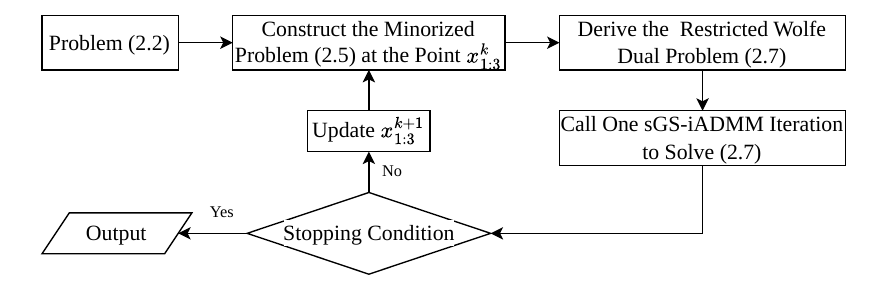}
\setlength{\abovecaptionskip}{-1pt}  
\caption{Flowchart for Solving Problem \eqref{P1}.}
\label{fig:flowchart}
\end{figure}

\begin{remark}
In Algorithm \ref{alg:tsp-md}, the symbol ``$\approx$'' indicates that the
corresponding block subproblem is solved \textit{inexactly}. Specifically, for example, the update
in \eqref{eq:3-1-a-1} means that $\tilde{w}_1^{k+1}$ is computed such that there
exists an error vector $d_{\tilde{w}_1}^k$ satisfying
\begin{equation*}
d_{\tilde{w}_1}^k \in
\partial_{\tilde{w}_1}\widehat{L}_{\sigma}^{x_{1:3}^k}
\left(v_{1:3}^k,(\tilde{w}_1^{k+1},w_2^k,w_3^k),y_{1:3}^k,z_{1:3}^k;x_{1:3}^k\right) 
\quad \mathrm{with} \quad
\|d_{\tilde{w}_1}^k\|\leq \varepsilon_k,
\end{equation*}
where $\varepsilon_k$ is a prescribed tolerance parameter. The same interpretation applies to the other subproblems. Notably, by allowing inexact minimization of the subproblems, Algorithm \ref{alg:tsp-md} provides a flexible iterative framework, which accommodates both subproblems with closed-form solutions and subproblems that must be solved only approximately by iterative methods.
\end{remark}

\subsection{Computational structure and parallel implementation of the SL-MDD method}
\label{sec:clo}
We next discuss the computational structure of one iteration of Algorithm~\ref{alg:tsp-md}. The main point is to show how the block updates can be carried out by exploiting the stage-wise and scenario-wise structure of the TSP problem. We first establish a key lemma that will be used to handle the coupled $y$-updates.

\begin{lemma}\label{lem:key}
Assume that each $V_i$ is invertible, and consider  
\begin{equation}\label{A1}
 U_i\sum_{\mu=1}^{N}U_\mu^*y_\mu + V_iy_i + c_i = 0,
\end{equation} 
where $U_i$, $V_i$, and $c_i$ $(i=1,\cdots,N)$ are given. Then, the solution for $y_i$ is given by $y_i=-V_i^{-1}(U_i\bar{b}+c_i),$ where $\bar{b}:=-\big(I+\sum_{\mu=1}^{N}U_\mu^*V_\mu^{-1}U_\mu\big)^{-1}\big(\sum_{\mu=1}^{N}U_\mu^*V_\mu^{-1}c_\mu\big)$ with $I$ being the identity matrix. 
\end{lemma}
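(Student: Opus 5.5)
The plan is to introduce the aggregated quantity $b:=\sum_{\mu=1}^{N}U_\mu^*y_\mu$, which is what couples the $N$ equations in \eqref{A1}. Once $b$ is fixed, each equation in \eqref{A1} decouples and reads $U_i b + V_i y_i + c_i = 0$. Since $V_i$ is invertible, this forces $y_i = -V_i^{-1}(U_i b + c_i)$ for every $i$. The whole problem therefore reduces to determining the single vector $b$, and I will show that it must equal $\bar b$.

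To find $b$, I will substitute the expression for $y_i$ back into the definition of $b$:
\begin{equation*}
b=\sum_{\mu=1}^{N}U_\mu^*y_\mu=-\sum_{\mu=1}^{N}U_\mu^*V_\mu^{-1}U_\mu\, b-\sum_{\mu=1}^{N}U_\mu^*V_\mu^{-1}c_\mu .
\end{equation*}
Rearranging gives the linear equation
\begin{equation*}
\Big(I+\sum_{\mu=1}^{N}U_\mu^*V_\mu^{-1}U_\mu\Big)b=-\sum_{\mu=1}^{N}U_\mu^*V_\mu^{-1}c_\mu ,
\end{equation*}
and inverting the operator on the left yields $b=\bar b$. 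This establishes necessity: any solution of \eqref{A1} has the stated form.

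For sufficiency, I will reverse the argument. Define $y_i:=-V_i^{-1}(U_i\bar b+c_i)$ and compute $\sum_\mu U_\mu^*y_\mu$. By the defining equation of $\bar b$, this sum equals $\bar b$. Plugging it into \eqref{A1} then gives $U_i\bar b-(U_i\bar b+c_i)+c_i=0$, so the proposed $y_i$ indeed solve the system. Together with necessity, this also shows that the solution is unique.

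The only genuine obstacle is justifying that $I+\sum_\mu U_\mu^*V_\mu^{-1}U_\mu$ is invertible, since the lemma tacitly presupposes it by writing the inverse. I would handle this as follows. In the intended application, the $V_i$ arising from the $y$-subproblems in Steps 3.2a and 3.2c are self-adjoint positive definite (scaled operators of the form $\sigma A A^*$ plus a proximal term). Then each $U_\mu^*V_\mu^{-1}U_\mu$ is positive semidefinite, so $I+\sum_\mu U_\mu^*V_\mu^{-1}U_\mu\succeq I\succ0$ is invertible. In full generality, where $V_i$ is merely invertible, I will simply state the invertibility of this operator as part of the hypothesis implicit in the definition of $\bar b$. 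I will also note that this inverse acts on the low-dimensional space containing $b$, which is what makes the lemma computationally useful.
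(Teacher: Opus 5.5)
Your proposal is correct and is essentially the paper's argument: the paper multiplies each equation by $U_\mu^*V_\mu^{-1}$ and sums over $\mu$. This gives the same linear system $\bigl(I+\sum_{\mu}U_\mu^*V_\mu^{-1}U_\mu\bigr)\bar b=-\sum_\mu U_\mu^*V_\mu^{-1}c_\mu$ for the aggregate $\bar b=\sum_\mu U_\mu^*y_\mu$, followed by back-substitution, which is exactly your elimination. Your sufficiency check and your observation that invertibility of $I+\sum_\mu U_\mu^*V_\mu^{-1}U_\mu$ is tacitly assumed are points the paper leaves implicit; the latter holds when each $V_\mu\succ0$, e.g.\ the paper's $V_i=A_{2,i}A_{2,i}^*$ under the full-row-rank assumption, which involves no proximal term.
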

\begin{proof}
Setting $i = \mu$ in \eqref{A1} yields 
\begin{equation*}
 U_\mu\sum_{\mu=1}^{N}U_\mu^*y_\mu + V_\mu y_\mu + c_\mu = 0.
\end{equation*}
Multiplying both sides by $U_\mu^*V_\mu^{-1}$ and summing over $\mu=1$ to $N$ gives
\begin{equation*}
 \sum_{\mu=1}^{N}U_\mu^*V_\mu^{-1}U_\mu\sum_{\mu=1}^{N}U_\mu^*y_\mu 
+ \sum_{\mu=1}^{N}U_\mu^*y_\mu + \sum_{\mu=1}^{N}U_\mu^*V_\mu^{-1}c_\mu = 0,
\end{equation*}
which implies that
\begin{equation}\label{A2}
 \bar{b}:=\sum_{\mu=1}^{N}U_\mu^*y_\mu
= -\left(I+\sum_{\mu=1}^{N}U_\mu^*V_\mu^{-1}U_\mu\right)^{-1}
\left(\sum_{\mu=1}^{N}U_\mu^*V_\mu^{-1}c_\mu\right).
\end{equation}
Substituting \eqref{A2} into \eqref{A1} immediately yields the desired result.
\end{proof}

To apply Lemma \ref{lem:key}, we assume throughout this subsection that  $A_1$, $A_{2,i}$ and $A_{3,ij}$ $(i=1,\cdots,N_2,~j=1,\cdots,N_3)$ are of full row rank, so that
$A_1A_1^*$, $A_{2,i}A_{2,i}^*$ and $A_{3,ij}A_{3,ij}^*$ are invertible.
The computational details for each step are presented below.

\vspace{1mm}
\noindent\textbf{Steps 3.1a\,\&\,3.1c:} In Step 3.1a, 
let $R_1^k := A_1^*y_1^k + \frac{1}{N_2}\sum_{i=1}^{N_2}B_{2,i}^*y_{2,i}^k + z_1^k - c_1(x_1^k)+ \sigma^{-1}x_1^k$, $R_{2,i}^k := A_{2,i}^*y_{2,i}^k + \frac{1}{N_3}\sum_{j=1}^{N_3} B_{3, ij}^*y_{3, ij}^k + z_{2,i}^k - c_{2,i}(x_{2,i}^k) + \sigma^{-1}x_{2,i}^k$, $R_{3, ij}^k := A_{3, ij}^*y_{3, ij}^k + z_{3, ij}^k - c_{3, ij}(x_{3, ij}^k) + \sigma^{-1}x_{3, ij}^k$. Then, the update in \eqref{eq:3-1-a-1} is 
 \begin{equation*}
     \begin{aligned}
       \tilde{w}_1^{k+1} 
       =& \,\argmin_{w_1\in Range(Q_1)}\Big\{\frac{1}{2}\langle w_1,Q_1w_1\rangle+\frac{\sigma}{2} \| v_1^k-Q_1w_1+ R_1^k\|^2\Big\} \\
       =& \,\big\{ w_1\in Range(Q_1) \mid Q_1(I+\sigma Q_1)w_1=\sigma Q_1(v_1^k+R_1^k) \big\}. 
     \end{aligned}
 \end{equation*} 
Moreover, \eqref{eq:3-1-a-2} is separable with respect to (w.r.t.) $w_{2,i}$, and therefore can be computed in parallel by solving $N_2$ independent subproblems, that is, for $i=1,\cdots,N_2$,
\begin{equation*}
     \begin{aligned}
       \tilde{w}_{2,i}^{k+1} =& \,\argmin_{w_{2,i}\in Range(Q_{2,i})} \Big\{ \frac{1}{2}\langle w_{2,i},Q_{2,i}w_{2,i}\rangle+\frac{\sigma}{2} \| v_{2,i}^k-Q_{2,i}w_{2,i}+ R_{2,i}^k\| ^2 \Big\} \\
       =& \,\big\{  w_{2,i}\in Range(Q_{2,i}) \mid Q_{2,i} (I+\sigma Q_{2,i})w_{2,i}=\sigma Q_{2,i}(v_{2,i}^k+R_{2,i}^k ) \big\}. 
     \end{aligned}
 \end{equation*} 
Similarly, \eqref{eq:3-1-a-3} is separable w.r.t. $w_{3, ij}$, and therefore can be computed in parallel by solving $N_2N_3$ independent subproblems, that is, for $i=1,\cdots,N_2,~j=1,\cdots,N_3$,
\begin{equation*}
\hspace{-1mm}
     \begin{aligned}
      \tilde{w}_{3,ij}^{k+1} =& \,\argmin_{w_{3,ij}\in Range(Q_{3,ij})}\Big\{ \frac{1}{2}\inp{w_{3,ij},Q_{3,ij}w_{3,ij}}+\frac{\sigma}{2} \| v_{3,ij}^k-Q_{3,ij}w_{3,ij}+ R_{3,ij}^k\| ^2 \Big\} \\
       =& \, \big\{ w_{3,ij}\in Range(Q_{3,ij}) \mid Q_{3,ij}(I+\sigma Q_{3,ij})w_{3,ij}=\sigma Q_{3,ij}(v_{3,ij}^k+R_{3,ij}^k)  \big\}. 
     \end{aligned}
\end{equation*} 
Note that the subsequent computation requires only $Q_t \tilde{w}_t^{k+1}$ and $\langle \tilde{w}_t^{k+1},Q_t \tilde{w}_t^{k+1}\rangle$, rather than $\tilde{w}_t^{k+1}$ itself. Moreover, let $\tilde{w}_t^{k+1}$ and $\bar{w}_t^{k+1}$ solve  $Q_t (I+\sigma Q_t )w_t=\sigma Q_t (v_t^k+R_t^k )$  and $ (I+\sigma Q_t )w_t=\sigma  (v_t^k+R_t^k ) $, respectively. By \cite[Proposition 1]{lam2021semi}, it holds that $Q_t \tilde{w}_t^{k+1} = Q_t \bar{w}_t^{k+1}$ and $\langle\tilde{w}_t^{k+1},Q_t \tilde{w}_t^{k+1}\rangle  = \langle \bar{w}_t^{k+1},Q_t \bar{w}_t^{k+1}\rangle $. Therefore, the required computation reduces to solving the following linear systems:
    \begin{align*}
        & (I+\sigma Q_1)w_1=\sigma (v_1^k+R_1^k),\\
        & (I+\sigma Q_{2,i})w_{2,i}=\sigma (v_{2,i}^k+R_{2,i}^k),~~i=1,\cdots,N_2,\\
        &(I+\sigma Q_{3,ij})w_{3,ij}=\sigma (v_{3,ij}^k+R_{3,ij}^k),~~i=1,\cdots,N_2,~~j=1,\cdots,N_3.
    \end{align*}
In Step 3.1c, the updates for $w_1^{k+1}$, $w_2^{k+1}$, and $w_3^{k+1}$ are computed analogously to those in Step 3.1a. Since these linear systems are independent across scenario nodes, Steps 3.1a and 3.1c can be executed in parallel with complexity $\mathcal{O}(d^2)$.

\vspace{1mm}
\noindent\textbf{Step 3.1b:} For \eqref{eq:3-1-b-1}, applying the Moreau decomposition \cite[Theorem 14.3(ii)]{bauschke2011convex} yields
\begin{equation*}
     \begin{aligned}
       v_1^{k+1} 
       =&\, \argmin_{v_1} \left\{ \theta_1^*(-v_1)+\frac{\sigma}{2}\| v_1-Q_1\tilde{w}_1^{k+1}+R_1^k\| ^2\right\} \\
       =& \, \frac{1}{\sigma}\text{Prox}_{\sigma\theta_1}(\sigma(-Q_1\tilde{w}_1^{k+1}+R_1^k))-(-Q_1\tilde{w}_1^{k+1}+R_1^k).
     \end{aligned}
\end{equation*} 
Moreover, \eqref{eq:3-1-b-2} is separable w.r.t. $v_{2,i}$, and therefore can be computed in parallel by solving $N_2$ independent subproblems, that is, for $i=1,\cdots,N_2$,
\begin{equation*}
     \begin{aligned}
       v_{2,i}^{k+1} =& \, \argmin_{v_{2,i}}\Big\{ \theta_{2,i}^*(-v_{2,i})+\frac{\sigma}{2}\| v_{2,i}-Q_{2,i}\tilde{w}_{2,i}^{k+1}+R_{2,i}^k\| ^2 \Big\} \\    
       =& \, \frac{1}{\sigma}\text{Prox}_{\sigma\theta_{2,i}} (\sigma (-Q_{2,i}\tilde{w}_{2,i}^{k+1}+R_{2,i}^k ) )- (-Q_{2,i}\tilde{w}_{2,i}^{k+1}+R_{2,i}^k ).
     \end{aligned}
 \end{equation*} 
Similarly, \eqref{eq:3-1-b-3} is separable w.r.t. $v_{3,ij}$, and therefore can be computed in parallel by solving $N_2N_3$ independent subproblems, that is, for $i=1,\cdots,N_2,~j=1,\cdots,N_3,$
\begin{equation*}
     \begin{aligned} 
      v_{3,ij}^{k+1}=& \, \argmin_{v_{3,ij}} \left\{ \theta_{3,ij}^*(-v_{3,ij})+\frac{\sigma}{2} \| v_{3,ij}-Q_{3,ij}\tilde{w}_{3,ij}^{k+1}+R_{3,ij}^k \| ^2 \right\} \\
       =&\, \frac{1}{\sigma}\text{Prox}_{\sigma\theta_{3,ij}} (\sigma (-Q_{3,ij}\tilde{w}_{3,ij}^{k+1}+R_{3,ij}^k ) )- (-Q_{3,ij}\tilde{w}_{3,ij}^{k+1}+R_{3,ij}^k). 
     \end{aligned}
 \end{equation*} 
 Therefore, the $v$-updates reduce to separable proximal mappings and can be carried out efficiently whenever the corresponding nonsmooth functions are proximal friendly.

\vspace{1mm}
\noindent\textbf{Steps 3.2a\,\&\,3.2c:} In Step 3.2a,
we define $D_1^k :=v_1^{k+1} - c_1(x_1^k) - Q_1w_1^{k+1} + \sigma^{-1}x_1^k$, $D_{2,i}^k :=v_{2,i}^{k+1} - c_{2,i}(x_{2,i}^k) - Q_{2,i}w_{2,i}^{k+1} + \sigma^{-1}x_{2,i}^k$, $D_{3,ij}^k := v_{3,ij}^{k+1} - c_{3,ij}(x_{3,ij}^k) - Q_{3,ij}w_{3,ij}^{k+1} + \sigma^{-1}x_{3,ij}^k$. Then, the update for $\tilde{y}_1^{k+1}$ in \eqref{eq:3-2-a-1} is 
\begin{equation*}
      \begin{aligned}
        \tilde{y}_1^{k+1}
       = &\, \argmin_{y_1} \Big\{ -\inp{b_1,y_1}+\frac{\sigma}{2}  \| A_1^*y_1+\frac{1}{N_2}\sum_{i=1}^{N_2}B_{2,i}^*y_{2,i}^k+z_1^k+D_1^k \| ^2  \Big\} \\
        =&\, (A_1A_1^* )^{-1} \Big(\sigma^{-1}b_1-A_1 \Big(\frac{1}{N_2}\sum_{i=1}^{N_2}B_{2,i}^*y_{2,i}^k+z_1^k+D_1^k \Big) \Big).
      \end{aligned}
\end{equation*} 
Note that the variables $y_{2,1},\dots,y_{2,N_2}$ are coupled in $\widehat{L}_{\sigma}^{x'_{1:3}}\big(v_{1:3},w_{1:3},y_{1:3},z_{1:3};x_{1:3}\big)$, which prevents the corresponding minimization problem from being separated into $N_2$ independent subproblems. Nevertheless, these variables can still be computed efficiently as follows: first, \eqref{eq:3-2-a-2} is given by
\begin{equation*}
      \begin{aligned}
      \!\!\min_{y_{2,1},\cdots,y_{2,N_2}}\!\!\left\lbrace \begin{aligned}
          & \Phi_2^k\big(y_{2,1},\cdots,y_{2,N_2}\big) 
          := 
          \frac{\sigma}{2}\|A_1^*\tilde{y}_1^{k+1}+\frac{1}{N_2}\sum_{i=1}^{N_2}B_{2,i}^*y_{2,i}+z_1^k+D_1^k\|^2  \\ 
          &\!\!+ \frac{1}{N_2}\!\sum_{i=1}^{N_2}\Big(\!-\inp{b_{2,i},y_{2,i}}  \!+\! \frac{\sigma}{2}\|A_{2,i}^*y_{2,i}
          + \frac{1}{N_3}\sum_{j=1}^{N_3}B_{3,ij}^*y_{3,ij}^k\!+z_{2,i}^k+D_{2,i}^k\|^2 \Big)
      \end{aligned}\!\!\right\rbrace
      \end{aligned}.
  \end{equation*} 
The gradient of $\Phi_2^k$ w.r.t. $y_{2,i}$ is
\begin{equation*}
      \begin{aligned}\nabla_{y_{2,i}}\Phi_2^k(y_{2,1},\cdots\!,y_{2,N_2})
      =&  \, \frac{\sigma}{N_2}B_{2,i} \Big(\frac{1}{N_2}\sum_{\mu=1}^{N_2}B_{2,\mu}^*y_{2,\mu}+A_1^*\tilde{y}_1^{k+1}+z_1^k+D_1^k \Big)-\frac{1}{N_2}b_{2,i}\\
        & \,       +\frac{\sigma}{N_2}\Big(A_{2,i} \Big(A_{2,i}^*y_{2,i}+\frac{1}{N_3}\sum_{j=1}^{N_3}B_{3,ij}^*y_{3,ij}^k+z_{2,i}^k+D_{2,i}^k \Big) \Big)\\
        =& \, \frac{\sigma}{N_2}B_{2,i} \Big(\frac{1}{N_2}\sum_{\mu=1}^{N_2}B_{2,\mu}^*y_{2,\mu} \Big)+\frac{\sigma}{N_2}A_{2,i}A_{2,i}^*y_{2,i}+\frac{\sigma}{N_2}\phi_{2,i},
      \end{aligned}
  \end{equation*} 
where $\phi_{2,i}:=B_{2,i} (A_1^*\tilde{y}_1^{k+1}+z_1^k+D_1^k )-\sigma^{-1}b_{2,i} +A_{2,i} (\frac{1}{N_3}\sum_{j=1}^{N_3}B_{3,ij}^*y_{3,ij}^k+z_{2,i}^k+D_{2,i}^k )$. Then, applying Lemma \ref{lem:key} yields 
\begin{equation*}
\tilde{y}_{2,i}^{k+1} =
-\big(A_{2,i}A_{2,i}^*\big)^{-1}\Big(\frac{1}{N_2}B_{2,i}\tilde{b}_2^k+\phi_{2,i}\Big),
\end{equation*}
where $\tilde{b}_2^k\!:=\!-\big(I+\frac{1}{N_2}\sum_{\mu=1}^{N_2}B_{2,\mu}^*(A_{2,\mu}A_{2,\mu}^* )^{-1}B_{2,\mu}\big)^{-1}\!\big(\sum_{\mu=1}^{N_2}B_{2,\mu}^*(A_{2,\mu}A_{2,\mu}^* )^{-1}\!\phi_{2,\mu}\big)$. Similarly, for each $i=1,\cdots,N_2$, the variables $y_{3,i1},y_{3,i2},\cdots,y_{3,iN_3}$ can be obtained by solving
\begin{equation*}
\min_{y_{3,i1},\cdots,y_{3,iN_3}}\left\lbrace 
\begin{aligned}
& \frac{\sigma}{2}\|A_{2,i}^*\tilde{y}_{2,i}^{k+1}+\frac{1}{N_3}\sum_{j=1}^{N_3}B_{3,ij}^*y_{3,ij}+z_{2,i}^k+D_{2,i}^k\|^2 \\[3pt]
& +\frac{1}{N_3}\sum_{j=1}^{N_3}\left( -\inp{b_{3,ij},y_{3,ij}}+\frac{\sigma}{2}\|A_{3,ij}^*y_{3,ij}+z_{3,ij}^k+D_{3,ij}^k\|^2 \right)
\end{aligned}\right\rbrace.
\end{equation*}
By applying the same argument as above, for \eqref{eq:3-2-a-3}, we obtain
\begin{equation*}
\tilde{y}_{3,ij}^{k+1}
=-\big(A_{3,ij}A_{3,ij}^*\big)^{-1}\big(\frac{1}{N_3}B_{3,ij}\tilde{b}_{3,i}^k+\phi_{3,ij}\big),
\end{equation*}
where $\phi_{3,ij}:=B_{3,ij}\big(A_{2,i}^*\tilde{y}_{2,i}^{k+1}+z_{2,i}^k+D_{2,i}^k\big)-\sigma^{-1}b_{3,ij}+A_{3,ij}\big(z_{3,ij}^k+D_{3,ij}^k\big)$ and $\tilde{b}_{3,i}^k := - \big(I+\frac{1}{N_3}\sum_{\mu=1}^{N_3}B_{3,i\mu}^*(A_{3,i\mu}A_{3,i\mu}^*)^{-1}B_{3,i\mu}\big)^{-1}\big(\sum_{\mu=1}^{N_3}B_{3,i\mu}^*(A_{3,i\mu}A_{3,i\mu}^*)^{-1}\phi_{3,i\mu}\big)$.

The updates of $y_1^{k+1}$, $y_2^{k+1}$, and $y_3^{k+1}$ in Step~3.2c are performed analogously to those in Step~3.2a. The $y$-subproblem contains the main coupling induced by the interstage constraints between adjacent stages, but this coupling follows the block structure of the scenario tree. By Lemma~\ref{lem:key}, the coupled $y$-subproblem can be reduced to smaller linear systems organized by stages and scenarios, so that the update can be implemented without solving a large system over all stages and scenarios. In particular, the third-stage update decomposes into \(N_3\) independent subproblems per second-stage node, with the resulting \(N_2\) groups processed in parallel. Therefore, the parallel computational costs of Step~3.2a and Step~3.2c are $\mathcal{O}\big((1+N_2+N_3)\ell\max \{\ell,d\}\big)$ and $\mathcal{O}\big((N_2+N_3)\ell d+(1+N_2+N_3)\ell\max \{\ell,d\}\big)$, respectively.

\vspace{1mm}
\noindent\textbf{Step 3.2b:} For \eqref{eq:3-2-b-1}, we have 
\begin{equation*}
      \begin{aligned}
        z_1^{k+1} =&\,\argmin_{z_1} \Big\{ \delta_{\mathcal{K}_1}^*(-z_1)
        + \frac{\sigma}{2} \| A_1^*\tilde{y}_1^{k+1}
        + \frac{1}{N_2}\sum_{i=1}^{N_2}B_{2,i}^*\tilde{y}_{2,i}^{k+1}+z_1+D_1^k \|^2 \Big\} \\
        =&\,\frac{1}{\sigma}\Pi_{\mathcal{K}_1}\Big(\sigma \Big(A_1^*\tilde{y}_1^{k+1}+\frac{1}{N_2}\sum_{i=1}^{N_2}B_{2,i}^*\tilde{y}_{2,i}^{k+1}+D_1^k\Big)\Big)\\
        &\quad -\Big(A_1^*\tilde{y}_1^{k+1}+\frac{1}{N_2}\sum_{i=1}^{N_2}B_{2,i}^*\tilde{y}_{2,i}^{k+1}+D_1^k \Big).
      \end{aligned}
\end{equation*} 
Moreover, \eqref{eq:3-2-b-2} is separable w.r.t. $z_{2,i}$, and therefore can be computed in parallel by solving $N_2$ independent subproblems, that is, for $ i=1,2,\cdots,N_2,$
\begin{equation*}
     \begin{aligned}
       z_{2,i}^{k+1}
       =& \, \argmin_{z_{2,i}} \Big\{ 
       \delta_{\mathcal{K}_{2,i}}^*(-z_{2,i})
           +\frac{\sigma}{2} \| A_{2,i}^*\tilde{y}_{2,i}^{k+1}+\frac{1}{N_3}\sum_{j=1}^{N_3}B_{3,ij}^*\tilde{y}_{3,ij}^{k+1}+z_{2,i}+D_{2,i}^k \| ^2
        \Big\} \\ 
       =&\, \frac{1}{\sigma}\Pi_{\mathcal{K}_{2,i}} \Big(\sigma \Big(A_{2,i}^*\tilde{y}_{2,i}^{k+1}+\frac{1}{N_3}\sum_{j=1}^{N_3}B_{3,ij}^*\tilde{y}_{3,ij}^{k+1}+D_{2,i}^k \Big) \Big) \\
       &\quad - \Big(A_{2,i}^*\tilde{y}_{2,i}^{k+1}+\frac{1}{N_3}\sum_{j=1}^{N_3}B_{3,ij}^*\tilde{y}_{3,ij}^{k+1}+D_{2,i}^k \Big).
     \end{aligned}
 \end{equation*} 
Finally, \eqref{eq:3-2-b-3} is separable w.r.t. $z_{3,ij}$, and therefore can be computed in parallel by solving $N_2N_3$ independent subproblems, that is, for $i=1,2,\cdots,N_2,~j=1,2,\cdots,N_3,$
\begin{equation*}
     \begin{aligned}
       z_{3,ij}^{k+1}
        =&  \, \argmin_{z_{3,ij}}  \left\{ 
        \delta_{\mathcal{K}_{3,ij}}^*(-z_{3,ij})+\frac{\sigma}{2} \| A_{3,ij}^*\tilde{y}_{3,ij}^{k+1}+z_{3,ij}+D_{3,ij}^k \|^2  \right\} \\
        =& \, \frac{1}{\sigma}\Pi_{\mathcal{K}_{3,ij}}\big(\sigma  \big(A_{3,ij}^*\tilde{y}_{3,ij}^{k+1}+D_{3,ij}^k\big) \big) 
        - \left(A_{3,ij}^*\tilde{y}_{3,ij}^{k+1}+D_{3,ij}^k \right).
     \end{aligned}
\end{equation*} 
Therefore, the $z$-updates reduce to separable projection-type computations and can be carried out efficiently whenever the projections onto the corresponding sets are easy to compute.

In summary, the $w$-, $v$-, and $z$-updates in Algorithm~\ref{alg:tsp-md} have separable or structured forms, while the coupled $y$-updates can be handled by Lemma~\ref{lem:key}. Hence, the proposed SL-MDD method preserves the stage-wise and scenario-wise structure of the TSP problem and is well suited for parallel implementation.
\subsection{Extension to the general multi-stage case}

Based on the developments in the preceding sections, we are now ready to present an algorithm for solving the MSP problem \eqref{MSP}. Following the framework in Section \ref{sec:alg-des}, the corresponding minorized problem takes the form
\begin{equation}
\begin{aligned}\label{MSP2}
\min\limits_{x_{t}, s_{t}, u_{t}} 
\quad &\sum_{t=1}^T\left(\theta_t(u_t)+ \delta_{\mathcal{K}_t}(s_t)+\tilde{h}_t(x_t;x_t')\right) \\
\mbox{s.t.}\quad \,
&A_1x_1= b_1, \\
&B_tx_{t-1} + A_tx_t= b_t, ~~t=2,\cdots,T,\\ 
&u_t=x_t,~~s_t=x_t,~~t=1,\cdots,T.
\end{aligned}
\end{equation}
Here, the functions $\theta_t$ and $\tilde{h}_t$ are defined analogously to their three-stage counterparts in \eqref{theta&h} and \eqref{tilde_h}, with the definitions extended naturally to the general $T$-stage setting. The restricted Wolfe dual problem of \eqref{MSP2} is given by
\begin{equation}\label{MSP3}
\begin{aligned}
\max\limits_{v_t,w_t,y_t,z_t} 
& \sum_{t=1}^{T}\Big(-\frac{1}{2}\inp{w_t,Q_tw_t}-\theta_t^*(-v_t)-\delta_{\mathcal{K}_t}^*(-z_t)+\inp{b_t,y_t} \\
&\qquad + h_t(x_t') + \frac{1}{2}\inp{x_t',Q_tx_t'}-\inp{\nabla h_t(x_t'),x_t'} \Big) \\[2pt]
\mbox{s.t.}\quad
&c_t(x'_t)-A_t^{*}y_t-B_{t+1}^{*}y_{t+1}-v_t-z_t=-Q_tw_t, \\[2pt]
&w_t \in \mathcal{W}_t,~~t=1,\cdots,T,
\end{aligned}
\end{equation}
where we set $B^*_{T+1}=0$ and $y_{T+1}=0$ for notational consistency. The associated augmented Lagrangian function is
\begin{equation*}
\begin{aligned}
& \widehat{L}_{\sigma}^{x'_{1:T}}\big(v_{1:T},w_{1:T},y_{1:T},z_{1:T};x_{1:T}\big)
:= \sum_{t=1}^{T} \Big(\theta_t^*(-v_t)+\delta_{\mathcal{K}_t}^*(-z_t)+\frac{1}{2}\inp{w_t,Q_tw_t} \\
&\textstyle 
\quad +\delta_{\mathcal{W}_t}(w_t)-\inp{b_t,y_t}
+\frac{\sigma}{2}\|A_t^{*}y_t+B_{t+1}^{*}y_{t+1}+v_t+z_t-c_t(x'_t)-Q_tw_t+\sigma^{-1}x_t\|^2 \\
&
\quad -h_t(x_t')-\frac{1}{2}\inp{x_t',Q_tx_t'}+\inp{\nabla h_t(x_t'),x_t'}
-\frac{1}{2\sigma} \|x_t\| ^2 \Big).
\end{aligned}
\end{equation*}
The resulting SL-MDD method for problem \eqref{MSP} is outlined in Algorithm~\ref{alg:msp}.

\begin{algorithm}
\small
\caption{A SL-MDD method for the MSP problem \eqref{MSP}}
\label{alg:msp}
\begin{algorithmic}
\STATE{Let $\tau \in  (0,\frac{1+\sqrt{5-4\alpha}}{2} ]$ be the step-length with $\alpha \in (0,1)$, and $\{\varepsilon_k\}_{k\geq0}$ be a summable sequence of nonnegative numbers. Choose an initial point $\varpi^0=(v^0_{1:T},w^0_{1:T},y^0_{1:T},z^0_{1:T},x^0_{1:T})$. Set $k = 0$.}
\WHILE{the termination criterion is not met,}
\STATE{\textbf{Step 1:} Construct the minorized problem \eqref{MSP2} at the point $x_{1:T}^k$.}
\STATE{\textbf{Step 2:} Derive the restricted Wolfe dual problem} \eqref{MSP3}.
\STATE{\textbf{Step 3:} Call one sGS-iADMM iteration to solve \eqref{MSP3}:}
\STATE{\hspace{\algorithmicindent} \textbf{Step 3.1a:}
For $t=1,\ldots,T,$ compute}
\begin{equation*}
\begin{aligned}
\textstyle  \tilde{w}_t^{k+1} \approx \argmin_{w_t} \lbrace
\widehat{L}_{\sigma}^{x^k_{1:T}} ( v_{1:T}^k, ( \tilde{w}_{\leq t-1}^{k+1}, w_t, w_{\geq t+1}^k ), y_{1:T}^k, z_{1:T}^k;x_{1:T}^k )  \rbrace.
\end{aligned}
\end{equation*}
\vspace{-1.5ex}
\STATE{\hspace{\algorithmicindent} \textbf{Step 3.1b:}
For $t=1,\ldots,T,$ compute}
\begin{equation*}
\begin{aligned}
  \textstyle v_t^{k+1} \approx \argmin_{v_t}  \lbrace \widehat{L}_{\sigma}^{x^k_{1:T}} (  ( v_{\leq t-1}^{k+1}, v_t, v_{\geq t+1}^k  ), \tilde{w}_{1:T}^{k+1}, y_{1:T}^k, z_{1:T}^k; x_{1:T}^k  )  \rbrace.
\end{aligned}
\end{equation*}
\vspace{-1.5ex}
\STATE{\hspace{\algorithmicindent} \textbf{Step 3.1c:}
For $t=T,\ldots,1,$ compute}
\begin{equation*}
\begin{aligned}
  \textstyle w_t^{k+1} \approx  \argmin_{w_t} \lbrace \widehat{L}_{\sigma}^{x^k_{1:T}} ( v_{1:T}^{k+1},  ( \tilde{w}_{\leq t-1}^{k+1}, w_t, w_{\geq t+1}^{k+1}  ), y_{1:T}^k, z_{1:T}^k; x_{1:T}^k  ) \rbrace.
\end{aligned}
\end{equation*}
\vspace{-1.5ex}
\STATE{\hspace{\algorithmicindent} \textbf{Step 3.2a:}
For $t=1,\ldots,T,$ compute}
\begin{equation*}
\begin{aligned}
   \textstyle\tilde{y}_t^{k+1} \approx \argmin_{y_t} \{ \widehat{L}_{\sigma}^{x^k_{1:T}} ( v_{1:T}^{k+1}, w_{1:T}^{k+1}, ( \tilde{y}_{\leq t-1}^{k+1}, y_t, y_{\geq t+1}^k  ), z_{1:T}^k; x_{1:T}^k  ) \}.
\end{aligned}
\end{equation*}
\vspace{-1.5ex}
\STATE{\hspace{\algorithmicindent} \textbf{Step 3.2b:}
For $t=1,\ldots,T,$ compute}
\begin{equation*}
\begin{aligned}
  \textstyle z_t^{k+1} \approx \argmin_{z_t}  \lbrace \widehat{L}_{\sigma}^{x^k_{1:T}} ( v_{1:T}^{k+1}, w_{1:T}^{k+1}, \tilde{y}_{1:T}^{k+1}, ( z_{\leq t-1}^{k+1}, z_t, z_{\geq t+1}^k ); x_{1:T}^k  )  \rbrace.
\end{aligned}
\end{equation*}
\vspace{-1.5ex}
\STATE{\hspace{\algorithmicindent} \textbf{Step 3.2c:}
For $t=T,\ldots,1,$ compute}
\begin{equation*}
\begin{aligned}
 \textstyle  y_t^{k+1} \approx \argmin_{y_t}  \{ \widehat{L}_{\sigma}^{x^k_{1:T}} ( v_{1:T}^{k+1}, w_{1:T}^{k+1}, ( \tilde{y}_{\leq t-1}^{k+1}, y_t, y_{\geq t+1}^{k+1}  ), z_{1:T}^{k+1}; x_{1:T}^k  )  \}.
\end{aligned}
\end{equation*}
\vspace{-1.5ex}
\STATE{\hspace{\algorithmicindent} \textbf{Step 3.3:} For $t=1,\ldots,T,$ compute}
\begin{equation*}
\begin{aligned}
   ~~~&x_t^{k+1} = x_t^k + \tau\sigma(A_t^*y_t^{k+1} + B_{t+1}^*y_{t+1}^{k+1} + z_t^{k+1} + v_t^{k+1} - c_t(x_t^{k}) - Q_tw_t^{k+1}). \\
  \end{aligned}   
  \end{equation*}
\ENDWHILE
\end{algorithmic}
\end{algorithm}


\section{Convergence analysis}\label{sec:convergence}
In this section, we establish the global convergence of the proposed SL-MDD method in Algorithm~\ref{alg:tsp-md} for the TSP problem \eqref{P1}. For convergence analysis, we introduce an equivalent SL-MD-isPADMM, and show that it is equivalent to the proposed SL-MDD method after a suitable grouping of the dual variables. Consequently, the convergence of the SL-MDD method follows from that of the SL-MD-isPADMM. 
This equivalent method is used only as an analytical device, while the implementable algorithmic framework remains the SL-MDD method developed in Section~\ref{sec:alg}. We present the detailed convergence analysis for the three-stage case and then extend the same proof pattern to the general multi-stage setting.

We first introduce the equivalent SL-MD-isPADMM for the TSP problem \eqref{P1}. 
Let $m:=(v_{1:3};w_{1:3})$, $n:=(z_{1:3};y_{1:3})$, $x':=(x'_1;x'_2;x'_3)$, and $x:=(x_1;x_2;x_3)$. 
Then the restricted Wolfe dual problem \eqref{D1} can be equivalently rewritten as
\begin{equation}\label{D2}
\begin{aligned}
\min_{m,n}\quad &p(m)+f(m; x')+q(n)+g(n) \\
\mbox{s.t.}\quad
&\mathcal{F}^*m+\mathcal{G}^*n=c(x'),
\end{aligned}
\end{equation}
where 
\begin{equation*}
\begin{aligned}
&p(m):=\sum_{t=1}^3\theta_t^*(-v_t), 
~~q(n):=\sum_{t=1}^3\delta_{\mathcal{K}_t}^*(-z_t),
~~g(n):=-\sum_{t=1}^3\langle b_t,y_t\rangle,\\[1pt]
&f(m;x'):=\sum_{t=1}^3\Big(\frac{1}{2}\langle w_t,Q_tw_t \rangle+\delta_{\mathcal{W}_t}(w_t)-h_t(x_t')-\frac{1}{2}\langle x_t',Q_tx_t'\rangle+\langle \nabla h_t(x_t'),x_t'\rangle\Big),\\
&\setlength{\arraycolsep}{2pt}
\mathcal{F}^*\!:=\!\begin{pmatrix}
I & 0 & 0 & -Q_1 & 0 & 0\\
0 & I & 0 & 0 & -Q_2 & 0\\
0 & 0 & I & 0 & 0 & -Q_3
\end{pmatrix}\!,~
\mathcal{G}^*\!:=\!\begin{pmatrix}
I & 0 & 0 & A_1^* & B_2^* & 0\\
0 & I & 0 & 0 & A_2^* & B_3^*\\
0 & 0 & I & 0 & 0 & A_3^*
\end{pmatrix}\!,~
c(x')\!:=\!\begin{pmatrix}
c_1(x'_1)\\
c_2(x'_2)\\
c_3(x'_3)
\end{pmatrix}\!.
\end{aligned}
\end{equation*}
The augmented Lagrangian function associated with problem \eqref{D2} is
\begin{equation*}
\begin{aligned}
\widehat{L}_{\sigma}^{x'}(m,n;x):= \,&  p(m)+f(m; x')+q(n)+g(n)-\frac{1}{2\sigma}\|x\|^2\\
& +\frac{\sigma}{2}\|\mathcal{F}^*m+\mathcal{G}^*n-c(x')+\sigma^{-1}x\|^2.
\end{aligned}
\end{equation*}
Furthermore, let $\mathcal{S}$ and $\mathcal{T}$ be positive semidefinite linear operators defined by
\begin{align*}
	\mathcal{S}:=sGS(\widetilde{\mathcal{M}}),\quad \mathcal{T}:=sGS(\widetilde{\mathcal{N}}),
\end{align*}
where $\widetilde{\mathcal{M}}:=\widetilde{Q}+\sigma\mathcal{F}\mathcal{F}^*$ and $\widetilde{\mathcal{N}}:=\sigma\mathcal{G}\mathcal{G}^*$, with $Q:= \text{diag}(Q_1,Q_2,Q_3)$ and $\widetilde{Q}:= \text{diag}(0,Q)$. Define 
$\mathcal{M}:=\widetilde{\mathcal{M}}+\mathcal{S},
    ~\mathcal{N}:=\widetilde{\mathcal{N}}+\mathcal{T}.$
By the sGS decomposition theorem \cite[Theorem 1]{li2019block}, it follows that $\mathcal{M}\succ0$ and $\mathcal{N}\succ0$. 

We are now ready to present the equivalent SL-MD-isPADMM for the TSP problem \eqref{P1} in Algorithm \ref{alg:tsp-p}.
\begin{algorithm}
\small
\caption{An equivalent SL-MD-isPADMM for the TSP problem \eqref{P1}}
\label{alg:tsp-p}
\begin{algorithmic}
\STATE{Let $\tau \in (0,\frac{1+\sqrt{5-4\alpha}}{2}]$ be the step-length with $\alpha \in (0,1)$, and $\{\varepsilon_k\}_{k\geq0}$ be a summable sequence of nonnegative numbers. Set $k = 0$.}
\WHILE{the termination criterion is not met,}
\STATE{\textbf{Step 1:} Construct the minorized problem \eqref{P3} at the point $x^k$.}
\STATE{\textbf{Step 2:} Derive the restricted Wolfe dual problem \eqref{D2}.}
\STATE{\textbf{Step 3:} Call one isPADMM iteration to solve \eqref{D2}:}
\STATE{\hspace{\algorithmicindent} \textbf{Step 3.1:} Compute  
\vspace{-1ex}
\begin{equation}\label{m}
\begin{aligned}
m^{k+1} \approx \bar{m}^{k+1} := \,\argmin_{m}\Big\{\widehat{L}_{\sigma}^{x^k}(m,n^k;x^k)+\frac{1}{2}\|m-m^k\|_{\mathcal{S}}^2\Big\},
\end{aligned}   
\vspace{-1ex}
\end{equation} 
\hspace{\algorithmicindent} such that there exists a vector $d_m^k$ satisfying $\|\mathcal{M}^{-\frac{1}{2}}d_m^k\| \leq\varepsilon_k$ and  
\begin{equation*}
\begin{aligned}
d_m^k \in \partial_m\widehat{L}_{\sigma}^{x^k}(m^{k+1},n^k;x^k)+\mathcal{S}(m^{k+1}-m^k).
\end{aligned}  
\end{equation*}}

\STATE{\hspace{\algorithmicindent} \textbf{Step 3.2:} Compute   
\begin{equation}
\begin{aligned}\label{n}
 n^{k+1} \approx \bar{n}^{k+1} := \,\argmin_{n}\Big\{\widehat{L}_{\sigma}^{x^k}(\bar{m}^{k+1},n;x^k)+\frac{1}{2}\|n-n^k\|_{\mathcal{T}}^2\Big\},
\end{aligned} 
 \vspace{-1ex}
\end{equation}
\hspace{\algorithmicindent} such that there exists a vector $d_n^k$ satisfying $\|\mathcal{N}^{-\frac{1}{2}}d_n^k\| \leq\varepsilon_k$ and 
\begin{equation*}
\begin{aligned}
 d_n^k \in \partial_n\widehat{L}_{\sigma}^{x^k}(m^{k+1},n^{k+1};x^k)+\mathcal{T}(n^{k+1}-n^k).
\end{aligned} 
\end{equation*} }
  
\STATE{\hspace{\algorithmicindent} \textbf{Step 3.3:} Compute~
$x^{k+1} = x^k + \tau\sigma\left(\mathcal{F}^*m^{k+1} + \mathcal{G}^*n^{k+1} - c(x^k)\right)$. 
}
\ENDWHILE
\end{algorithmic}
\end{algorithm}


\begin{theorem}\label{3.3}
The equivalent SL-MD-isPADMM in Algorithm~\ref{alg:tsp-p} is equivalent to the proposed SL-MDD method in Algorithm~\ref{alg:tsp-md}.
\end{theorem}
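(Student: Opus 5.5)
The plan is to reduce the statement to the inexact block sGS decomposition theorem \cite[Theorem 1]{li2019block}, applied twice: once to the $m$-subproblem \eqref{m} and once to the $n$-subproblem \eqref{n}. The first thing to fix is the block ordering that matches Algorithm~\ref{alg:tsp-md}.

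\emph{Block ordering.} For $m=(v_{1:3};w_{1:3})$ I will order the blocks as $(w_1,w_2,w_3,v_{1:3})$. This fits because $p(m)=\sum_t\theta_t^*(-v_t)$ is separable in $(v_1,v_2,v_3)$ and $\widetilde{\mathcal M}$ restricted to the $v$-part is $\sigma I$. Hence the sequential $v$-updates in Step 3.1b coincide with a joint update of the single nonsmooth block $v_{1:3}$. For $n=(z_{1:3};y_{1:3})$ I will order the blocks as $(y_1,y_2,y_3,z_{1:3})$. In the same way, $q$ is separable and the $z$-part of $\sigma\mathcal G\mathcal G^*$ is $\sigma I$, so Step 3.2b is a joint update of $z_{1:3}$. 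The operators $\mathcal S$ and $\mathcal T$ are the sGS operators of $\widetilde{\mathcal M}$ and $\widetilde{\mathcal N}$ with respect to exactly these partitions.

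\emph{Applying the sGS theorem.} With this ordering, the augmented Lagrangian $\widehat L_\sigma^{x^k}(\cdot,n^k;x^k)$ has the form ``nonsmooth separable term in the first block plus a quadratic with Hessian $\widetilde{\mathcal M}$ in the smooth blocks''. The blocks $w_t$ carry $\frac12\langle w_t,Q_tw_t\rangle+\delta_{\mathcal W_t}(w_t)$, which I fold in as a quadratic restricted to a subspace. The backward sweep (Step 3.1a: $\tilde w_1,\tilde w_2,\tilde w_3$), the nonsmooth block update (Step 3.1b), and the forward sweep (Step 3.1c: $w_3,w_2,w_1$) together produce an approximate minimizer of $\widehat L_\sigma^{x^k}(m,n^k;x^k)+\frac12\|m-m^k\|^2_{\mathcal S}$. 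The theorem also says how the errors combine: the blockwise residuals $d^k_{\tilde w_t},d^k_{v_t},d^k_{w_t}$ assemble into a single residual $d^k_m\in\partial_m\widehat L+\mathcal S(m^{k+1}-m^k)$. Here $d^k_m$ is the first-block residual plus a correction of the form $\mathcal H_u\mathcal H_d^{-1}(\cdot)$ applied to the difference of the two sweep errors. Consequently $\|\mathcal M^{-1/2}d^k_m\|$ is bounded by a constant multiple of $\varepsilon_k$. Rescaling the summable sequence $\{\varepsilon_k\}$ by this constant matches the tolerance in Step 3.1 of Algorithm~\ref{alg:tsp-p}.

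The same argument applied to the $n$-subproblem, with fixed $m=m^{k+1}$, identifies Steps 3.2a--3.2c with Step 3.2. One detail needs care. Algorithm~\ref{alg:tsp-p} writes $\bar m^{k+1}$ inside \eqref{n} but uses $m^{k+1}$ in the inexactness condition. I will follow the inexactness condition, since Algorithm~\ref{alg:tsp-md} uses $v^{k+1},w^{k+1}$. Steps 3.3 of the two algorithms coincide verbatim, because $\mathcal F^*m+\mathcal G^*n-c(x^k)$ is exactly the stacked residual.

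\emph{Main obstacle.} The delicate point is that $\widetilde{\mathcal M}_d$ must be positive definite for the sGS theorem to apply. The $w_t$-diagonal blocks $Q_t+\sigma Q_t^2$ are only positive semidefinite, and so the $w$-blocks must be handled on $\mathcal W_t=\mathrm{Range}(Q_t)$, where these blocks are invertible. I would check that the theorem holds when each block lives in the subspace $\mathcal W_t$; the $\delta_{\mathcal W_t}$ term forces this. Similarly, the $y$-diagonal blocks $\sigma(A_tA_t^*+B_tB_t^*)$ need the full row rank assumption on the $A_t$. For the $w$-blocks I also expect to observe that the off-diagonal coupling in $\widetilde{\mathcal M}$ appears only between each $w_t$ and its own $v_t$. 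The $w_t$ are mutually uncoupled, so the forward-backward sweeps over $w_1,w_2,w_3$ are consistent with the sGS operator even though they appear to be ordered.
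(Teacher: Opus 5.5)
Your plan follows the paper's route: apply the block sGS decomposition theorem once to the $m$-subproblem and once to the $n$-subproblem, identify the $(w,v,w)$ and $(y,z,y)$ sweeps of Algorithm~\ref{alg:tsp-md} with the grouped proximal updates of Algorithm~\ref{alg:tsp-p}, and collect the blockwise errors. Your extra remarks are correct and supply details the paper's one-paragraph proof leaves implicit: working on $\mathcal W_t=\mathrm{Range}(Q_t)$ so that the diagonal blocks $Q_t+\sigma Q_t^2$ are invertible, using full row rank of the $A_t$ for the $y$-diagonal blocks, and rescaling $\{\varepsilon_k\}$ by a constant so the blockwise tolerances imply the grouped criterion.

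One claim needs fixing, because the sGS operator depends on block order whenever the smooth blocks are coupled. Under the paper's convention $\operatorname{sGS}(\mathcal H)=\mathcal H_u\mathcal H_d^{-1}\mathcal H_u^*$, the nonsmooth block must be the \emph{first} block. The sweep then runs backward over blocks $s,\dots,2$, then block $1$, then forward over $2,\dots,s$. You list the nonsmooth block last, e.g.\ $(y_1,y_2,y_3,z_{1:3})$. Yet you speak of the nonsmooth term ``in the first block'' and assert that $\mathcal S,\mathcal T$ are the sGS operators for ``exactly these partitions''. Read literally, this fails. For $m$, the paper's formula applied to $(w_1,w_2,w_3,v_{1:3})$ gives the proximal term $\sigma Q^2$ on the $w$-blocks and $0$ on $v$. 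The operator that actually reproduces Steps 3.1a--3.1c is $\sigma^2Q(I+\sigma Q)^{-1}$ on the $v$-block and $0$ on $w$. For $n$, the difference changes the iterates, since $y_1,y_2,y_3$ are coupled through $A_1B_2^*$ and $A_2B_3^*$.

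What you need is $\mathcal T=\operatorname{sGS}(\widetilde{\mathcal N})$ for the order $(z_{1:3},y_3,y_2,y_1)$, equivalently $\mathcal H_u^*\mathcal H_d^{-1}\mathcal H_u$ for your listed order. The same applies to $\mathcal S$, where the relative order of the uncoupled $w_t$ does not matter. With that convention, your reversed order is exactly the one matching Algorithm~\ref{alg:tsp-md}. The paper's literal ordering $n=(z_{1:3};y_{1:3})$ would instead produce the sweep $\tilde y_3,\tilde y_2,\tilde y_1$, then $z$, then $y_1,y_2,y_3$. Since the convergence analysis uses only $\mathcal T\succeq0$ and $\mathcal N\succ0$, nothing downstream is affected.

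Relatedly, the aggregated residual in the inexact sGS theorem (nonsmooth block first) is $\delta+\mathcal H_u\mathcal H_d^{-1}(\delta-\delta')$. Here $\delta$ collects the residuals of the nonsmooth-block step \emph{and} of the entire second sweep, not only the first-block residual. Your bound $\|\mathcal M^{-1/2}d_m^k\|\le c\,\varepsilon_k$ still holds.
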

\begin{proof}
The conclusion follows from the sGS decomposition theorem 
\cite[Theorem~1]{li2019block}. 
The grouping $m=(v_{1:3},w_{1:3})$ and $n=(z_{1:3},y_{1:3})$ is induced by the dual-side coupling structure of the restricted Wolfe dual. 
With the choices $\mathcal S=\operatorname{sGS}(\widetilde{\mathcal M})$ and $\mathcal T=\operatorname{sGS}(\widetilde{\mathcal N})$, the grouped $m$-update in Algorithm~\ref{alg:tsp-p} is equivalent to the $(w,v,w)$-sweep in Algorithm~\ref{alg:tsp-md}, while the grouped $n$-update is equivalent to the $(y,z,y)$-sweep. 
The blockwise inexactness conditions in Algorithm~\ref{alg:tsp-md} are collected into the inexactness criteria in Algorithm~\ref{alg:tsp-p}. 
Therefore, the two algorithms generate identical iterates, and the equivalence follows.
\end{proof}

By Theorem \ref{3.3}, it suffices to analyze the convergence of Algorithm \ref{alg:tsp-p}; the convergence of Algorithm \ref{alg:tsp-md} then follows immediately.
We next introduce the essential notations and definitions for the convergence analysis.
Define $\Delta_m^k:= m^k-m^{k+1}$, $\Delta_n^k:= n^k-n^{k+1}$, $r^k:=\mathcal{F}^*m^k+\mathcal{G}^*n^k-c (x^{k-1} )$, $\tilde{x}^{k+1}:=x^k+\sigma r^{k+1}$, $ R (m^{k+1},n^k ):=\mathcal{F}^*m^{k+1}+\mathcal{G}^*n^k-c (x^{k} )$.
Moreover, let $ \gamma:=\max \{1-\tau, 1-\tau^{-1}\} $, $\beta:=
\begin{cases}
1-\alpha\tau, & \tau\in(0,1],\\[0.3em]
1-\tau+(1-\alpha)\tau^{-1}, & \tau\in(1,\infty),
\end{cases}
$ with $\alpha \in (0,1)$. Given that $\tau \in (0,\frac{1+\sqrt{5-4\alpha}}{2} ]$, we have $0\leq \gamma<1$ and $\beta>0$. We further define $ \mathcal{A}:=\mathcal{T}+\min  \{\tau, 1+\tau-\tau^{2} \}(1-\alpha) \sigma \mathcal{GG}^{*}.$ For $\varpi:=(m,n,x)$, define
\begin{equation*}
    \begin{aligned}
       \phi_{k}(\varpi):=&~\textstyle\frac{1}{\tau \sigma} \|x-x^{k} \|^{2}+ \|m-m^{k} \|_{\widetilde{Q}+\mathcal{S}}^{2}+ \|n-n^{k} \|_{\mathcal{N}}^{2}+(1-\alpha) \gamma\sigma \|r^{k} \|^{2}\\
       &+(1-\alpha) \|\Delta_{n}^{k-1} \|_{\mathcal{T}}^{2},\\
		  \bar{\phi}_{k}(\varpi):=&~\textstyle \frac{1}{\tau \sigma} \|x-\bar{x}^{k} \|^{2}+ \|m-\bar{m}^{k} \|_{\widetilde{Q}+\mathcal{S}}^{2}+ \|n-\bar{n}^{k} \|_{\mathcal{N}}^{2}+(1-\alpha) \gamma\sigma \|\bar{r}^{k} \|^{2}\\
		 &+(1-\alpha) \|\bar{n}^{k}-n^{k-1} \|_{\mathcal{T}}^{2}.
	\end{aligned}
\end{equation*}
Assume that both the primal problem \eqref{P1} and its dual have nonempty interior feasible regions. Then optimal solutions exist, and they satisfy the following Karush--Kuhn--Tucker (KKT) conditions:
 \begin{equation}\label{KKT}
 \begin{cases}
 \begin{aligned}
   &A_1x_1 = b_1,~B_2x_1 + A_2x_2= b_2,~B_3x_2 + A_3x_3 = b_3,\\
   &\nabla h_1(x_1)-A_1^{*}y_1-B_2^{*}y_2-v_1-z_1=0, \\
   &\nabla h_2(x_2)-A_2^{*}y_2-B_3^{*}y_3-v_2-z_2=0,\\ 
   &\nabla h_3(x_3)-A_3^{*}y_3-v_3-z_3=0,\\ 
   &0 \in \partial\theta_t^*(-v_t)-x_t \Leftrightarrow -v_t=\text{Prox}_{\theta_t^*}(x_t-v_t) \Leftrightarrow
   x_t=\text{Prox}_{\theta_t}(x_t-v_t),\\
   &0 \in \partial\delta_{\mathcal{K}_t}^*(-z_t)-x_t \Leftrightarrow -z_t=\text{Prox}_{\delta_{\mathcal{K}_t}^*}(x_t-z_t) \Leftrightarrow x_t=\Pi_{\mathcal{K}_t}(x_t-z_t),\\
 \end{aligned}
\end{cases}
\end{equation}
where $\Pi_{\mathcal{K}_t}$ denote the projection on $\mathcal{K}_t$, $t=1, 2, 3$. Define $\hat{b}:=(0;b)$ and  $\nabla h(x):=(\nabla h_1(x_1)$; $\nabla h_2(x_2);\nabla h_3(x_3))$. Moreover, by \eqref{xt}, we have $Q_tw_t=Q_tx_t$ for $t=1,2,3$. Since $\partial_m f(m;x')$ is independent of $x'$, we write $\partial_m f(m)$. Therefore, the KKT system \eqref{KKT} can be written compactly as
\begin{equation}\label{KKT2}
\begin{cases}
    \begin{aligned}
   &0 \in \partial p(m)+ \partial_m f(m) +\mathcal{F}x,\\
   &0 \in \partial q(n)+ \nabla g(n) +\mathcal{G}x,\\
   &\mathcal{F}^*m+\mathcal{G}^*n=\nabla h(x)-Qx.
 \end{aligned}
\end{cases}
\end{equation}

We now turn to the convergence analysis of the SL-MD-isPADMM in Algorithm \ref{alg:tsp-p}. We begin with a basic lemma for the subsequent proposition.

\begin{lemma}\label{lem:3.1}
Let  $\{\varpi^k\}$ be the sequence generated by the SL-MD-isPADMM in Algorithm \ref{alg:tsp-p}. For any $\alpha\in(0,1)$ and $k\ge 1$, the following inequality holds:
\begin{equation}
\begin{aligned}\label{5.1}
&~ (1-\tau)\sigma \| r^{k+1} \| ^2+(1-\alpha)\sigma \| R(m^{k+1},n^k) \| ^{2}+2(1-\alpha)\langle d_n^{k-1}-d_n^k,\Delta_n^{k}\rangle\\
\geq & ~ (1-\alpha)\gamma \sigma (  \| r^{k+1} \|^{2} - \| r^{k} \| ^{2} ) +(1-\alpha) (  \| \Delta_{n}^{k} \| _{\mathcal{T}}^{2}- \| \Delta_{n}^{k-1} \| _{\mathcal{T}}^{2} )\\
 & ~ +(1-\alpha)\min \lbrace \tau,1+\tau-\tau^2 \rbrace \sigma \| \mathcal{G^*}\Delta_{n}^{k} \| ^{2}+\beta\sigma \| r^{k+1} \|^{2}.
\end{aligned}
\end{equation}
\end{lemma}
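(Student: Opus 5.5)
The plan is to follow the standard argument for inexact semi-proximal ADMM (as in the convergence analysis of sGS-iADMM in \cite{chen2017efficient}), working only with the $n$-update of Algorithm~\ref{alg:tsp-p} at two consecutive iterations.

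\textbf{Step 1: a residual identity.} Since $r^{k+1}=\mathcal{F}^*m^{k+1}+\mathcal{G}^*n^{k+1}-c(x^k)$, we have
\[
R(m^{k+1},n^k)=r^{k+1}+\mathcal{G}^*\Delta_n^k .
\]
Hence
\[
\|R(m^{k+1},n^k)\|^2=\|r^{k+1}\|^2+2\langle \mathcal{G}^*\Delta_n^k,r^{k+1}\rangle+\|\mathcal{G}^*\Delta_n^k\|^2 .
\]
This lets me rewrite the left-hand side of \eqref{5.1} in terms of $r^{k+1}$ and $\mathcal{G}^*\Delta_n^k$ only.

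\textbf{Step 2: optimality conditions of the $n$-step.} At iteration $k$ the inexactness condition reads
\[
d_n^k-\mathcal{T}(n^{k+1}-n^k)-\mathcal{G}\big(x^k+\sigma r^{k+1}\big)\in\partial q(n^{k+1})+\nabla g(n^{k+1}).
\]
Step 3.3 gives $x^k+\sigma r^{k+1}=x^{k+1}+(1-\tau)\sigma r^{k+1}$. Note that $r^{k}$ uses $c(x^{k-1})$, which is exactly the constant appearing at iteration $k-1$, so nothing mismatches. Writing the same inclusion at iteration $k-1$ gives
\[
d_n^{k-1}-\mathcal{T}(n^{k}-n^{k-1})-\mathcal{G}\big(x^k+(1-\tau)\sigma r^{k}\big)\in\partial q(n^{k})+\nabla g(n^{k}).
\]
Since $g$ is linear, $\nabla g$ is constant, and $\partial q$ is monotone. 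Subtracting the two inclusions and pairing with $n^{k+1}-n^k=-\Delta_n^k$ yields
\[
\langle d_n^{k-1}-d_n^k,\Delta_n^k\rangle \ge \|\Delta_n^k\|_{\mathcal{T}}^2-\langle \mathcal{T}\Delta_n^{k-1},\Delta_n^k\rangle-\sigma\langle \mathcal{G}^*\Delta_n^k,\,r^{k+1}-(1-\tau)r^k\rangle .
\]
Applying $\langle \mathcal{T}a,b\rangle\le\frac12\|a\|_{\mathcal{T}}^2+\frac12\|b\|_{\mathcal{T}}^2$ gives the telescoping quantity $\frac12(\|\Delta_n^k\|_{\mathcal{T}}^2-\|\Delta_n^{k-1}\|_{\mathcal{T}}^2)$. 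Multiplying by $2(1-\alpha)$ produces the $\mathcal{T}$-terms of \eqref{5.1}, together with the cross terms $-2(1-\alpha)\sigma\langle\mathcal{G}^*\Delta_n^k,r^{k+1}\rangle$ and $2(1-\alpha)(1-\tau)\sigma\langle \mathcal{G}^*\Delta_n^k,r^k\rangle$.

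\textbf{Step 3: combination.} I insert the inequality from Step 2 into the left-hand side of \eqref{5.1}. The $-2(1-\alpha)\sigma\langle\mathcal{G}^*\Delta_n^k,r^{k+1}\rangle$ term cancels against the cross term in the expansion of $(1-\alpha)\sigma\|R(m^{k+1},n^k)\|^2$ from Step 1. What remains is
\[
\Big[(1-\tau)+(1-\alpha)\Big]\sigma\|r^{k+1}\|^2+(1-\alpha)\sigma\|\mathcal{G}^*\Delta_n^k\|^2+2(1-\alpha)(1-\tau)\sigma\langle \mathcal{G}^*\Delta_n^k, r^k\rangle
\]
plus the $\mathcal{T}$-telescoping terms. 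Only the last cross term involving $r^k$ still needs to be controlled.

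\textbf{Step 4: the main obstacle, a case-wise Young inequality.} The hardest part is bounding the cross term $2(1-\tau)\langle\mathcal{G}^*\Delta_n^k,r^k\rangle$ so that the constants come out exactly as $\gamma$, $\min\{\tau,1+\tau-\tau^2\}$ and $\beta$. I would first rewrite $\mathcal{G}^*\Delta_n^k=R(m^{k+1},n^k)-r^{k+1}$, or equivalently use $r^k$ and $r^{k+1}$ directly, and then split into two cases.
\begin{itemize}
\item \emph{Case $\tau\in(0,1]$.} Use $2(1-\tau)\langle a,b\rangle\ge-(1-\tau)(\|a\|^2+\|b\|^2)$. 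This leaves a coefficient $\tau$ on $\|\mathcal{G}^*\Delta_n^k\|^2$ and produces $-(1-\tau)\|r^k\|^2$, which matches $\gamma=1-\tau$.
\item \emph{Case $\tau>1$.} Use Young's inequality with weight $\tau$, namely $2|\langle a,b\rangle|\le\tau\|a\|^2+\tau^{-1}\|b\|^2$. This leaves $1+\tau-\tau^2$ on $\|\mathcal{G}^*\Delta_n^k\|^2$ and $-(\tau-1)\tau^{-1}\|r^k\|^2$, which matches $\gamma=1-\tau^{-1}$.
\end{itemize}
Finally, I add and subtract $(1-\alpha)\gamma\sigma\|r^{k+1}\|^2$ to form the telescoping difference $\|r^{k+1}\|^2-\|r^k\|^2$. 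Collecting the remaining coefficient of $\sigma\|r^{k+1}\|^2$ should give $(1-\tau)+(1-\alpha)-(1-\alpha)\gamma$, and the last check is to verify that this is $\ge\beta$ in each case.
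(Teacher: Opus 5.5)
Your proposal is correct and follows the paper's proof essentially step for step. The shared ingredients are the identity $R(m^{k+1},n^k)=r^{k+1}+\mathcal{G}^*\Delta_n^k$, monotonicity of $\partial q$ applied to the $n$-step inclusions at iterations $k-1$ and $k$, and the case-wise Young inequality on $2(1-\tau)\langle r^k,\mathcal{G}^*\Delta_n^k\rangle$; the paper keeps the difference in the form $\tilde{x}^{k+1}-\tilde{x}^k$, which is exactly your $\sigma(r^{k+1}-(1-\tau)r^k)$. Your final check in fact holds with equality, since $(1-\tau)+(1-\alpha)-(1-\alpha)\gamma=\beta$ both for $\tau\le 1$ and for $\tau>1$.
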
 
\begin{proof}
First, note that $\sigma r^{k+1}=\tilde{x}^{k+1}-\tilde{x}^k+(1-\tau) \sigma r^k$. Using the identity $ R (m^{k+1}, n^k) =r^{k+1}+\mathcal{G}^* \Delta_n^k$, we obtain
\begin{equation}
\begin{aligned}\label{5.2}
& ~   (1-\tau) \sigma \|r^{k+1} \|^2+(1-\alpha) \sigma \|R(m^{k+1}, n^k) \|^2 \\
= &~   (1-\tau) \sigma \|r^{k+1} \|^2+(1-\alpha) \sigma \|r^{k+1} \|^2+(1-\alpha) \sigma \| \mathcal{G}^* \Delta_n^k \|^2+2(1-\alpha) \langle\sigma r^{k+1}, \mathcal{G}^* \Delta_n^k \rangle \\
= &~   (2-\tau-\alpha) \sigma \|r^{k+1} \|^2+(1-\alpha) \sigma  \| \mathcal{G}^*\Delta_n^k  \|^2 + 2(1-\tau)(1-\alpha) \sigma\langle r^k,\mathcal{G}^* \Delta_n^k\rangle \\
&~  +2(1-\alpha)\langle \tilde{x}^{k+1}-\tilde{x}^k, \mathcal{G}^* \Delta _n^k\rangle.
\end{aligned}
\end{equation}
Next, by Step 3.2 of Algorithm \ref{alg:tsp-p}, for any $k \geq 0$,
\begin{equation}\label{5.3}
  d_n^k+\hat{b}-\mathcal{G} \tilde{x}^{k+1}+\mathcal{T} \Delta_n^k \in \partial q (n^{k+1} ).\end{equation}
Since $\partial q$ is maximally monotone, for any $k \geq 1$, we have
$$
  \langle d_n^k-d_n^{k-1}-\mathcal{G} (\tilde{x}^{k+1}-\tilde{x}^k ),-\Delta_n^k \rangle+ \langle \mathcal{T} (\Delta_n^k-\Delta_n^{k-1} ),-\Delta_n^k \rangle \geq 0.
$$
Hence, 
\begin{equation}
\begin{aligned}\label{5.4}
 \langle \tilde{x}^{k+1}-\tilde{x}^k, \mathcal{G}^* \Delta_n^k \rangle- \langle d_n^k-d_n^{k-1}, \Delta_n^k \rangle  \geq &~ \|\Delta_n^k \|_{\mathcal{T}}^2- \langle\Delta_n^{k-1}, \Delta_n^k \rangle_{\mathcal{T}}  \\
\geq &~  \frac{1}{2} \|\Delta_n^k \|_{\mathcal{T}}^2-\frac{1}{2} \|\Delta_n^{k-1} \|_{\mathcal{T}}^2.
\end{aligned}
\end{equation}
On the other hand, by the Cauchy--Schwarz inequality,
$$
 \tau \| \mathcal{G}^* \Delta_n^k \|^2+\tau^{-1} \|r^k \|^2 \geq 2 \langle r^k, \mathcal{G}^* \Delta_n^k \rangle \geq- \|\mathcal{G}^* \Delta_n^k \|^2- \|r^k \|^2.
$$
Therefore,
\begin{equation}\label{5.5}
\textstyle 2(1-\tau) \sigma \langle r^k, \mathcal{G}^* \Delta_n^k \rangle \geq \min \{\tau-1,\tau-\tau^2\}\sigma\|\mathcal{G}^* \Delta_n^k\|^2-\max \{1-\tau,1-\tau^{-1}\}\sigma \|r^{k} \|^2.
\end{equation}
Substituting \eqref{5.4} and \eqref{5.5} into \eqref{5.2} yields \eqref{5.1}. This completes the proof.
\end{proof}

We next establish a key inequality for the global convergence analysis.

\begin{proposition}\label{prop:descent}
Suppose that the solution set $\bar{\Omega}$ to the KKT system \eqref{KKT} of problem \eqref{P1} is nonempty and $P_{x}^{k}-Q \preceq \frac{2 \alpha}{\sigma} I$ with $\alpha \in (0,1)$. Let $\{\varpi^k\}:= \{ (m^k,n^k,x^k ) \}$ be generated by the SL-MD-isPADMM in Algorithm \ref{alg:tsp-p}. Then, for any $\bar{\varpi}:=(\bar{m}, \bar{n}, \bar{x}) \in \bar{\Omega}$ and $k \geq 1$,
\begin{equation}\label{5.6}
\begin{aligned}
& 2(1-\alpha) \langle d_{n}^{k}-d_{n}^{k-1}, \Delta_{n}^{k} \rangle-2 \langle d_{m}^{k}, m^{k+1}-\bar{m} \rangle-2 \langle d_n^k, n^{k+1}-\bar{n}\ \rangle\\
&  + \|\Delta_{m}^{k} \|_{\mathcal{S}}^{2}+ \|\Delta_{n}^{k} \|_{\mathcal{A}}^{2}+\beta \sigma \|r^{k+1} \|^{2}
\leq \phi_{k}(\bar{\varpi})-\phi_{k+1}(\bar{\varpi}).
\end{aligned}
\end{equation}
\end{proposition}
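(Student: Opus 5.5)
The plan is to follow the standard descent-inequality argument for (inexact) semi-proximal ADMM, in the spirit of the majorized iPADMM analyses of \cite{chen2017efficient,zhang2020linearly}. The new feature is that the linear part $c(x^k)$ of the constraint moves with $x^k$. This drift is absorbed through the minorization gap, which is what the assumption $P_x^k - Q\preceq \frac{2\alpha}{\sigma}I$ is for. I read $P_x^k$ as the generalized Hessian operator from the mean-value form
\[
\nabla h(x^{k+1})-\nabla h(x^k)=P_x^k(x^{k+1}-x^k),
\]
so that $c(x^{k+1})-c(x^k)=(P_x^k-Q)(x^{k+1}-x^k)$. By convexity of $h$ and \eqref{tilde_h}, $P_x^k-Q\succeq0$.

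\textbf{Step 1 (optimality conditions).} From Steps 3.1--3.2 of Algorithm~\ref{alg:tsp-p} and $\tilde{x}^{k+1}=x^k+\sigma r^{k+1}$, I will write
\[
d_m^k-\mathcal{F}\tilde{x}^{k+1}-\sigma\mathcal{F}\mathcal{G}^*(n^k-n^{k+1})+\mathcal{S}\Delta_m^k\in\partial p(m^{k+1})+\partial_m f(m^{k+1}),
\]
together with \eqref{5.3} for $n$. Let $\bar\varpi\in\bar\Omega$. Using \eqref{KKT2}, the pair $(\bar m,\bar n)$ satisfies $-\mathcal{F}\bar{x}\in\partial p(\bar m)+\partial_m f(\bar m)$ and $\hat{b}-\mathcal{G}\bar x\in\partial q(\bar n)$. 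Monotonicity of $\partial p+\partial_m f$ is strengthened by the $\widetilde Q$-quadratic in $f$; monotonicity of $\partial q$ is applied as is. Adding the two resulting inequalities gives a bound in which the cross terms combine, via $\mathcal{F}^*\bar m+\mathcal{G}^*\bar n=c(\bar x)$, into
\[
\langle \tilde{x}^{k+1}-\bar x,\; r^{k+1}+c(x^k)-c(\bar x)\rangle .
\]
The remaining terms are the proximal terms, the $d$-terms, and the cross term $\sigma\langle \mathcal{G}^*\Delta_n^k,\mathcal{F}^*(m^{k+1}-\bar m)\rangle$.

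\textbf{Step 2 (polarization and the moving constraint).} I will expand every inner product with $\mathcal{S}$, $\mathcal{T}$, $\widetilde{Q}$ and $\frac{1}{\tau\sigma}\|x-\cdot\|^2$ by the identity $2\langle a-b,a-c\rangle=\|a-b\|^2+\|a-c\|^2-\|b-c\|^2$. This telescopes the $\|\cdot-\bar\cdot\|^2$ terms into $\phi_k(\bar\varpi)-\phi_{k+1}(\bar\varpi)$, up to the auxiliary quantities $(1-\tau)\sigma\|r^{k+1}\|^2$ and $\sigma\|R(m^{k+1},n^k)\|^2$. The term $\langle x^k-\bar x, c(x^k)-c(\bar x)\rangle$ is made nonnegative by monotonicity of $\nabla h-Q$, which is convex since $h-\tfrac12\|\cdot\|_Q^2$ is convex by \eqref{tilde_h}. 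I will apply the Baillon--Haddad co-coercivity inequality with constant governed by $P^k_x-Q\preceq\frac{2\alpha}{\sigma}I$. This gives $\langle x^k-\bar x,c(x^k)-c(\bar x)\rangle\ge\frac{\sigma}{2\alpha}\|c(x^k)-c(\bar x)\|^2$. I will use that to cancel the leftover $\sigma\langle r^{k+1}, c(x^k)-c(\bar x)\rangle$ piece, at the price of an $\alpha\sigma\|R(m^{k+1},n^k)\|^2$-type term. This is exactly why only the fraction $(1-\alpha)$ of $\sigma\|R(m^{k+1},n^k)\|^2$ survives.

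\textbf{Step 3 (apply Lemma~\ref{lem:3.1}).} The surviving quantity $(1-\tau)\sigma\|r^{k+1}\|^2+(1-\alpha)\sigma\|R(m^{k+1},n^k)\|^2+2(1-\alpha)\langle d_n^{k-1}-d_n^k,\Delta_n^k\rangle$ is bounded from below by \eqref{5.1}. This produces the $(1-\alpha)\gamma\sigma\|r\|^2$ and $(1-\alpha)\|\Delta_n\|_{\mathcal{T}}^2$ telescoping parts of $\phi_k$, the term $\beta\sigma\|r^{k+1}\|^2$, and the $\min\{\tau,1+\tau-\tau^2\}(1-\alpha)\sigma\|\mathcal{G}^*\Delta_n^k\|^2$ term. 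Combined with $\|\Delta_n^k\|_{\mathcal{T}}^2$ from Step 2, that last term gives $\|\Delta_n^k\|_{\mathcal{A}}^2$. Rearranging yields \eqref{5.6}.

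\textbf{Main obstacle.} I expect Step 2 to be the hardest: controlling the drift $c(x^k)$ versus $c(x^{k+1})$. Specifically, one must show that the co-coercivity bound under $P_x^k-Q\preceq\frac{2\alpha}{\sigma}I$ exactly compensates the $\alpha$-portion of $\sigma\|R\|^2$. I would first try Young's inequality with weight $\alpha$ on $\langle r^{k+1}+\mathcal{G}^*\Delta_n^k,\, c(x^k)-c(\bar x)\rangle$, and adjust the split if the constants do not close.
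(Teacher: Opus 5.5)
Your outline matches the paper's proof: subgradient inequalities for $p$ (with the $\widetilde Q$-term from $f$) and $q$ against the KKT point, the identity $\mathcal{F}^*(m^{k+1}-\bar m)+\mathcal{G}^*(n^{k+1}-\bar n)=r^{k+1}+c(x^k)-c(\bar x)$, polarization, a Young split with weight $\alpha$, and then Lemma~\ref{lem:3.1}. The constants do close as you hope; this is \eqref{5.20}. \textbf{The gap is in how you connect the hypothesis to the one inequality that needs it.}

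You define $P_x^k$ through the segment $[x^k,x^{k+1}]$ to control a drift $c(x^{k+1})-c(x^k)$. That quantity never appears. $c(x^k)$ is absorbed into $\tilde x^{k+1}$ in \eqref{5.3}, and both $\sigma r^{k+1}=\tilde x^{k+1}-\tilde x^k+(1-\tau)\sigma r^k$ and $R(m^{k+1},n^k)=r^{k+1}+\mathcal{G}^*\Delta_n^k$ follow directly from the definitions. What Step 2 actually needs is
\[
\langle x^k-\bar x,\,c(x^k)-c(\bar x)\rangle\ge\tfrac{\sigma}{2\alpha}\|c(x^k)-c(\bar x)\|^2 .
\]
This concerns the pair $(x^k,\bar x)$, and a curvature bound on $[x^k,x^{k+1}]$ says nothing about it. So the $\frac{\sigma}{2\alpha}\|c(x^k)-c(\bar x)\|^2$ term produced by your Young split cannot be absorbed. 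Invoking Baillon--Haddad does not fix this. Co-coercivity as a black box requires $\nabla h-Q$ to be $\frac{2\alpha}{\sigma}$-Lipschitz globally, since its proof evaluates the function at points off the segment. That is a strictly stronger assumption than the proposition makes.

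The fix is the paper's construction. Take $P_x^k$ from Clarke's mean value theorem on $[\bar x,x^k]$, as in \eqref{5.16}: $\nabla h(x^k)-\nabla h(\bar x)=P_x^k(x^k-\bar x)$, with $P_x^k$ self-adjoint and $Q\preceq P_x^k$. Then, with $x_e^k:=x^k-\bar x$ and $M:=P_x^k-Q$, you have $c(x^k)-c(\bar x)=Mx_e^k$. The assumption $0\preceq M\preceq\frac{2\alpha}{\sigma}I$ gives the operator inequality $M-\frac{\sigma}{2\alpha}M^2\succeq0$, because every $\lambda\in[0,2\alpha/\sigma]$ satisfies $\lambda-\frac{\sigma}{2\alpha}\lambda^2\ge0$. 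This yields your co-coercivity bound along this single segment, with no global Lipschitz property, and hence
\[
\langle x_e^k+\sigma R(m^{k+1},n^k),\,Mx_e^k\rangle\ \ge\ \|x_e^k\|^2_{M-\frac{\sigma}{2\alpha}M^2}-\frac{\alpha\sigma}{2}\|R(m^{k+1},n^k)\|^2\ \ge\ -\frac{\alpha\sigma}{2}\|R(m^{k+1},n^k)\|^2 .
\]
With this replacement, the rest of your plan goes through as written.
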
 
\begin{proof}
For any given $(m, n, x)$, define $m_{e}:=m-\bar{m},~n_{e}:=n-\bar{n}, ~x_{e}:=x-\bar{x}$. Note that
\begin{equation}\label{5.7}
\begin{aligned}
  x^{k}+\sigma R (m^{k+1}, n^{k} )=\tilde{x}^{k+1}+\sigma \mathcal{G}^{*} \Delta_n^k.
\end{aligned}
\end{equation}
From Step 3.1 of Algorithm \ref{alg:tsp-p}, we have 
\begin{equation*}
\begin{aligned}
  d_{m}^{k}-\mathcal{F} (\tilde{x}^{k+1}+\sigma \mathcal{G}^{*} \Delta_{n}^{k} )-\widetilde{Q} m^{k+1}+\mathcal{S} \Delta_{m}^{k} \in \partial p (m^{k+1} ).
\end{aligned}
\end{equation*}
Since the function $p$ is convex, we obtain
\begin{equation}\label{5.9}
\begin{aligned}
  p(\bar{m})+\langle d_{m}^{k}-\mathcal{F} (\tilde{x}^{k+1}+\sigma \mathcal{G}^{*} \Delta_{n}^k )-\widetilde{Q} m^{k+1}+\mathcal{S} \Delta_{m}^{k}, m_e^{k+1}\rangle \geq p (m^{k+1} ).
\end{aligned}
\end{equation}
Similarly, using \eqref{5.3} and the convexity of $q$, we have 
\begin{equation}\label{5.10}
 q(\bar{n})+\langle d_{n}^{k}+\hat{b}-\mathcal{G} \tilde{x}^{k+1}+\mathcal{T} \Delta_{n}^k, n_e^{k+1} \rangle \geq q(n^{k+1}).
\end{equation}
By the convexity of $f$, $g$, $p$ and $q$,
together with KKT conditions \eqref{KKT2}, this yields 
\begin{align}
&  p (m^{k+1} )-p(\bar{m})+ \langle\mathcal{F}\bar{x}, m_{e}^{k+1} \rangle \geq \frac{1}{2} \|\bar{m} \|_{\widetilde{Q}}^{2}-\frac{1}{2} \| m^{k+1} \|_{\widetilde{Q}}^{2},  \label{5.11a}\\
&  q (n^{k+1} )-q(\bar{n})+ \langle\mathcal{G}\bar{x}, n_{e}^{k+1} \rangle \geq \langle\hat{b}, n_{e}^{k+1} \rangle.\label{5.11b}
\end{align}
Summing \eqref{5.9}, \eqref{5.10},  \eqref{5.11a} and \eqref{5.11b} together, we get
\begin{equation}\label{5.13}
\begin{aligned}
&   \langle d_{m}^{k}, m_{e}^{k+1} \rangle +  \langle d_{n}^{k}, n_{e}^{k+1} \rangle -  \langle\tilde{x}_{e}^{k+1}, \mathcal{F}^{*} m_{e}^{k+1}+\mathcal{G}^{*} n_{e}^{k+1} \rangle\\
&  -\sigma \langle\mathcal{G}^{*} \Delta_{n}^{k}, \mathcal{F}^{*}m_{e}^{k+1} \rangle
+  \langle\Delta_{m}^{k}, m_{e}^{k+1} \rangle _{\mathcal{S}} +  \langle\Delta_{n}^{k}, n_{e}^{k+1} \rangle_{\mathcal{T}}\\
\geq ~&  \frac{1}{2}\|\bar{m}\|_{\widetilde{Q}}^{2}-\frac{1}{2} \|m^{k+1} \|_{\widetilde{Q}}^{2}+ \langle m^{k+1}, m_e^{k+1} \rangle_{\widetilde{Q}}\\
=~& \frac{1}{2} \|m_{e}^{k+1}\|_{\widetilde{Q}}^{2}.
\end{aligned}
\end{equation}

We next estimate the left-hand side of \eqref{5.13}.
Observe that
\begin{equation}\label{5.14}
\begin{aligned}
&   \langle m_{e}^{k+1}, \Delta_{m}^{k} \rangle_{\mathcal{S}}=\frac{1}{2} \|m_{e}^{k} \|_{\mathcal{S}}^{2}-\frac{1}{2} \|m_{e}^{k+1} \|_{\mathcal{S}}^{2}-\frac{1}{2} \|\Delta_{m}^{k} \|_{\mathcal{S}}^{2}, \\
& \langle n_{e}^{k+1}, \Delta_{n}^{k} \rangle_{\mathcal{T}}=\frac{1}{2} \|n_{e}^{k} \|_{\mathcal{T}}^{2}-\frac{1}{2} \|n_{e}^{k+1} \|_{\mathcal{T}}^{2}-\frac{1}{2} \|\Delta_{n}^{k} \|_{\mathcal{T}}^{2}.
\end{aligned}
\end{equation}
Using the definition of $r^{k+1}$ and the identity
$
\mathcal F^*\bar m+\mathcal G^*\bar n=\nabla h(\bar x)-Q\bar x,
$
we have
\begin{equation}\label{5.15}
r^{k+1}
=
\mathcal{F}^{*} m_{e}^{k+1}
+
\mathcal{G}^{*} n_e^{k+1}
+
\nabla h(\bar{x})-\nabla h (x^{k})
+
Q x_{e}^{k}.
\end{equation}
Since $\nabla h$ is Lipschitz continuous, Clarke's mean value theorem
\cite[Proposition 2.6.5]{clarke1990optimization} gives a self-adjoint
linear operator $P_x^k \in \operatorname{conv}\big(\partial^2 h([\bar x,x^k])\big)$ such that
\begin{equation}\label{5.16}
\nabla h(x^{k})-\nabla h(\bar{x}) = P_x^{k} x_e^{k},
\end{equation}
where $\operatorname{conv}\big(\partial^2 h([\bar x,x^k])\big)$ denotes the convex
hull of the generalized Hessians of $h$ at all points in $[\bar x,x^k]$. 
The quadratic lower-bound condition \eqref{tilde_h} allows this operator to be taken with \(Q\preceq P_x^k\). Substituting \eqref{5.16} into \eqref{5.15}, we obtain
\[
r^{k+1}
=
\mathcal F^*m_e^{k+1}
+
\mathcal G^*n_e^{k+1}
-
(P_x^k-Q)x_e^k.
\]
From \eqref{5.15} and \eqref{5.16}, we get
\begin{equation}\label{5.17}
\begin{aligned}
& \langle\tilde{x}_e^{k+1},\mathcal{F}^*m_e^{k+1}+\mathcal{G}^* n_e^{k+1}\rangle+\sigma \langle \mathcal{G}^* \Delta_n^{k},\mathcal{F}^*m_e^{k+1}\rangle\\
=~&  \langle\tilde{x}_{e}^{k+1}, r^{k+1}- (Q-P_{x}^{k} ) x_{e}^{k} \rangle+\sigma \langle \mathcal{G}^{*} \Delta_{n}^{k}, r^{k+1}-\mathcal{G}^{*} n_{e}^{k+1}- (Q-P_{x}^{k} ) x_{e}^{k} \rangle \\
=~&  \langle\tilde{x}_{e}^{k+1}, r^{k+1} \rangle+\sigma \langle \mathcal{G}^{*} \Delta_{n}^{k}, r^{k+1}-\mathcal{G}^{*} n_{e}^{k+1} \rangle - \langle\tilde{x}_{e}^{k+1}, (Q-P_{x}^{k} ) x_{e}^{k} \rangle\\
~&-\sigma \langle \mathcal{G}^{*} \Delta_{n}^{k}, (Q-P_{x}^{k} ) x_{e}^{k} \rangle.
\end{aligned}
\end{equation}
By the definition of $\tilde{x}^{k+1}$, we obtain
\begin{equation}\label{5.18}
\begin{aligned}
  \langle \tilde{x}_{e}^{k+1}, r^{k+1} \rangle & = \frac{1}{\tau \sigma} \langle x^{k+1}-x^{k}, x_{e}^{k} \rangle+\sigma \|r^{k+1} \|^{2} \\
&   =\frac{1}{2 \tau \sigma} ( \|x_{e}^{k+1} \|^{2}- \|x_{e}^{k} \|^{2} )+\frac{(2-\tau) \sigma}{2} \|r^{k+1} \|^{2},
\end{aligned}
\end{equation}
where the last equality follows from the relation $x^{k+1}-x^k=\tau \sigma r^{k+1}$.  
Note also that
\begin{equation}\label{5.19}
\begin{aligned}
&  \langle \mathcal{G}^{*} \Delta_{n}^{k}, r^{k+1}-\mathcal{G}^{*} n_{e}^{k+1} \rangle \\
=~&  \langle\mathcal{G}^{*} \Delta_{n}^{k}, r^{k+1} \rangle- \langle\mathcal{G}^{*} n_{e}^{k}-\mathcal{G}^{*} n_{e}^{k+1}, \mathcal{G}^{*} n_{e}^{k+1} \rangle\\
=~&  \frac{1}{2} (\|\mathcal{G}^{*} (n^{k}-n^{k+1} )+\mathcal{F}^{*} m^{k+1}+\mathcal{G}^{*} n^{k+1}-c (x^{k} ) \|^{2}- \|\mathcal{G}^{*} n_{e}^{k}-\mathcal{G}^{*} n_{e}^{k+1} \|^{2}\!-\! \|r^{k+1} \|^{2} )\\
~&  -\frac{1}{2} ( \|\mathcal{G}^{*} n_{e}^{k} \|^{2}- \|\mathcal{G}^{*} n_{e}^{k}-\mathcal{G}^{*} n_{e}^{k+1}\|^{2}- \|\mathcal{G}^{*} n_{e}^{k+1} \|^{2} ) \\
=~&  \frac{1}{2} ( \|\mathcal{G}^{*} n_{e}^{k+1} \|^{2}- \|\mathcal{G}^{*} n_{e}^{k} \|^{2}+ \|R (m^{k+1}, n^{k} ) \|^{2}- \|r^{k+1} \|^{2} ).
\end{aligned}
\end{equation}
By using \eqref{5.7}, we achieve
\begin{equation}\label{5.20}
\begin{aligned}
&~ - \langle\tilde{x}_{e}^{k+1}, (Q-P_{x}^{k} ) x_{e}^{k} \rangle-\sigma \langle \mathcal{G}^{*} \Delta_{n}^{k}, (Q-P_{x}^{k} ) x_{e}^{k} \rangle\\
=&~ - \langle x_{e}^{k}+\sigma R (m^{k+1}, n^{k} ), (Q-P_{x}^{k} ) x_{e}^{k} \rangle \\
\geq &~  \|x_{e}^{k}\|_{P_x^{k}-Q}^{2}-\frac{\alpha \sigma}{2} \|R (m^{k+1}, n^{k}) \|^{2}-\frac{\sigma}{2 \alpha} \|x_{e}^{k} \|_{(Q-P_x^k)^{2}}^{2} \\
 =&~   \|x_{e}^{k} \|_{P_x^k-Q-\frac{\sigma}{2 \alpha} ( P_{x}^{k}-Q )^{2}}^{2}-\frac{\alpha \sigma}{2} \|R (m^{k+1}, n^{k} ) \|^{2}\\
\geq &~  -\frac{\alpha \sigma}{2} \|R (m^{k+1}, n^{k} ) \|^{2},
\end{aligned}
\end{equation}
where the last inequality follows from
$0 \preceq Q \preceq P_x^{k} \preceq Q+\frac{2\alpha}{\sigma}I$, 
which is a consequence of \(Q\preceq P_x^k\) and the assumption
\(P_x^k-Q\preceq \frac{2\alpha}{\sigma}I\).
Substituting \eqref{5.18}, \eqref{5.19}, and \eqref{5.20} into \eqref{5.17} gives\begin{equation}\label{5.21}
\begin{aligned}
& \langle \tilde{x}_e^{k+1},\mathcal{F}^*m_e^{k+1}+\mathcal{G}^* n_e^{k+1} \rangle+\sigma \langle \mathcal{G}^* \Delta_n^{k},\mathcal{F}^*m_e^{k+1}\rangle\\
\geq~ &  \frac{1}{2 \tau \sigma} ( \|x_{e}^{k+1} \|^{2}- \|x_{e}^{k} \|^{2} )
+\frac{\sigma}{2} ( \|\mathcal{G}^{*}n_{e}^{k+1} \|^{2}- \|\mathcal{G}^{*} n_{e}^{k} \|^{2} )
+\frac{(1-\tau) \sigma}{2} \|r^{k+1} \|^{2} \\
~&  +\frac{(1-\alpha) \sigma}{2} \|R (m^{k+1}, n^{k} ) \|^{2}.
\end{aligned}
\end{equation}

By substituting \eqref{5.14} and \eqref{5.21}  into  \eqref{5.13}, it follows that 
\begin{equation*}
\begin{aligned}
&~  \langle d_{m}^{k}, m_{e}^{k+1} \rangle+ \langle d_{n}^k, n_{e}^{k+1} \rangle
+\frac{1}{2 \tau\sigma} ( \|x_{e}^{k} \|^{2}- \|x_{e}^{k+1} \|^{2} )
+\frac{\sigma}{2} ( \|\mathcal{G}^{*}n_{e}^{k} \|^{2}
- \| \mathcal{G}^{*}n_{e}^{k+1} \|^{2} )\\
&~ +\frac{1}{2} ( \|m_{e}^{k} \|_{\widetilde{Q}+\mathcal{S}}^{2}+ \|n_{e}^{k} \|_{\mathcal{T}}^{2} )-\frac{1}{2} ( \|m_{e}^{k+1} \|_{\widetilde{Q}+\mathcal{S}}^{2}+ \|n_{e}^{k+1} \|_{\mathcal{T}}^{2} ) \\
\geq &~ \frac{1}{2} \|\Delta_{m}^{k} \|_{\mathcal{S}}^{2}+\frac{1}{2} \|\Delta_{n}^{k} \|_{\mathcal{T}}^{2}+\frac{(1-\tau) \sigma}{2} \|r^{k+1} \|^{2}+\frac{(1-\alpha) \sigma}{2} \|R (m^{k+1}, n^{k} ) \|^{2}.
\end{aligned}
\end{equation*}
Combining Lemma \ref{lem:3.1} with the definitions of $\phi_{k}(\varpi)$, $\gamma$, and $\mathcal{A}$, we conclude that \eqref{5.6} holds for all $k \geq 1$. This completes the proof.
 \end{proof}

The next proposition bounds the errors between the approximate and exact solutions of the subproblems. Since our inexactness criterion follows the imsPADMM framework in \cite{chen2017efficient}, the proof is analogous to that of \cite[Proposition 3.1]{chen2017efficient} and is omitted.

\begin{proposition}\label{inexact}
Let $ \{\varpi^k \}$ be the sequence generated by the SL-MD-isPADMM in Algorithm \ref{alg:tsp-p}, and $ \{\bar{m}^k \}$, $ \{\bar{n}^k \}$ be the sequence defined in \eqref{m} and \eqref{n}. Then, for any $k \geq 0$, we have $\|m^{k+1}-\bar{m}^{k+1}\|_{\mathcal{M}} \leq \varepsilon_{k}$ and $\|n^{k+1}-\bar{n}^{k+1}\|_{\mathcal{N}} \leq (1+\sigma\|\mathcal{N}^{-\frac{1}{2}}\mathcal{G}\mathcal{F}^*\mathcal{M}^{-\frac{1}{2}}\|)\varepsilon_{k}$.
\end{proposition}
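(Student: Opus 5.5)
The plan is to compare the optimality conditions of the exact subproblems \eqref{m} and \eqref{n} with the perturbed conditions satisfied by $m^{k+1}$ and $n^{k+1}$. We then use monotonicity of the subdifferentials, together with the strong convexity provided by $\mathcal{M}\succ0$ and $\mathcal{N}\succ0$, to turn the error vectors into distance bounds in the $\mathcal{M}$- and $\mathcal{N}$-norms.

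\emph{The $m$-block.} Write $\Psi_m(m):=p(m)+f(m;x^k)+\frac{\sigma}{2}\|\mathcal{F}^*m+\mathcal{G}^*n^k-c(x^k)+\sigma^{-1}x^k\|^2+\frac12\|m-m^k\|_{\mathcal{S}}^2$. The quadratic part of $\Psi_m$ has Hessian $\widetilde{Q}+\sigma\mathcal{F}\mathcal{F}^*+\mathcal{S}=\mathcal{M}$; the remaining parts, $p$ and $\delta_{\mathcal{W}_t}$ inside $f$, are convex.

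\begin{enumerate}
\item Note that $0\in\partial\Psi_m(\bar m^{k+1})$ and $d_m^k\in\partial\Psi_m(m^{k+1})$.
\item Since $\Psi_m-\frac12\|\cdot\|_{\mathcal{M}}^2$ is convex, strong monotonicity of $\partial\Psi_m$ gives
\[
\langle d_m^k,\,m^{k+1}-\bar m^{k+1}\rangle\ \ge\ \|m^{k+1}-\bar m^{k+1}\|_{\mathcal{M}}^2 .
\]
\item Write $\langle d_m^k,\,\cdot\,\rangle=\langle\mathcal{M}^{-1/2}d_m^k,\,\mathcal{M}^{1/2}(\cdot)\rangle$ and apply Cauchy--Schwarz. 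This yields $\|m^{k+1}-\bar m^{k+1}\|_{\mathcal{M}}\le\|\mathcal{M}^{-1/2}d_m^k\|\le\varepsilon_k$.
\end{enumerate}

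\emph{The $n$-block.} The extra difficulty is that $\bar n^{k+1}$ is defined with $\bar m^{k+1}$, while the inexactness condition for $n^{k+1}$ is stated at $m^{k+1}$.

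\begin{enumerate}
\item Introduce $\Psi_n(n;m):=q(n)+g(n)+\frac{\sigma}{2}\|\mathcal{F}^*m+\mathcal{G}^*n-c(x^k)+\sigma^{-1}x^k\|^2+\frac12\|n-n^k\|_{\mathcal{T}}^2$. Its quadratic Hessian in $n$ is $\sigma\mathcal{G}\mathcal{G}^*+\mathcal{T}=\mathcal{N}$; $g$ is linear and $q$ is convex.
\item Compute $\partial_n\Psi_n(n;m)-\partial_n\Psi_n(n;m')=\sigma\mathcal{G}\mathcal{F}^*(m-m')$. Hence $d_n^k\in\partial_n\Psi_n(n^{k+1};m^{k+1})$ gives
\[
d_n^k-\sigma\mathcal{G}\mathcal{F}^*(m^{k+1}-\bar m^{k+1})\in\partial_n\Psi_n(n^{k+1};\bar m^{k+1}),
\]
while $0\in\partial_n\Psi_n(\bar n^{k+1};\bar m^{k+1})$.
\item Apply the same strong-monotonicity and Cauchy--Schwarz argument in the $\mathcal{N}$-norm to get
\[
\|n^{k+1}-\bar n^{k+1}\|_{\mathcal{N}}\le\|\mathcal{N}^{-1/2}d_n^k\|+\sigma\|\mathcal{N}^{-1/2}\mathcal{G}\mathcal{F}^*(m^{k+1}-\bar m^{k+1})\|.
\]
\item Factor the last term as $\mathcal{N}^{-1/2}\mathcal{G}\mathcal{F}^*\mathcal{M}^{-1/2}\cdot\mathcal{M}^{1/2}(m^{k+1}-\bar m^{k+1})$, bound it by the operator norm times $\|m^{k+1}-\bar m^{k+1}\|_{\mathcal{M}}$, and insert the $m$-block bound. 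This gives the stated factor $(1+\sigma\|\mathcal{N}^{-1/2}\mathcal{G}\mathcal{F}^*\mathcal{M}^{-1/2}\|)\varepsilon_k$.
\end{enumerate}

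\emph{Main obstacle.} The delicate point is the strong monotonicity step. It requires identifying exactly the quadratic part of each subproblem objective and checking that what remains is convex: $p$ and $q$ are conjugates of closed proper convex functions, $\delta_{\mathcal{W}_t}$ is the indicator of a subspace, and $g$ is linear. This holds here, so the bounds are sharp with constants $\mathcal{M}$ and $\mathcal{N}$. The positive definiteness $\mathcal{M},\mathcal{N}\succ0$ comes from the sGS decomposition theorem cited above and is what allows $\mathcal{M}^{-1/2}$ and $\mathcal{N}^{-1/2}$ to be used. The remaining steps are the routine manipulations of \cite[Proposition 3.1]{chen2017efficient}.
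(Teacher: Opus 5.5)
Your proposal is correct and follows essentially the argument the paper defers to, namely the proof of \cite[Proposition 3.1]{chen2017efficient}: you compare the exact optimality condition at $\bar m^{k+1}$ with the perturbed one at $m^{k+1}$, use monotonicity of the convex nonsmooth part ($p$ with $\delta_{\mathcal{W}_t}$, respectively $q$) together with the quadratic Hessians $\widetilde{Q}+\sigma\mathcal{F}\mathcal{F}^*+\mathcal{S}=\mathcal{M}$ and $\sigma\mathcal{G}\mathcal{G}^*+\mathcal{T}=\mathcal{N}$ to obtain $\|m^{k+1}-\bar m^{k+1}\|_{\mathcal{M}}\le\|\mathcal{M}^{-\frac12}d_m^k\|$, and treat the $m^{k+1}$ versus $\bar m^{k+1}$ mismatch in the $n$-subproblem as the shift $\sigma\mathcal{G}\mathcal{F}^*(m^{k+1}-\bar m^{k+1})$. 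That shift is then bounded through $\|\mathcal{N}^{-\frac12}\mathcal{G}\mathcal{F}^*\mathcal{M}^{-\frac12}\|$ and the $m$-block estimate, which gives exactly the stated constants.
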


We are now ready to state the convergence theorem for the SL-MD-isPADMM in Algorithm \ref{alg:tsp-p}.

\begin{theorem}\label{thm:conv}
Suppose the solution set $\bar{\Omega}$ of the KKT conditions \eqref{KKT} associated with problem \eqref{P1} is nonempty and $P_{x}^{k}-Q \preceq \frac{2 \alpha}{\sigma} I$ with $\alpha \in (0,1)$. Let $ \{\varpi^{k} \}$ be generated by the SL-MD-isPADMM in Algorithm \ref{alg:tsp-p}. Then, the sequence $ \{\varpi^{k} \}$ converges to a point in $\bar{\Omega}$. 
\end{theorem}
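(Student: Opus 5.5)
The argument follows the standard inexact semi-proximal ADMM convergence proof of \cite[Theorem 5.1]{chen2017efficient}, with Proposition~\ref{prop:descent} as the descent inequality and Proposition~\ref{inexact} controlling the inexactness. It proceeds in three steps: boundedness of the iterates, vanishing of the residuals and increments, and identification of a KKT limit point followed by convergence of the whole sequence.

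\textbf{Step 1 (boundedness).} Fix $\bar\varpi\in\bar\Omega$. First, apply Proposition~\ref{prop:descent} to the exact iterates $(\bar m^{k+1},\bar n^{k+1},\bar x^{k+1})$, that is, with $d_m^k=d_n^k=0$, which gives $\bar\phi_{k+1}(\bar\varpi)\le\phi_k(\bar\varpi)$. Next, use Proposition~\ref{inexact} to bound the gap $|\sqrt{\phi_{k+1}(\bar\varpi)}-\sqrt{\bar\phi_{k+1}(\bar\varpi)}|\le c\,\varepsilon_k$ for a constant $c$. For the $x$-part, use $x^{k+1}-\bar x^{k+1}=\tau\sigma(\mathcal F^*(m^{k+1}-\bar m^{k+1})+\mathcal G^*(n^{k+1}-\bar n^{k+1}))$, which is controlled through $\mathcal M,\mathcal N\succ0$. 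This yields
\[
\sqrt{\phi_{k+1}(\bar\varpi)}\le\sqrt{\phi_k(\bar\varpi)}+c\,\varepsilon_k .
\]
Since $\{\varepsilon_k\}$ is summable, $\{\phi_k(\bar\varpi)\}$ is bounded. Because $\mathcal N\succ0$, this bounds $\{n^k\}$ and $\{x^k\}$. The norm $\widetilde Q+\mathcal S$ alone may not control $m$, so bound $m^k$ separately: $\|\mathcal F^*m_e^{k}\|$ is bounded through $r^k$, $\mathcal G^*n_e^k$ and $x_e^{k-1}$ by \eqref{5.15}, and $\mathcal M=\widetilde Q+\sigma\mathcal F\mathcal F^*+\mathcal S\succ0$ then bounds $m^k$.

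\textbf{Step 2 (summability).} Return to \eqref{5.6} for the actual iterates. Using boundedness of the iterates and the bounds $\|\mathcal M^{-1/2}d_m^k\|,\|\mathcal N^{-1/2}d_n^k\|\le\varepsilon_k$, the error terms $\langle d_m^k,m^{k+1}-\bar m\rangle$, $\langle d_n^k,n^{k+1}-\bar n\rangle$ and $\langle d_n^k-d_n^{k-1},\Delta_n^k\rangle$ are all $O(\varepsilon_k+\varepsilon_{k-1})$ and hence summable. Telescoping then gives
\[
\sum_k\Big(\|\Delta_m^k\|_{\mathcal S}^2+\|\Delta_n^k\|_{\mathcal A}^2+\beta\sigma\|r^{k+1}\|^2\Big)<\infty .
\]
Since $\beta>0$, this gives $r^k\to0$. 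Since $\mathcal A\succeq\mathcal T+c'\sigma\mathcal G\mathcal G^*\succ0$, it gives $\Delta_n^k\to0$, and therefore $R(m^{k+1},n^k)\to0$ and $x^{k+1}-x^k\to0$. The term $\|\Delta_m^k\|_{\widetilde Q+\sigma\mathcal F\mathcal F^*}$ is controlled through $\mathcal F^*\Delta_m^k=\mathcal F^*\Delta_m^k$ combined with $r^{k+1}-r^k$, $\mathcal G^*\Delta_n^k$ and $c(x^k)-c(x^{k-1})$; together with $\mathcal S$ this gives $\Delta_m^k\to0$.

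\textbf{Step 3 (limit point and convergence).} Extract a convergent subsequence $\varpi^{k_j}\to\varpi^\infty$. Pass to the limit in the inclusions for Steps 3.1 and 3.2, namely \eqref{5.3} and its $m$-analogue, using closedness of the graphs of $\partial p$ and $\partial q$, with $d^k\to0$ and $\Delta^k\to0$. Pass to the limit in $r^{k}\to0$, using continuity of $\nabla h$ so that $c(x^{k})\to\nabla h(x^\infty)-Qx^\infty$. Together these show that $\varpi^\infty$ satisfies \eqref{KKT2}, so $\varpi^\infty\in\bar\Omega$.

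Now take $\bar\varpi=\varpi^\infty$ in Step 1. The quasi-Fejér inequality makes $\phi_k(\varpi^\infty)$ convergent. Its limit along $k_j$ is zero, because the $r$ and $\Delta_n$ terms vanish. Hence $\phi_k(\varpi^\infty)\to0$, which gives $x^k\to x^\infty$ and $n^k\to n^\infty$, and $m^k\to m^\infty$ follows as in Step 1.

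The main obstacle is Step 1's treatment of the $m$-block. The descent quantity $\phi_k$ only measures $m$ in the possibly degenerate norm $\widetilde Q+\mathcal S$, so boundedness and convergence of $m^k$ must be recovered indirectly through the residual identity \eqref{5.15} and positive definiteness of $\mathcal M$. This step also has to handle the fact that the reference $c(x^k)$ moves with $x^k$. The hypothesis $P_x^k-Q\preceq\frac{2\alpha}{\sigma}I$ together with Lipschitz continuity of $\nabla h$ makes these perturbation terms bounded by multiples of $\|x_e^k\|$, which Step 1 already controls.
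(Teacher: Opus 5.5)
Your architecture matches the paper's proof. You obtain a quasi-Fej\'er recursion for $\sqrt{\phi_k(\bar\varpi)}$ by applying Proposition~\ref{prop:descent} to the exact iterates and correcting with Proposition~\ref{inexact}. You recover boundedness of $m^k$ through $r^k$ and $\mathcal{M}\succ0$, show the residuals vanish, identify a KKT cluster point, and finally set $\bar\varpi=\varpi^\infty$.

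There is, however, a concrete error in Step~1. Replacing $(m^{k+1},n^{k+1})$ by $(\bar m^{k+1},\bar n^{k+1})$ in \eqref{5.6} removes $d_m^k$ and $d_n^k$, but not $d_n^{k-1}$. That error belongs to the previous, genuinely inexact, $n$-step, and it enters through the monotonicity argument in Lemma~\ref{lem:3.1}, which compares the inclusions at steps $k-1$ and $k$. What you actually get is
\[
-2(1-\alpha)\langle d_n^{k-1},\,n^k-\bar n^{k+1}\rangle+\|n^k-\bar n^{k+1}\|_{\mathcal{A}}^2+\|m^k-\bar m^{k+1}\|_{\mathcal{S}}^2+\beta\sigma\|\bar r^{k+1}\|^2\le\phi_k(\bar\varpi)-\bar\phi_{k+1}(\bar\varpi).
\]
The cross term has no sign, so $\bar\phi_{k+1}(\bar\varpi)\le\phi_k(\bar\varpi)$ does not follow. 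You also cannot dismiss it as ``$O(\varepsilon_{k-1})$ times a bounded quantity'', because boundedness is exactly what Step~1 is supposed to establish.

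The fix is what the paper does in \eqref{5.22}--\eqref{5.24}. Use $\mathcal{A}\succ0$ to complete the square against $\|n^k-\bar n^{k+1}\|_{\mathcal{A}}^2$. This gives $\bar\phi_{k+1}(\bar\varpi)\le\phi_k(\bar\varpi)+(1-\alpha)^2\|d_n^{k-1}\|_{\mathcal{A}^{-1}}^2$. Your square-root recursion then acquires the extra term $(1-\alpha)\|\mathcal{A}^{-1/2}\mathcal{N}^{1/2}\|\varepsilon_{k-1}$, which is still summable. With this correction, Step~1 and the final quasi-Fej\'er argument with $\bar\varpi=\varpi^\infty$ go through.

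Your Step~2 takes a slightly different route from the paper, and it is valid. Once boundedness is known, you telescope \eqref{5.6} for the actual iterates and bound the three error inner products by $C(\varepsilon_k+\varepsilon_{k-1})$. This is a bit more direct than the paper's \eqref{5.29}, which telescopes the exact-iterate inequality \eqref{5.24} and controls $\phi_{k+1}-\bar\phi_{k+1}$ by $\|\xi^{k+1}-\bar\xi^{k+1}\|(\|\xi^{k+1}\|+\|\bar\xi^{k+1}\|)$.

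Your claim $\Delta_m^k\to0$ is not established by the argument you give. The identity for $r^k-r^{k+1}$ controls only $\mathcal{F}^*\Delta_m^k$, not the $\widetilde{Q}$-part. It is also unnecessary: the only $m$-increment appearing in the limiting inclusion \eqref{5.30} is $\mathcal{S}\Delta_m^k$, which tends to zero because $\|\Delta_m^k\|_{\mathcal{S}}^2$ is summable.
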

\begin{proof}
Let $\rho :=\min  \{\tau, 1+\tau-\tau^{2} \} \in (0,1]$. Then
$$
\mathcal{A}=\mathcal{T}+\rho(1-\alpha) \sigma \mathcal{GG}^{*}=\rho(1-\alpha) (\mathcal{T}+\sigma \mathcal{GG}^{*} )+(1-\rho(1-\alpha))\mathcal{T} \succ 0,
$$
and hence $\mathcal{A}^{-1}$ exists. Moreover,
\begin{equation}\label{5.22}
\begin{aligned}
&~ \|\Delta_{n}^{k} \|_{\mathcal{A}}^{2}+2(1-\alpha) \langle d_{n}^{k}-d_{n}^{k-1}, \Delta_{n}^{k} \rangle\\
=&~  \|\Delta_{n}^{k}+(1-\alpha) \mathcal{A}^{-1} (d_{n}^{k}-d_{n}^{k-1} ) \|_{\mathcal{A}}^{2}-(1-\alpha)^{2} \|d_{n}^{k}-d_{n}^{k-1} \|_{\mathcal{A}^{-1}}^{2}.
\end{aligned}
\end{equation}
Replacing $m^{k+1}$ and $n^{k+1}$ in \eqref{5.6} by $\bar{m}^{k+1}$ and $\bar{n}^{k+1}$, respectively, and using \eqref{5.22}, we obtain
\begin{equation}\label{5.24}
\begin{aligned}
&~  \phi_{k}(\bar{\varpi})-\bar{\phi}_{k+1}(\bar{\varpi})+(1-\alpha)^{2} \|d_n^{k-1} \|_{\mathcal{A}^{-1}}^{2}\\ 
\geq &~  \|n^{k}-\bar{n}^{k+1}-(1-\alpha) \mathcal{A}^{-1} d_n^{k-1} \|_{\mathcal{A}}^{2}+ \|m^{k}-\bar{m}^{k+1} \|_{\mathcal{S}}^{2}+\beta \sigma \|\bar{r}^{k+1} \|^{2}.
\end{aligned}
\end{equation}
Define $\xi^{k}:= \big( \frac{1}{\sqrt{\tau \sigma}}  x_{e}^{k}, \sqrt{\smash[b]{\widetilde{Q}}+\mathcal{S}}
 m_{e}^{k}, \sqrt{\mathcal{N}} n_{e}^{k}, \sqrt{(1-\alpha) \gamma \sigma} r^{k}, \sqrt{(1-\alpha) \mathcal{T}} \Delta_{n}^{k-1} \big)$ and $\bar{\xi}^{k}:= \big(\frac{1}{\sqrt{\tau \sigma}} \bar{x}_{e}^{k}$, $\sqrt{\smash[b]{\widetilde{Q}}+\mathcal{S}}
 \bar{m}_{e}^{k}, \sqrt{\mathcal{N}} \bar{n}_{e}^{k}, \sqrt{(1-\alpha) \gamma \sigma} \bar{r}^{k}, \sqrt{(1-\alpha) \mathcal{T}} (n^{k-1}-\bar{n}^{k} ) \big)$. Then $ \|\xi^{k} \|^{2}=\phi_{k}(\bar{\varpi})$ and $ \|\bar{\xi}^{k} \|^{2}=\bar{\phi}_{k}(\bar{\varpi})$. It follows from these identities and \eqref{5.24} that $ \|\bar{\xi}^{k+1} \|^{2} \leq \|\xi^{k} \|^{2}+(1-\alpha)^{2}\| \mathcal{A}^{-\frac{1}{2}} d_{n}^{k-1} \|^{2}$, hence  $ \|\bar{\xi}^{k+1} \| \leq  \|\xi^{k} \|+(1-\alpha) \|\mathcal{A}^{-\frac{1}{2}} d_{n}^{k-1} \|$. Consequently,\begin{equation}\label{5.25}
   \|\xi^{k+1} \| \leq \|\xi^{k} \|+(1-\alpha) \|\mathcal{A}^{-\frac{1}{2}}d_{n}^{k-1} \|+\|\bar{\xi}^{k+1}-\xi^{k+1} \|. 
\end{equation}
Now, we estimate $ \|\bar{\xi}^{k+1}-\xi^{k+1} \|$ in \eqref{5.25}. Since $\tau \in  ( 0, \frac{1+\sqrt{5-4\alpha}}{2}  ]  $ and $\alpha \in(0,1)$, we have
$$
\begin{aligned}
&~ \frac{1}{\tau \sigma} \|\bar{x}^{k+1}-x^{k+1} \|^{2}+(1-\alpha)\gamma \sigma \|\bar{r}^{k+1}-r^{k+1} \|^{2} \\
= &~   (\tau+(1-\alpha)\gamma) \sigma \|\bar{r}^{k+1}-r^{k+1} \|^{2} \\
\leq &~  3 \sigma \|\mathcal{F}^{*} (\bar{m}^{k+1}-m^{k+1} )+\mathcal{G}^{*} (\bar{n}^{k+1}-n^{k+1} ) \|^{2} \\
\leq &~  6 \|\bar{m}^{k+1}-m^{k+1} \|_{\sigma \mathcal{FF}^{*}}^{2}+6 \|\bar{n}^{k+1}-n^{k+1} \|_{\sigma \mathcal{GG}^{*}}^{2}.
\end{aligned}
$$
Together with Proposition \ref{inexact}, this implies
\begin{equation}\label{5.27}
\begin{aligned}
   \|\bar{\xi}^{k+1}-\xi^{k+1} \|^{2}
\leq &~  \|\bar{m}^{k+1}-m^{k+1} \|_{\widetilde{Q}+\mathcal{S}}^{2}+ \|\bar{n}^{k+1}-n^{k+1} \|_{\mathcal{N}}^{2}+ \|n^{k+1}-\bar{n}^{k+1} \|^{2}_{(1-\alpha) \mathcal{T}}\\
&~  +6 \|\bar{m}^{k+1}-m^{k+1} \|_{\sigma \mathcal{FF}^{*}}^{2}+6 \|\bar{n}^{k+1}-n^{k+1}\|_{\sigma \mathcal{GG}^{*}}^{2}\\
\leq &~  7 ( \|\bar{m}^{k+1}-m^{k+1} \|_{\mathcal{M}}^{2}+ \|\bar{n}^{k+1}-n^{k+1} \|_{\mathcal{N}}^{2} )=\kappa^{2} \varepsilon_{k}^{2},
\end{aligned}
\end{equation}
where 
$\kappa:=\sqrt{\smash[b]{7 \bigl(1+ (1+\sigma \|\mathcal{N}^{-\frac{1}{2}}\mathcal{GF}^{*}\mathcal{M}^{-\frac{1}{2}}\| )^{2} \bigr)}}.$ 
From Algorithm \ref{alg:tsp-p}, we have that
$
\|\mathcal{A}^{-\frac{1}{2}}d_{n}^{k}\| \leq \| \mathcal{A}^{-\frac{1}{2}} \mathcal{N}^{\frac{1}{2}} \| \varepsilon_{k}.
$
Combining above inequalities with \eqref{5.25}, we obtain
\begin{equation}\label{5.28}
\begin{aligned}
\|\xi^{k+1}\|
&\leq \|\xi^{k}\|+\kappa \varepsilon_{k}+(1-\alpha)\|\mathcal{A}^{-\frac{1}{2}}\mathcal{N}^{\frac{1}{2}}\|\varepsilon_{k-1} \leq \|\xi^{1}\|+\bigl(\kappa+(1-\alpha)\|\mathcal{A}^{-\frac{1}{2}}\mathcal{N}^{\frac{1}{2}}\|\bigr)\mathcal{E},
\end{aligned}
\end{equation}
where
$
\mathcal{E}:=\sum_{k=0}^{\infty}\varepsilon_k$ and
$\mathcal{E}':=\sum_{k=1}^{\infty}\varepsilon_k^2$.
Therefore, $\{\xi^k\}$ is bounded. It then follows from \eqref{5.27} that $\{\bar{\xi}^k\}$ is bounded as well. By the definition of $\xi^k$, the sequences $\{n^k\}$, $\{x^k\}$, $\{r^k\}$ and $\{\sqrt{\smash[b]{\widetilde{Q}}+\mathcal{S}}\,m^k\}$ are all bounded. Moreover, by the definition of $r^k$, the sequence $\{\mathcal{F}^*m^k\}$ is bounded. Together with $\mathcal{M} = \widetilde{Q}+\mathcal{S} + \sigma\mathcal{FF}^*\succ 0$, this further implies that $\{m^k\}$ is bounded.
Next, by \eqref{5.24}, \eqref{5.27} and \eqref{5.28}, we derive
\begin{equation}\label{5.29}
\begin{aligned}
&~  \sum_{k=1}^{\infty} ( \|\bar{m}^{k+1}-m^{k} \|_{\mathcal{S}}^{2}+\beta \sigma\|\bar{r}^{k+1} \|^{2}+ \|\bar{n}^{k+1}-n^{k}+(1-\alpha) \mathcal{A}^{-1}d_n^{k-1} \|_{\mathcal{A}}^2 )\\
\leq &~  \sum_{k=1}^{\infty} (\phi_{k}(\bar{\varpi})-\phi_{k+1}(\bar{\varpi})+\phi_{k+1}(\bar{\varpi})-\bar{\phi}_{k+1}(\bar{\varpi})+(1-\alpha)^{2} \|d_{n}^{k-1} \|_{\mathcal{A}^{-1}}^{2} )\\
\leq &~  \phi_{1}(\bar{\varpi})+\sum_{k=1}^{\infty} \|\xi^{k+1}-\bar{\xi}^{k+1} \|( \|\xi^{k+1}\|+ \|\bar{\xi}^{k+1} \| )+ \|\mathcal{A}^{-\frac{1}{2}} \mathcal{N}^{\frac{1}{2}} \|^{2} \mathcal{E}'\\
\leq
&~  \phi_{1}(\bar{\varpi})+ \|\mathcal{A}^{-\frac{1}{2}} \mathcal{N}^{\frac{1}{2}} \|^{2} \mathcal{E}^{\prime}+\kappa \max _{k \geq 1} \{ \|\xi^{k+1} \|+ \|\bar{\xi}^{k+1} \| \} \mathcal{E}<\infty,
\end{aligned}
\end{equation}
where the second inequality is because $\phi_{k+1}(\bar{\varpi})-\bar{\phi}_{k+1}(\bar{\varpi})= \|\xi^{k+1}\|^2-\|\bar{\xi}^{k+1} \|^2
\leq \|\xi^{k+1}-\bar{\xi}^{k+1} \| ( \|\xi^{k+1} \|+ \|\bar{\xi}^{k+1} \| )$. Hence, \eqref{5.29} implies that
$\|\bar{m}^{k+1}-m^{k} \|_{\mathcal{S}}^{2}  \rightarrow 0$, $ \| \bar{n}^{k+1}-n^{k}+(1-\alpha) \mathcal{A}^{-1} d_n^{k-1} \|_{\mathcal{A}}^{2}  \rightarrow 0$ and $  \|\bar{r}^{k+1}\|^{2} \rightarrow 0$ as $k \rightarrow \infty$. Therefore, $ \mathcal{S}(\bar{m}^{k+1}-m^{k})  \rightarrow 0$.
Since $\mathcal A\succ0$, we further obtain $ \bar{n}^{k+1}-n^{k}  \rightarrow 0$. By  Proposition \ref{inexact}, we also have $ \bar{m}^{k}-m^{k}  \rightarrow 0$ and $ \bar{n}^{k}-n^{k}  \rightarrow 0$ . Consequently, $ \bar{r}^{k}-r^{k}  \rightarrow 0$ and $ r^{k}  \rightarrow 0$, and hence $\mathcal{S}\Delta_{m}^{k}  \rightarrow 0$ and $ \Delta_{n}^{k}  \rightarrow 0$ as $k\to\infty$. Since $\{(m^{k+1}, n^{k+1}$, $x^{k+1})\}$ is bounded, there exists a subsequence $\{(m^{k_i+1}, n^{k_i+1}, x^{k_i+1})\}$ converging to $(m^{\infty}, n^{\infty}, x^{\infty})$.
From Steps 3.1 and 3.2 of Algorithm \ref{alg:tsp-p}, together with the definitions of 
$\tilde{x}^{k+1}$ and $x^{k+1}$, we have, for any $k \geq 0$,
\begin{equation}\label{5.30}
\begin{cases}
d_{m}^{k}-(\tau-1) \sigma \mathcal{F} r^{k+1}-\sigma \mathcal{FG}^{*} \Delta_{n}^{k}+\mathcal{S} \Delta_m^k \in \partial p (m^{k+1} )+\partial_m f (m^{k+1} )+\mathcal{F} x^{k+1},\\
d_{n}^{k}-(\tau-1) \sigma \mathcal{G} r^{k+1}+\mathcal{T} \Delta_{n}^{k} \in \partial q (n^{k+1} )+\nabla g (n^{k+1} )+\mathcal{G} x^{k+1}.
\end{cases}
\end{equation}
Taking limits along the subsequence $\{k_i\}$ as $i \rightarrow \infty$ in \eqref {5.30} yields
$$
0 \in \partial p (m^{\infty} )+\partial_m f (m^{\infty} )+\mathcal{F} x^{\infty}, \quad 0\in \partial q (n^{\infty} )+\nabla g (n^{\infty} )+\mathcal{G} x^{\infty}.
$$
Moreover, since $r^k \to 0$ and $x^{k+1}-x^k=\tau\sigma r^{k+1},$
we have $x^{k_i}-x^{k_i+1}\to 0$. Together with $x^{k_i+1}\to x^\infty$, this implies that $x^{k_i}\to x^\infty$. Hence, by
$r^{k_i+1}=\mathcal{F}^*m^{k_i+1}+\mathcal{G}^*n^{k_i+1}-c(x^{k_i}) \to 0
$ and the definition $c(x^k)$, we obtain
\(
\mathcal{F}^*m^{\infty}+\mathcal{G}^*n^{\infty}
=
\nabla h(x^{\infty})-Qx^{\infty}.
\)
Therefore, $(m^{\infty},n^{\infty},x^{\infty})$ satisfies the KKT system \eqref{KKT2}. Since \eqref{KKT} and \eqref{KKT2} are equivalent, we conclude that $(m^{\infty}, n^{\infty}, x^{\infty}) \in \bar{\Omega}.$

We next show that the whole sequence converges.  
Since $r^k \to 0$, $\Delta_n^k \to 0$ and subsequence $(m^{k_i+1},n^{k_i+1},x^{k_i+1}) \to (m^{\infty},n^{\infty},x^{\infty})\in\bar{\Omega}$, it follows from the definition of $\xi^k$ that $\|\xi^{k_i+1}\| \to 0$ as $i \to \infty$. 
Moreover, by \eqref{5.25} and the estimates below it, there exists a summable sequence $\{\eta_k\}$ such that
$\|\xi^{k+1}\| \le \|\xi^k\|+\eta_k .$ 
Hence $\{\|\xi^k\|\}$ is convergent. Since a subsequence $\{\|\xi^{k_i+1}\|\}$ converges to $0$, we must have
$\lim_{k\to\infty}\|\xi^k\|=0.$
 By the definition of $ \{\xi^{k} \}$, we conclude that $\lim _{k\rightarrow \infty} x^{k}=x^{\infty}$, $\lim_{k \rightarrow \infty} n^{k}=n^{\infty}$, and $\lim_{k \rightarrow \infty}  (\widetilde{Q}+\mathcal{S}) m^{k}=(\widetilde{Q}+\mathcal{S}) m^{\infty}$. Given that $\lim _{k \rightarrow \infty} r^{k}=0$, we have $ \mathcal{F}^{*} m^{k+1}  \rightarrow \mathcal{F}^{*} m^{\infty}$ as $k \rightarrow \infty$, which, together with $\mathcal{M} \succ 0$ implies that $\lim _{k \rightarrow \infty} m^{k}=m^{\infty}$. This completes the proof.

\end{proof}

We finally give the convergence theorem for the general MSP problem \eqref{MSP}. This result is obtained by carrying out the same proof pattern after replacing the three-stage grouping by the corresponding (T)-stage grouping, and it makes explicit that the convergence guarantee applies to Algorithm~\ref{alg:msp}.

\begin{theorem}
\label{thm:conv-msp}
Suppose that the solution set $\bar{\Omega}_T$ of the KKT system associated with \eqref{MSP} is nonempty. 
Assume further that
\(P_x^k-Q \preceq \frac{2\alpha}{\sigma}I\)
with $\alpha\in(0,1)$, where \(Q=\operatorname{diag}(Q_1,\ldots,Q_T)\). Let $\{\bar{\varpi}^k\}$ be the sequence generated by Algorithm~\ref{alg:msp}. 
Then $\{\bar{\varpi}^k\}$ converges to a point in $\bar{\Omega}_T$.
\end{theorem}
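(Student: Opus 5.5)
The plan is to reduce Theorem~\ref{thm:conv-msp} to the three-stage argument by rewriting the $T$-stage restricted Wolfe dual \eqref{MSP3} in the grouped form \eqref{D2}. We set $m:=(v_{1:T};w_{1:T})$ and $n:=(z_{1:T};y_{1:T})$. The operator $\mathcal F^*$ becomes the block matrix $(I,\ -\operatorname{diag}(Q_1,\dots,Q_T))$. The operator $\mathcal G^*$ becomes the block matrix whose $t$-th row is $(e_t^{\top}\otimes I,\ A_t^*,\ B_{t+1}^*)$, placed at the columns of $z_t$, $y_t$ and $y_{t+1}$, with $B_{T+1}^*=0$. The functions are $p(m)=\sum_t\theta_t^*(-v_t)$, $q(n)=\sum_t\delta^*_{\mathcal K_t}(-z_t)$ and $g(n)=-\sum_t\langle b_t,y_t\rangle$, and $f(\cdot;x')$ is the $T$-term analogue of the three-stage $f$. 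We then define $\widetilde{\mathcal M}=\widetilde Q+\sigma\mathcal F\mathcal F^*$, $\widetilde{\mathcal N}=\sigma\mathcal G\mathcal G^*$, $\mathcal S=\operatorname{sGS}(\widetilde{\mathcal M})$, $\mathcal T=\operatorname{sGS}(\widetilde{\mathcal N})$, $\mathcal M=\widetilde{\mathcal M}+\mathcal S$ and $\mathcal N=\widetilde{\mathcal N}+\mathcal T$. This yields a $T$-stage analogue of Algorithm~\ref{alg:tsp-p}.

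\textbf{Step 1 (equivalence).} We apply the sGS decomposition theorem \cite[Theorem 1]{li2019block} exactly as in Theorem~\ref{3.3}. The forward--backward sweeps over $w$ in Steps 3.1a/3.1c of Algorithm~\ref{alg:msp}, taken in the block order $(w_1,\dots,w_T)$ with the $v$-block in between, coincide with the proximal $m$-update carrying the term $\frac12\|m-m^k\|_{\mathcal S}^2$. Likewise the $(y,z,y)$-sweep coincides with the $n$-update carrying $\mathcal T$, and the blockwise errors $\|d\|\le\varepsilon_k$ combine into errors $d_m^k,d_n^k$ with $\|\mathcal M^{-1/2}d_m^k\|$ and $\|\mathcal N^{-1/2}d_n^k\|$ bounded by a constant multiple of $\varepsilon_k$. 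To invoke the theorem we need the diagonal blocks of $\widetilde{\mathcal M}$ and $\widetilde{\mathcal N}$ to be positive definite. For the $v_t$ and $z_t$ blocks the diagonal block is $\sigma I$. For the $w_t$ block, positive definiteness is arranged by restricting $w_t$ to $\mathcal W_t$. For the $y_t$ block, the diagonal block $\sigma(A_tA_t^*+B_tB_t^*)$ is positive definite if each $A_t$ has full row rank, as assumed in Section~\ref{sec:clo}. If full row rank is not available, we add a small proximal term; this keeps the same analysis. The result is $\mathcal M\succ0$ and $\mathcal N\succ0$.

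\textbf{Step 2 (descent and inexactness).} We redo Lemma~\ref{lem:3.1} and Proposition~\ref{prop:descent} with the $T$-stage operators. These proofs use only the abstract structure: the maximal monotonicity of $\partial p$ and $\partial q$, the compact KKT system \eqref{KKT2} with $\hat b=(0;b_{1:T})$ and $Q=\operatorname{diag}(Q_1,\dots,Q_T)$, the identities \eqref{5.7}--\eqref{5.19}, and Clarke's mean value theorem giving $P_x^k$ with $Q\preceq P_x^k\preceq Q+\frac{2\alpha}{\sigma}I$. None of these depends on the number of stages, so the inequality \eqref{5.6} holds verbatim. Proposition~\ref{inexact} likewise transfers with the new $\mathcal M$ and $\mathcal N$.

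\textbf{Step 3 (convergence).} We then repeat the proof of Theorem~\ref{thm:conv}. First, $\mathcal A\succ0$. Next, the potential $\xi^k$ satisfies $\|\xi^{k+1}\|\le\|\xi^k\|+\eta_k$ with $\{\eta_k\}$ summable, which gives boundedness and the summability estimate \eqref{5.29}. From this, $r^k\to0$, $\Delta_n^k\to0$ and $\mathcal S\Delta_m^k\to0$. Passing to the limit in \eqref{5.30} along a subsequence shows that every cluster point lies in $\bar\Omega_T$. Finally, the quasi-Fej\'er argument with $\|\xi^{k_i+1}\|\to0$ gives convergence of the whole sequence, and $\mathcal M\succ0$ recovers convergence of $m^k$.

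\textbf{Main obstacle.} We expect the main obstacle to be Step 1. We must check carefully that the chosen block order for the sGS sweeps, in particular for the coupled $y_{1:T}$ chain in which $y_{t+1}$ enters stage-$t$ rows through $B_{t+1}^*$, matches the ordering in Algorithm~\ref{alg:msp}. We must also check that the blockwise inexact criteria collapse into the grouped criteria with constants independent of $k$, and that the use of Lemma~\ref{lem:key} inside each $y_t$ block computes the exact block minimizer, so that it introduces no further error.
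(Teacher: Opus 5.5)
Your proposal is correct and follows the paper's proof. You group $m=(v_{1:T},w_{1:T})$ and $n=(z_{1:T},y_{1:T})$, and use the sGS decomposition theorem with the $T$-stage extensions of $\widetilde{\mathcal M},\widetilde{\mathcal N}$ to identify Algorithm~\ref{alg:msp} with the grouped isPADMM. You then note that Lemma~\ref{lem:3.1} and Propositions~\ref{prop:descent} and~\ref{inexact} do not depend on the number of stages, and rerun the proof of Theorem~\ref{thm:conv}. Your extra bookkeeping makes explicit points the paper leaves implicit, without changing the argument: positive definite diagonal blocks (e.g., via full row rank of $A_t$, as you note), a $k$-independent constant in the collected inexactness bounds, and the block ordering behind the $(y,z,y)$ sweep.
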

\begin{proof}
We only indicate the modifications needed for the multi-stage case, since the argument is identical after replacing the three-stage grouping by the \(T\)-stage grouping.
For the MSP problem \eqref{MSP}, the dual variables are grouped as
\(m=(v_{1:T},w_{1:T})\) and \(n=(z_{1:T},y_{1:T})\). 
The restricted Wolfe dual of the corresponding minorized subproblem admits the
same grouped structure as in the three-stage case. The only changes are the
index ranges, the dimensions of the grouped variables, and the concrete forms of
the associated sGS operators. These operators are the natural \(T\)-stage
extensions of \(\widetilde{\mathcal M}\) and \(\widetilde{\mathcal N}\), and preserve
the scenario-tree block structure. Thus, the sGS decomposition theorem gives the block sweeps in
Algorithm~\ref{alg:msp}, and the blockwise errors satisfy the required grouped
inexactness conditions. 
Therefore, Lemma~\ref{lem:3.1}, Proposition~\ref{prop:descent}, and Proposition~\ref{inexact} remain valid after replacing the three-stage block operators by their \(T\)-stage counterparts. 
The convergence assertion follows from the same argument as in Theorem~\ref{thm:conv}.
\end{proof}

\section{Numerical experiments}\label{sec:ne}

In this section, we conduct numerical experiments to evaluate the performance and scalability of the proposed SL-MDD method. 
For comparison, we also implement a primal sGS-iADMM method (P-sGS-iADMM), which applies the inexact sGS majorized ADMM directly to the primal reformulation with auxiliary variables introduced in Section~\ref{sec:alg}, rather than to the minorized restricted Wolfe dual used by the proposed SL-MDD method. 
Notably, both algorithms feature efficiently parallelizable subproblems.

\subsection{Portfolio optimization with a quadratic utility function}\label{sec:exp1}

We consider a multi-stage portfolio optimization problem involving $\tilde{d}$ risky assets, following the modeling framework of \cite{dantzig1993multi,lan2021dynamic}. The problem is formulated as follows:
\begin{equation}\label{MSPe}
\begin{aligned}
\min_{x^1} \quad & -u\big(\sum_{i=1}^{\tilde{d}} x_i^1\big)
- \frac{1}{N_2} \sum_{k_2=1}^{N_2}\Big( \min\limits_{x^2(\xi_{[2]}^{k_{[2]}})} u\big(\sum_{i=1}^{\tilde{d}} x_i^2(\xi_{[2]}^{k_{[2]}})\big) - \cdots  \\
&  -\frac{1}{N_T} \sum_{k_T=1}^{N_T} \min\limits_{x^T(\xi_{[T]}^{k_{[T]}})} u\big(\sum_{i=1}^{\tilde{d}} x_i^T(\xi_{[T]}^{k_{[T]}})\big) \Big) \\
\text{s.t.} \quad
& \sum_{i=1}^{\tilde{d}} x_i^0 = w_0, \; \textstyle \sum_{i=1}^{\tilde{d}} b_i^{t,+}(\xi_{[t]}^{k_{[t]}}) = \sum_{i=1}^{\tilde{d}} b_i^{t,-}(\xi_{[t]}^{k_{[t]}}), \\
&  x_i^t(\xi_{[t]}^{k_{[t]}}) = R_i^t(\xi_{[t]}^{k_{[t]}})
\big(x_i^{t-1}(\xi_{[t-1]}^{k_{[t-1]}}) + b_i^{t,+}(\xi_{[t]}^{k_{[t]}}) - b_i^{t,-}(\xi_{[t]}^{k_{[t]}})\big), \\
&  x_i^0 \geq 0,\quad x_i^t(\xi_{[t]}^{k_{[t]}}) \geq -\bar{p},\quad
0 \leq b_i^{t,+}(\xi_{[t]}^{k_{[t]}}) \leq \bar{b}^+,\quad
0 \leq b_i^{t,-}(\xi_{[t]}^{k_{[t]}}) \leq \bar{b}^- ,
\end{aligned}
\end{equation}
where $t = 1, \cdots, T$ and $i = 1, \cdots, \tilde d$. Here, $x_i^0$ and $w_0$ denote the initial holding of asset $i$ and the initial wealth, respectively. For each stage $t$ and scenario node $\xi_{[t]}^{k_{[t]}}$, $x_i^t(\xi_{[t]}^{k_{[t]}})$ denotes the holding of asset $i$, while $b_{i}^{t,+}(\xi_{[t]}^{k_{[t]}})$ and $b_{i}^{t,-}(\xi_{[t]}^{k_{[t]}})$ denote the corresponding purchase and sale amounts. The random return of asset $i$ at stage $t$ under scenario node $\xi_{[t]}^{k_{[t]}}$ is denoted by $R_i^t(\xi_{[t]}^{k_{[t]}})$. Scenario realizations are sampled sequentially across stages to form a discrete scenario tree with $N_t = 40$ for each stage. The random variables are generated as $\xi_2 \sim \mathcal{U}[0,1]$ and $\xi_t \sim \mathcal{U}[\xi_{t-1},\xi_{t-1}+1]$ for $t \geq 3$. The investor's risk preference is described by the concave quadratic utility function
\(
u(W_t)=W_t-\lambda W_t^2,
\)
where $W_t$ denotes the total wealth at stage $t$, with
\(
W_t(\xi_{[t]}^{k_{[t]}})=\sum_{i=1}^{\tilde d} x_i^t(\xi_{[t]}^{k_{[t]}}),
\)
and $\lambda=\frac{1}{3w_0}$; see \cite{pedersen2003utility}. Short selling is allowed subject to the lower bound $x_i^t(\xi_{[t]}^{k_{[t]}})\ge -\bar p$, and the trading bounds $\bar b^+$ and $\bar b^-$ are imposed to reflect practical market liquidity constraints.

\begin{remark}
The solution accuracy of the proposed SL-MDD method is evaluated by the relative KKT residual
\[
\eta_{\mathrm{KKT}} = \max_{1 \le t \le T} \{ \eta_{P_t}, \eta_{D_t}, \eta_{Q_t}, \eta_{\theta_t}, \eta_{\mathcal{K}_t} \},
\]
where the individual residuals are defined for $t=1,\ldots,T$, with $x_0=0$, $B_{T+1}=0$, and $y_{T+1}=0$, as follows:
\[
\begin{aligned}
\eta_{P_t} &= \frac{\| B_tx_{t-1} + A_tx_t - b_t \|}{1 + \| b_t \|}, \quad
\eta_{D_t} = \frac{\| A_t^*y_t + B_{t+1}^*y_{t+1} + v_t + z_t - Q_tw_t - c_t(x_t') \|}{1 + \| c_t(x_t') \|}, \\
\eta_{Q_t} &= \frac{\| Q_tw_t - Q_tx_t\|}{1 + \| Q_t \|}, \quad
\eta_{\theta_t} = \frac{\| x_t - \mathrm{Prox}_{\theta_t}(x_t - v_t) \|}{1 + \| x_t \| + \| v_t \|}, \quad
\eta_{\mathcal{K}_t} = \frac{\| x_t - \Pi_{\mathcal{K}_t}(x_t - z_t) \|}{1 + \| x_t \| + \| z_t \|}.
\end{aligned}
\]
The relative duality gap is defined by
\[
\eta_{\mathrm{gap}} =
\frac{|\mathrm{obj}_P-\mathrm{obj}_D|}{1+|\mathrm{obj}_P|+|\mathrm{obj}_D|},
\]
where
\[
\mathrm{obj}_P := \sum_{t=1}^{T} \big(\theta_t(x_t)+h_t(x_t)+\delta_{\mathcal{K}_t}(x_t)\big),
\]
and
\[
\begin{aligned}
\mathrm{obj}_D := \sum_{t=1}^{T} \Big(&-\frac{1}{2}\langle w_t,Q_tw_t\rangle
-\theta_t^*(-v_t)-\delta_{\mathcal{K}_t}^*(-z_t)+\langle b_t,y_t\rangle \\
&+h_t(x_t')+\frac{1}{2}\langle x_t',Q_tx_t'\rangle
-\langle \nabla h_t(x_t'),x_t'\rangle\Big).
\end{aligned}
\]
The SL-MDD method terminates when $\max\{\eta_{\mathrm{KKT}},\eta_{\mathrm{gap}}\}\le 10^{-3}$. 
For P-sGS-iADMM, we use its standard primal-dual residual stopping criterion with tolerance $10^{-3}$ and adaptively update the penalty parameter every 50 iterations by balancing the primal and dual residuals.
\end{remark}

We test both P-sGS-iADMM and the SL-MDD method on several instances of problem \eqref{MSPe} to examine their behavior under different dimensions and stage numbers. Let $w_0 = 5$ and we consider three groups of instances: the first group (Instances 1-3) fixes the number of stages at $T = 3$ and varies the number of assets $\tilde{d} \in \{5, 200, 400\}$; the second group (Instances 4-6) fixes $T = 4$ and uses the same variation in assets $\tilde{d} \in \{5, 200, 400\}$; the third group (Instances 1, 4 and 7) fixes $\tilde{d} = 5$ and varies $T \in \{3, 4, 5\}$. 
Parallel runtimes are estimated assuming a parallel computing environment with 1000 processing cores.

\begin{figure}[htbp]
  \centering
  \begin{minipage}{0.32\textwidth}
    \centering
    \includegraphics[width=\linewidth, height=0.8\linewidth]{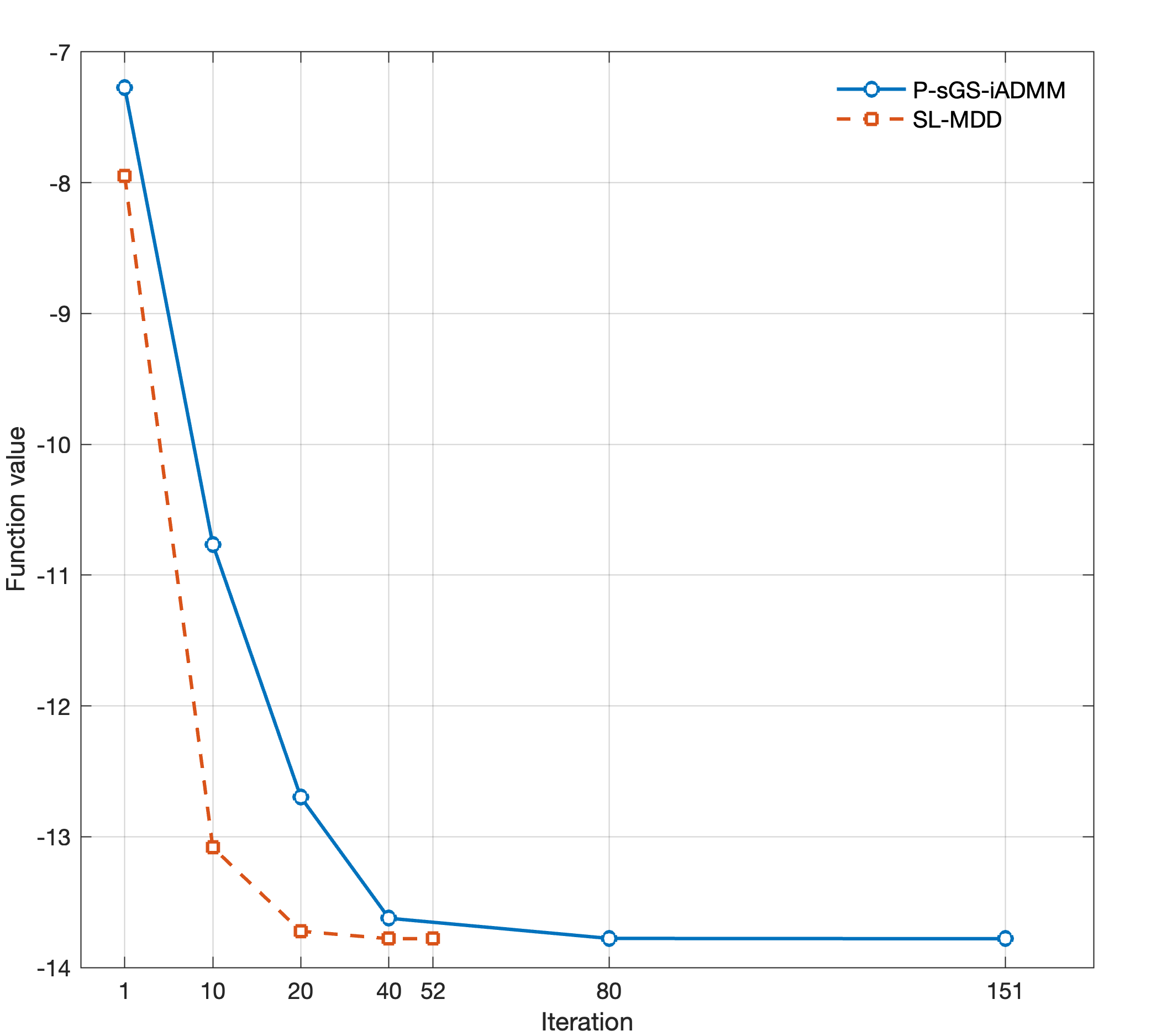}
    \\[2pt]
    \footnotesize(a) Function value vs iteration
  \end{minipage}
  \hfill
  \begin{minipage}{0.32\textwidth}
    \centering
    \includegraphics[width=\linewidth, height=0.8\linewidth]{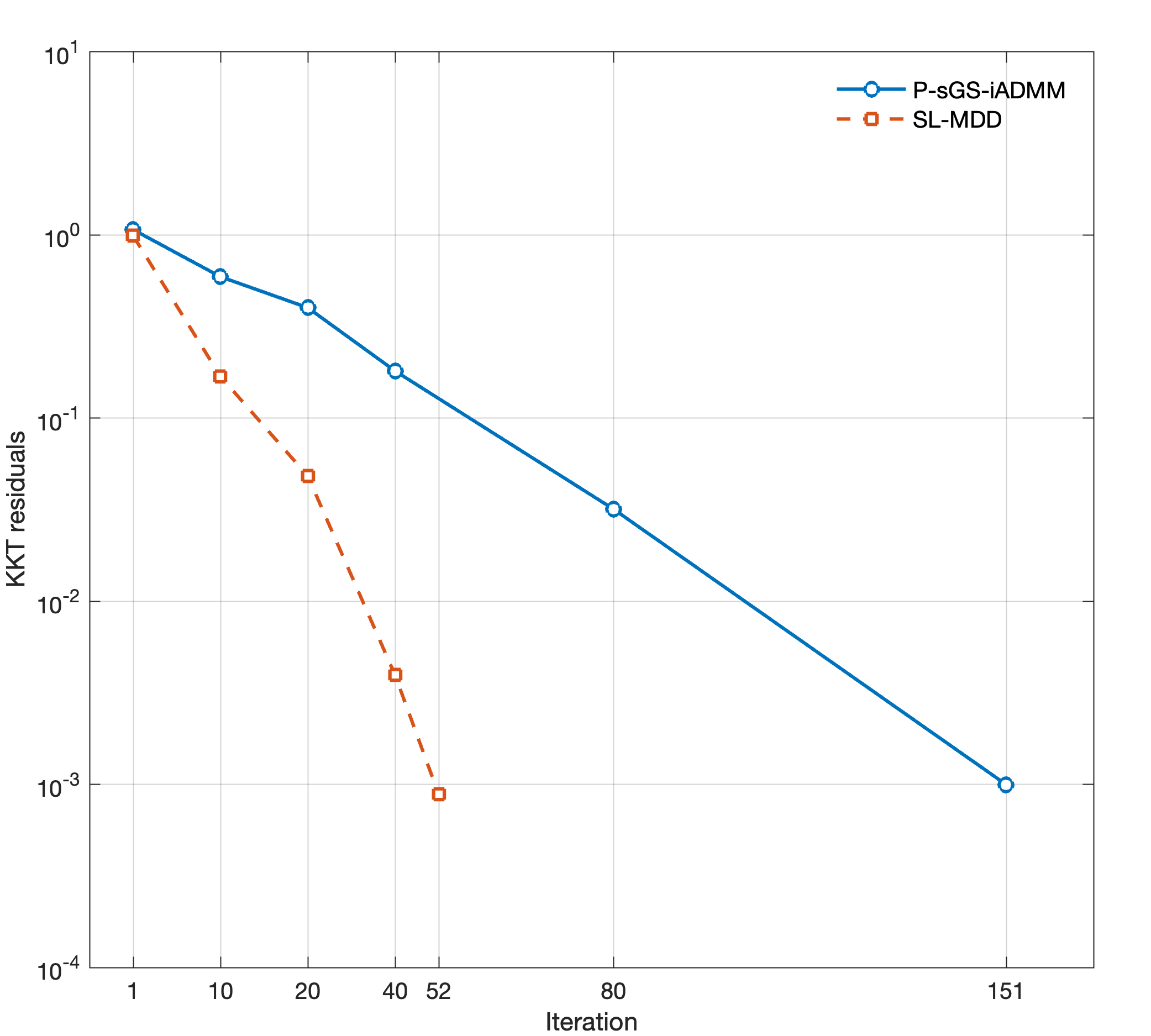}
    \\[2pt]
    \footnotesize(b) KKT residuals vs iteration
    \end{minipage}
  \hfill
  \begin{minipage}{0.32\textwidth}
    \centering
    \includegraphics[width=\linewidth, height=0.8\linewidth]{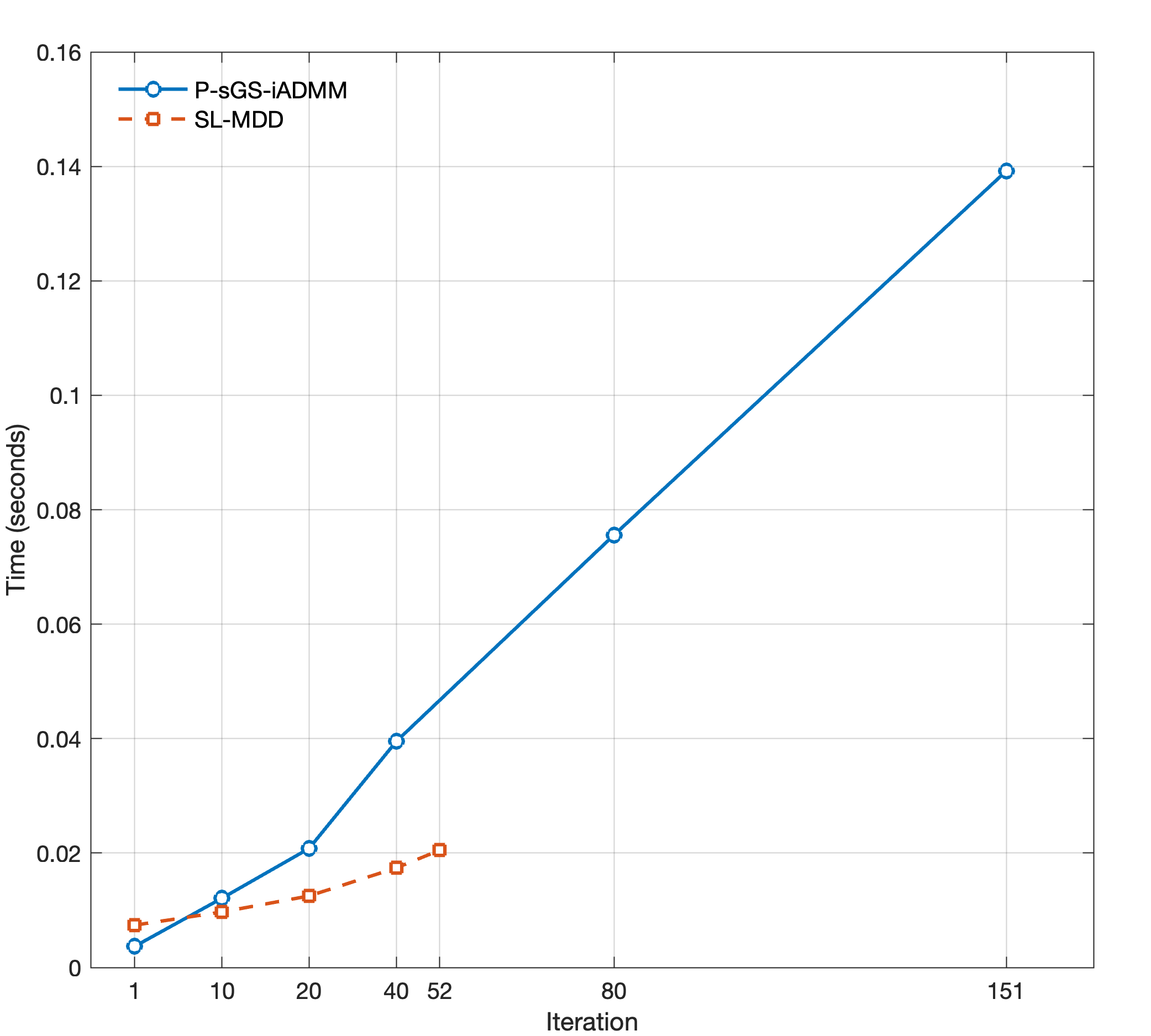}
    \\[2pt]
    \footnotesize(c) Parallel time vs iteration
  \end{minipage}
  \caption{Instance 1 (Experiment 1): $\tilde{d}=5, T=3$}
  \label{fig:Instance1}
\end{figure}
\begin{figure}[htbp]
  \centering
  \begin{minipage}{0.32\textwidth}
    \centering
    \includegraphics[width=\linewidth, height=0.8\linewidth]{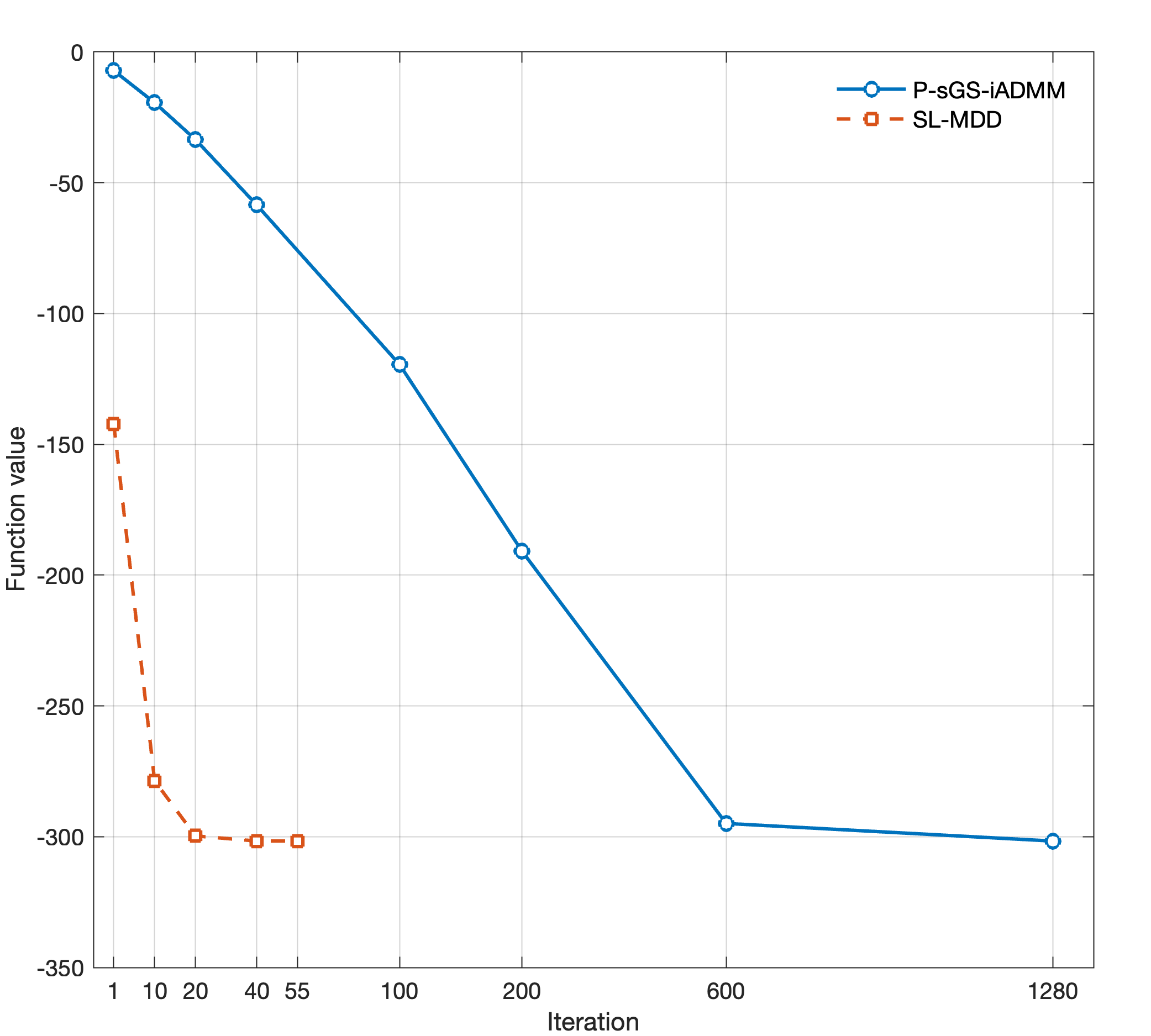}
    \\[2pt]
    \footnotesize(a) Function value vs iteration
  \end{minipage}
  \hfill
  \begin{minipage}{0.32\textwidth}
    \centering
    \includegraphics[width=\linewidth, height=0.8\linewidth]{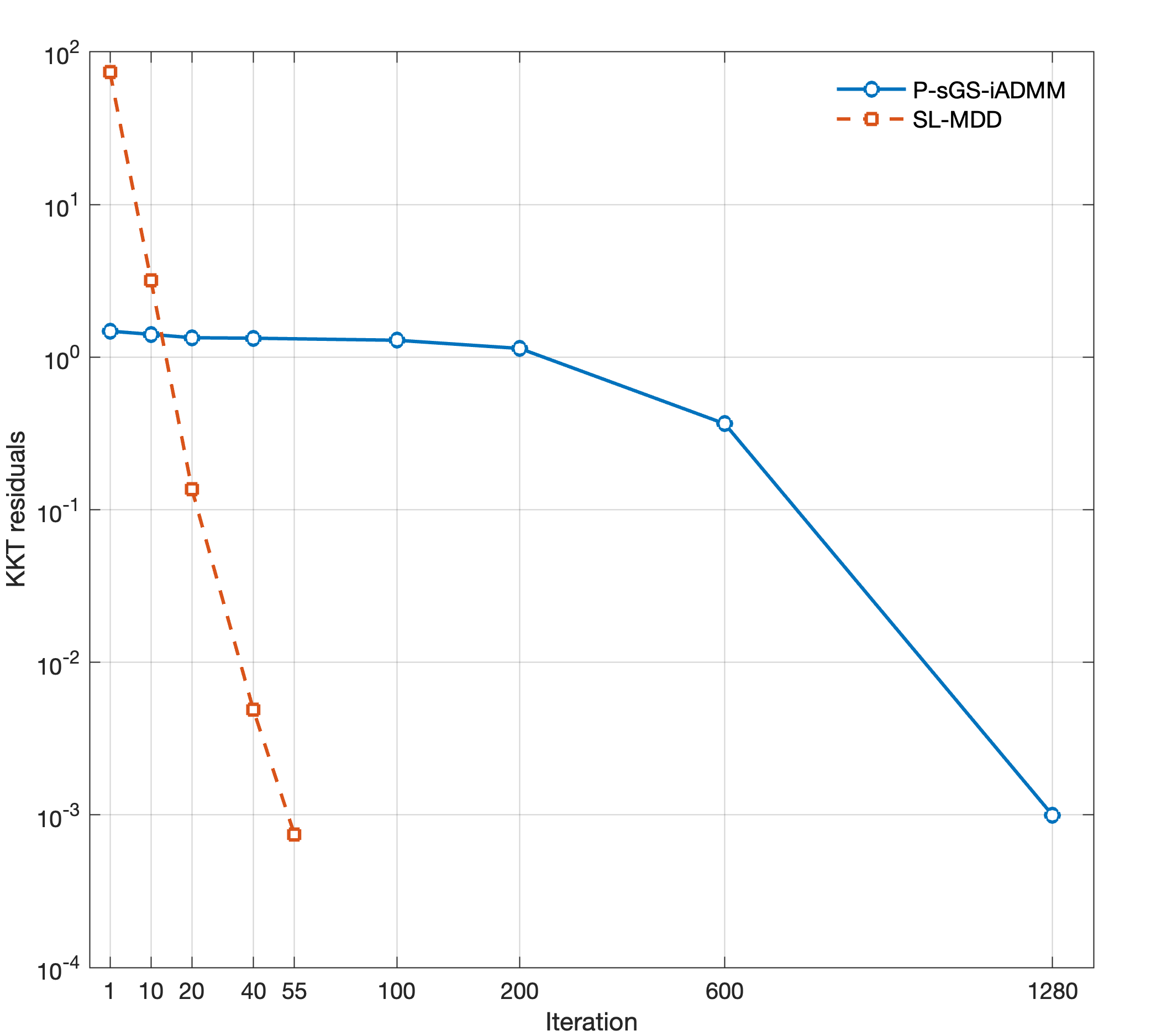}
    \\[2pt]
    \footnotesize(b) KKT residuals vs iteration
      \end{minipage}
  \hfill
  \begin{minipage}{0.32\textwidth}
    \centering
    \includegraphics[width=\linewidth, height=0.8\linewidth]{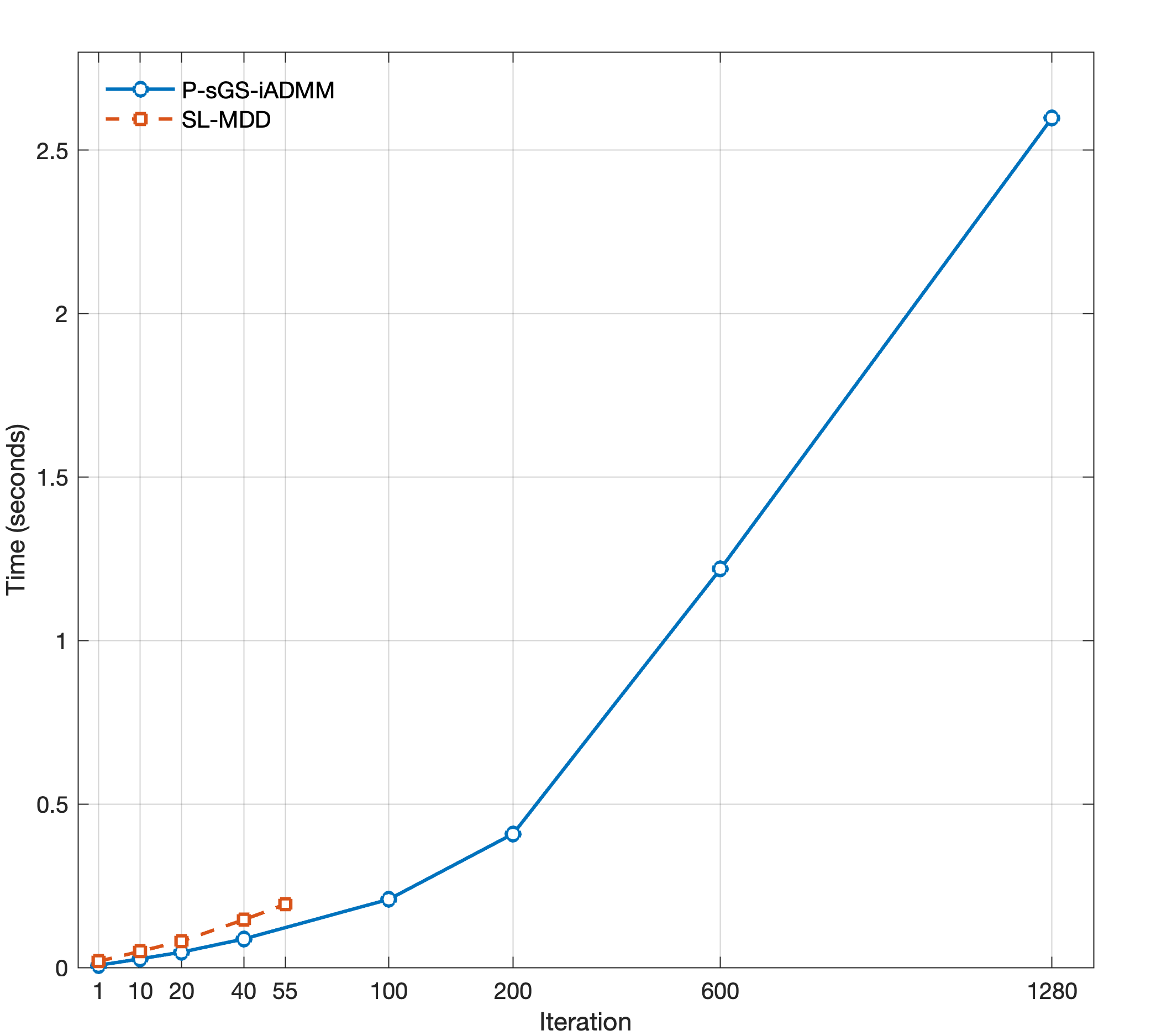}
    \\[4pt]
    \footnotesize(c) Parallel time vs iteration
      \end{minipage}
  \caption{Instance 2 (Experiment 1): $\tilde{d}=200, T=3$}
  \label{fig:Instance2}
\end{figure}
\begin{figure}[htbp]
  \centering
  \begin{minipage}{0.32\textwidth}
    \centering
    \includegraphics[width=\linewidth, height=0.8\linewidth]{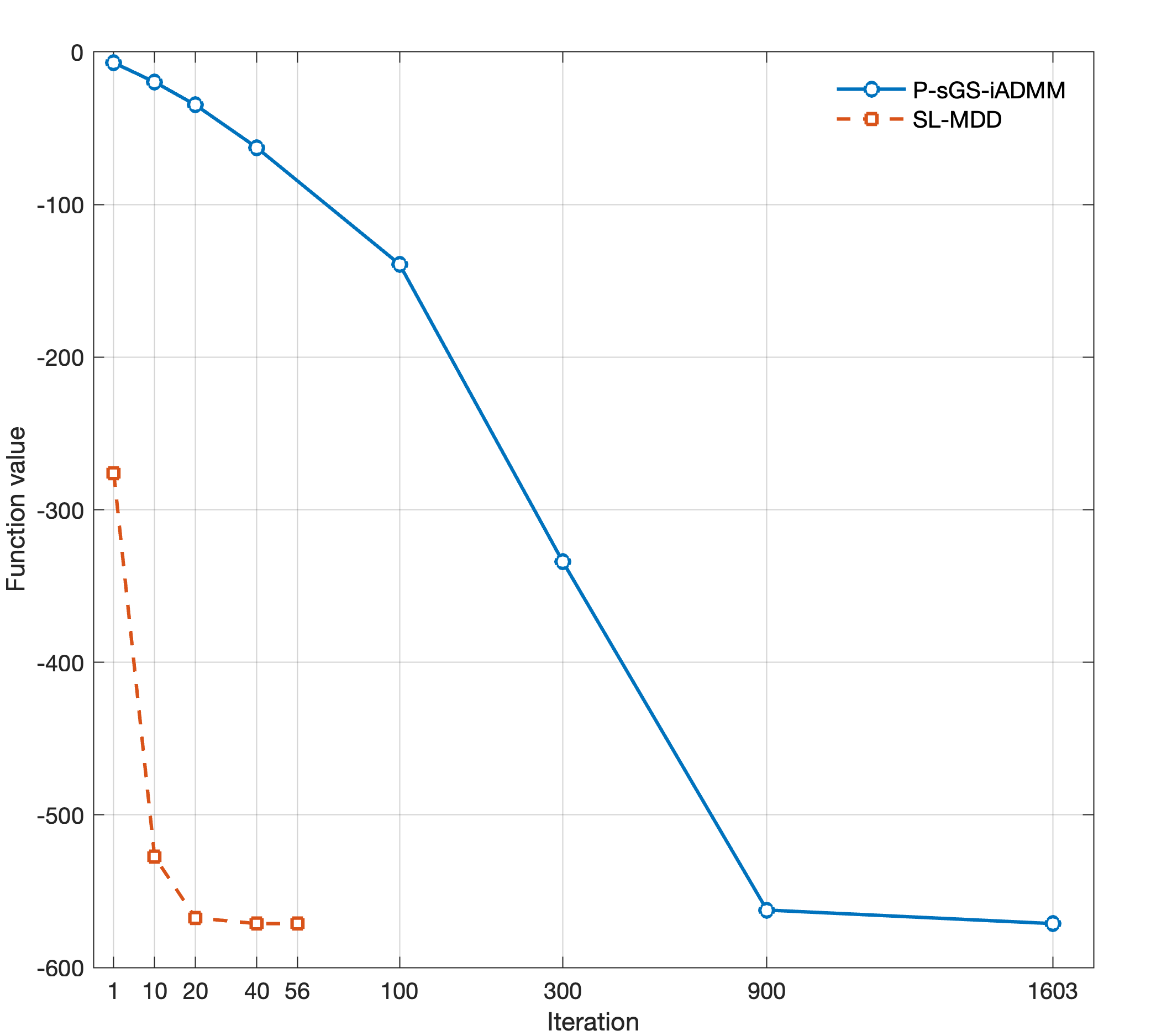}
    \\[2pt]
    \footnotesize(a) Function value vs iteration
  \end{minipage}
  \hfill
  \begin{minipage}{0.32\textwidth}
    \centering
    \includegraphics[width=\linewidth, height=0.8\linewidth]{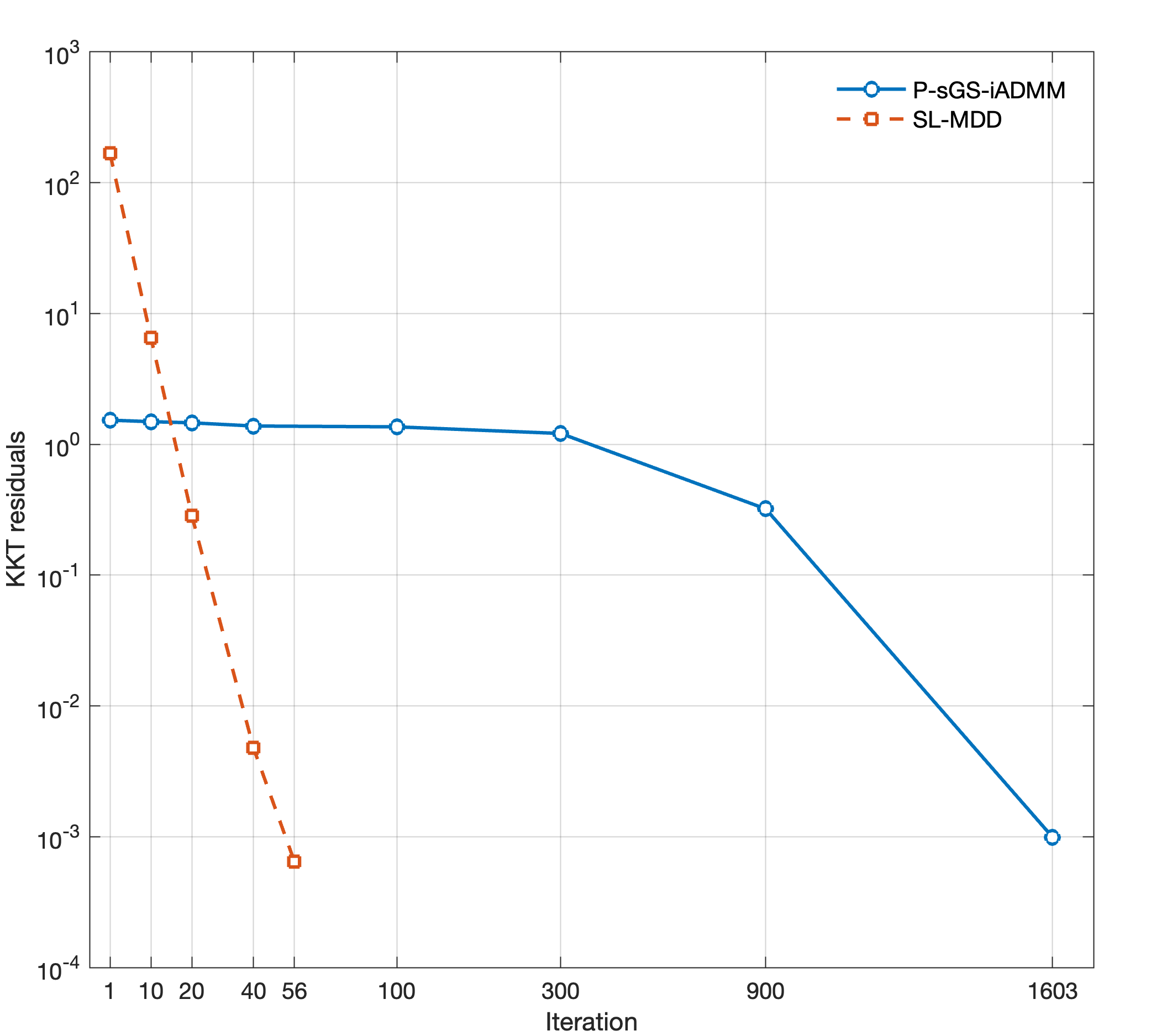}
    \\[4pt]
    \footnotesize(b) KKT residuals vs iteration
    \end{minipage}
  \hfill
  \begin{minipage}{0.32\textwidth}
    \centering
    \includegraphics[width=\linewidth, height=0.8\linewidth]{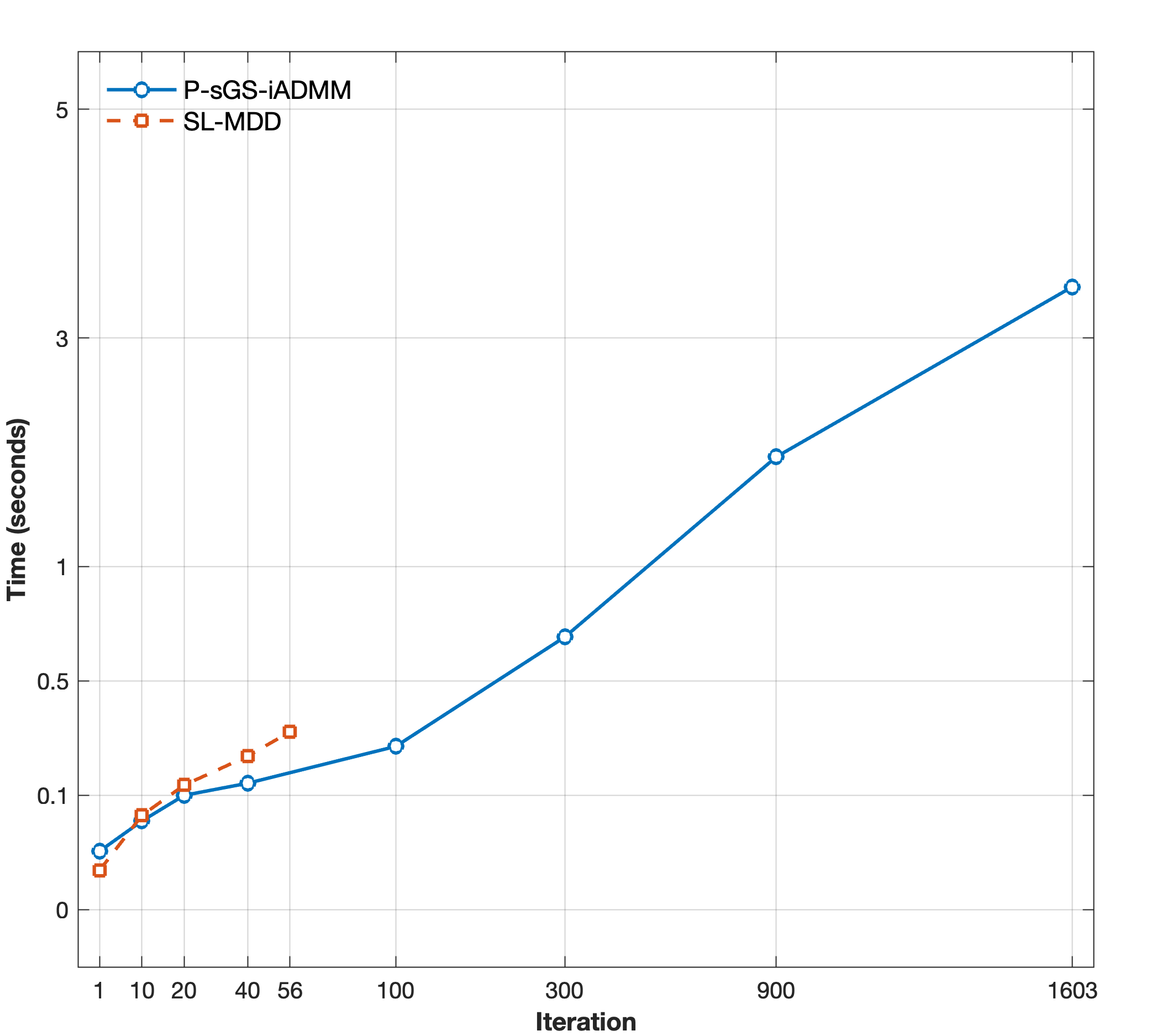}
    \\[2pt]
    \footnotesize(c) Parallel time vs iteration
  \end{minipage}
  \caption{Instance 3 (Experiment 1): $\tilde{d}=400, T=3$}
  \label{fig:Instance3}
\end{figure}
\begin{table}[htbp]
\footnotesize
\caption{Performance comparison on Experiment 1, Group 1}\label{tab:group1}
\centering
\begin{tabular}{ccccccccc}
\toprule
\multirow{2}{*}{Instances} & \multirow{2}{*}{$T$} & \multirow{2}{*}{$\tilde{d}$} & \multicolumn{2}{c}{Iter} & \multicolumn{2}{c}{Time (s)} & \multicolumn{2}{c}{Parallel Time (s)} \\
\cmidrule(r){4-5} \cmidrule(r){6-7} \cmidrule(r){8-9}
 & & & D & P & D & P & D & P \\
\midrule
        1 & 3 & 5    & 52    & 151   & 1.53      & 2.66     & 0.02      & 0.14      \\
        2 & 3 & 200  & 55    & 1280  & 5.73    & 146.97     & 0.19      & 2.60     \\
        3 & 3 & 400  & 56    & 1603  & 9.64   & 387.20    & 0.32     & 3.45     \\
  \bottomrule
\end{tabular}
\end{table}

\begin{figure}[htbp]
  \centering
  \begin{minipage}{0.32\textwidth}
    \centering
    \includegraphics[width=\linewidth, height=0.8\linewidth]{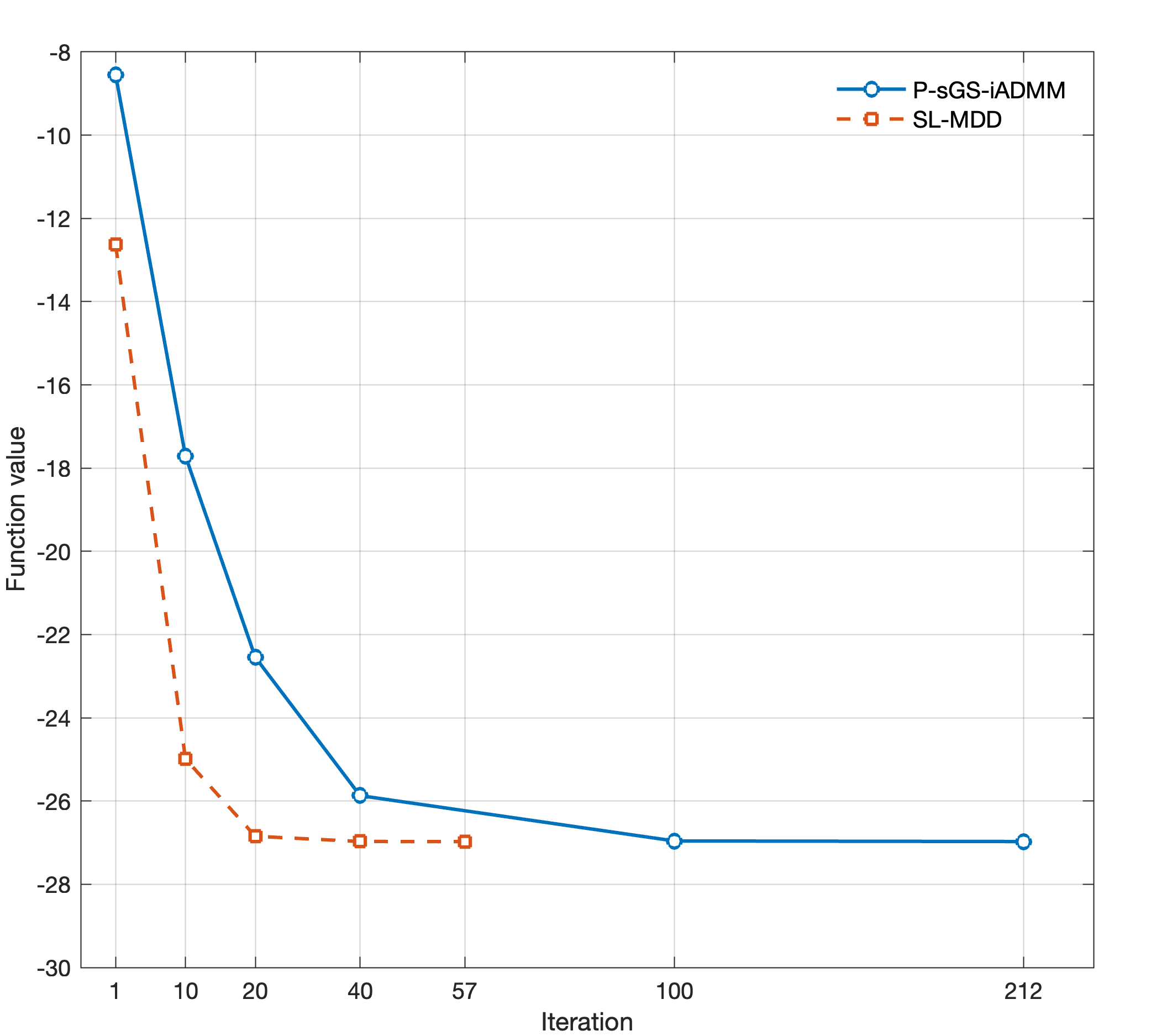}
    \\[2pt]
    \footnotesize(a) Function value vs iteration
  \end{minipage}
  \hfill
  \begin{minipage}{0.32\textwidth}
    \centering
    \includegraphics[width=\linewidth, height=0.8\linewidth]{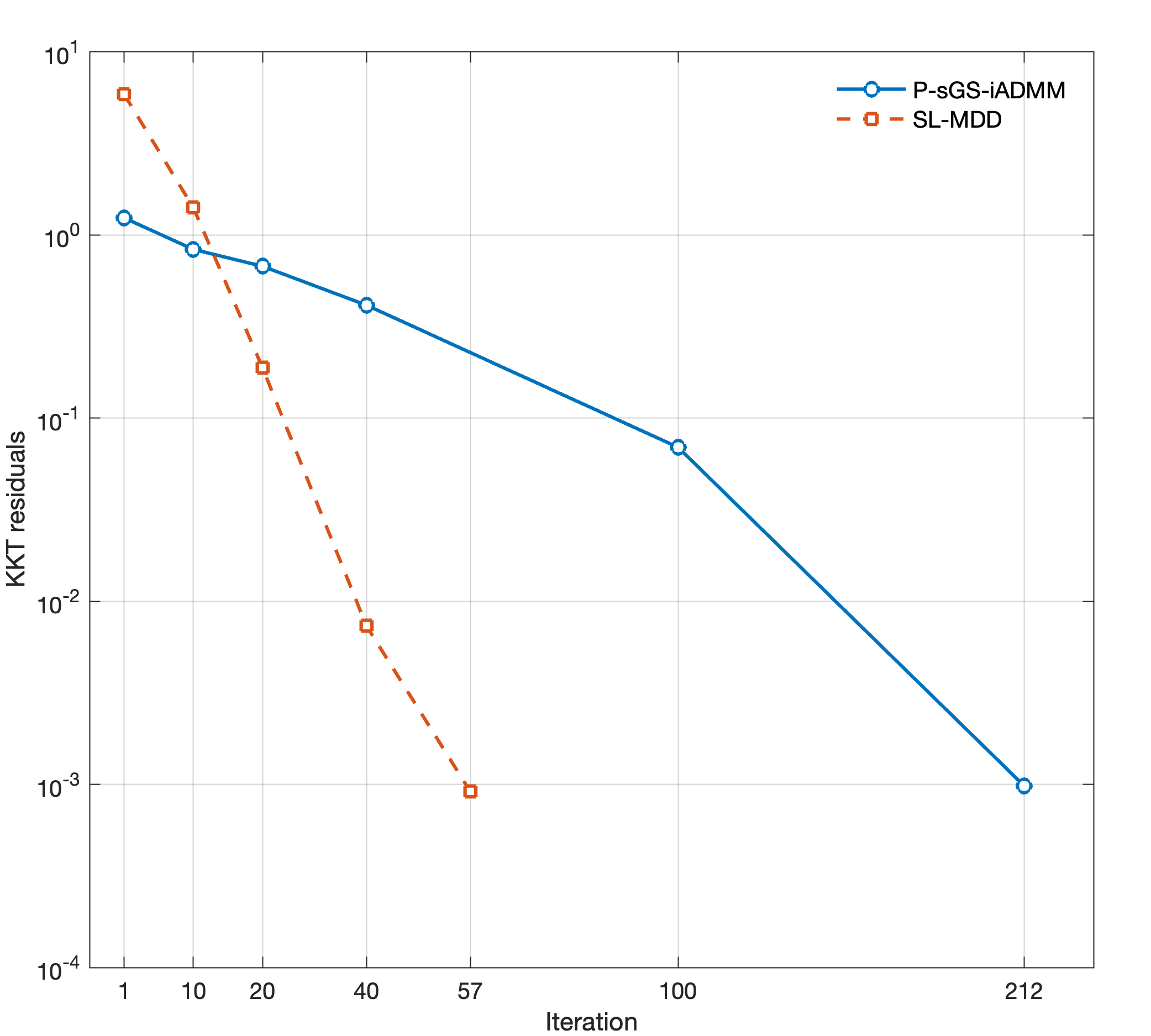}
    \\[2pt]
    \footnotesize(b) KKT residuals vs iteration
  \end{minipage}
  \hfill
  \begin{minipage}{0.32\textwidth}
    \centering
    \includegraphics[width=\linewidth, height=0.8\linewidth]{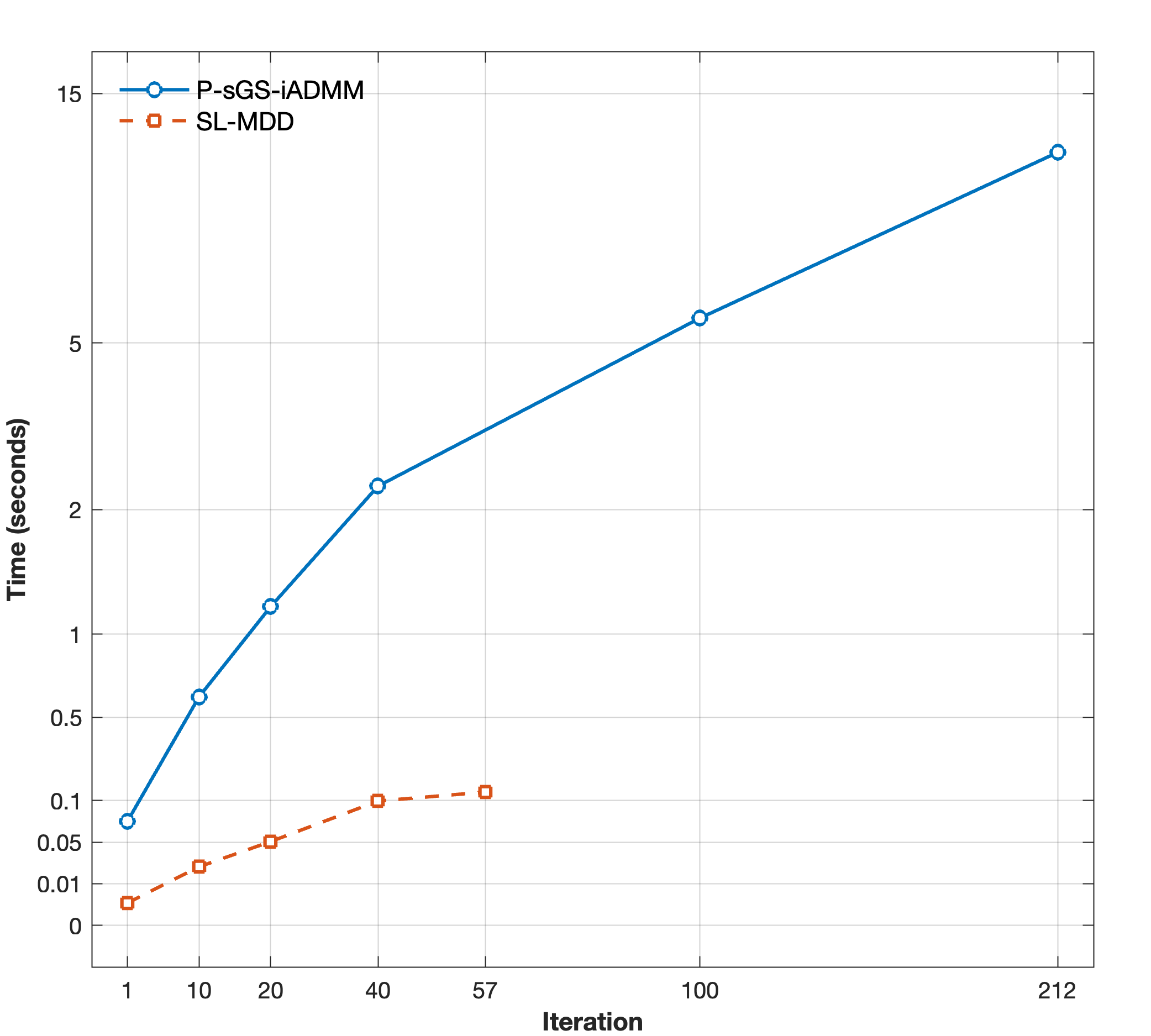}
    \\[2pt]
    \footnotesize(c) Parallel time vs iteration
  \end{minipage}
  \caption{Instance 4 (Experiment 1): $\tilde{d}=5, T=4$}
  \label{fig:Instance4}
\end{figure}

\begin{figure}[htbp]
  \centering
  \begin{minipage}{0.32\textwidth}
    \centering
    \includegraphics[width=\linewidth, height=0.8\linewidth]{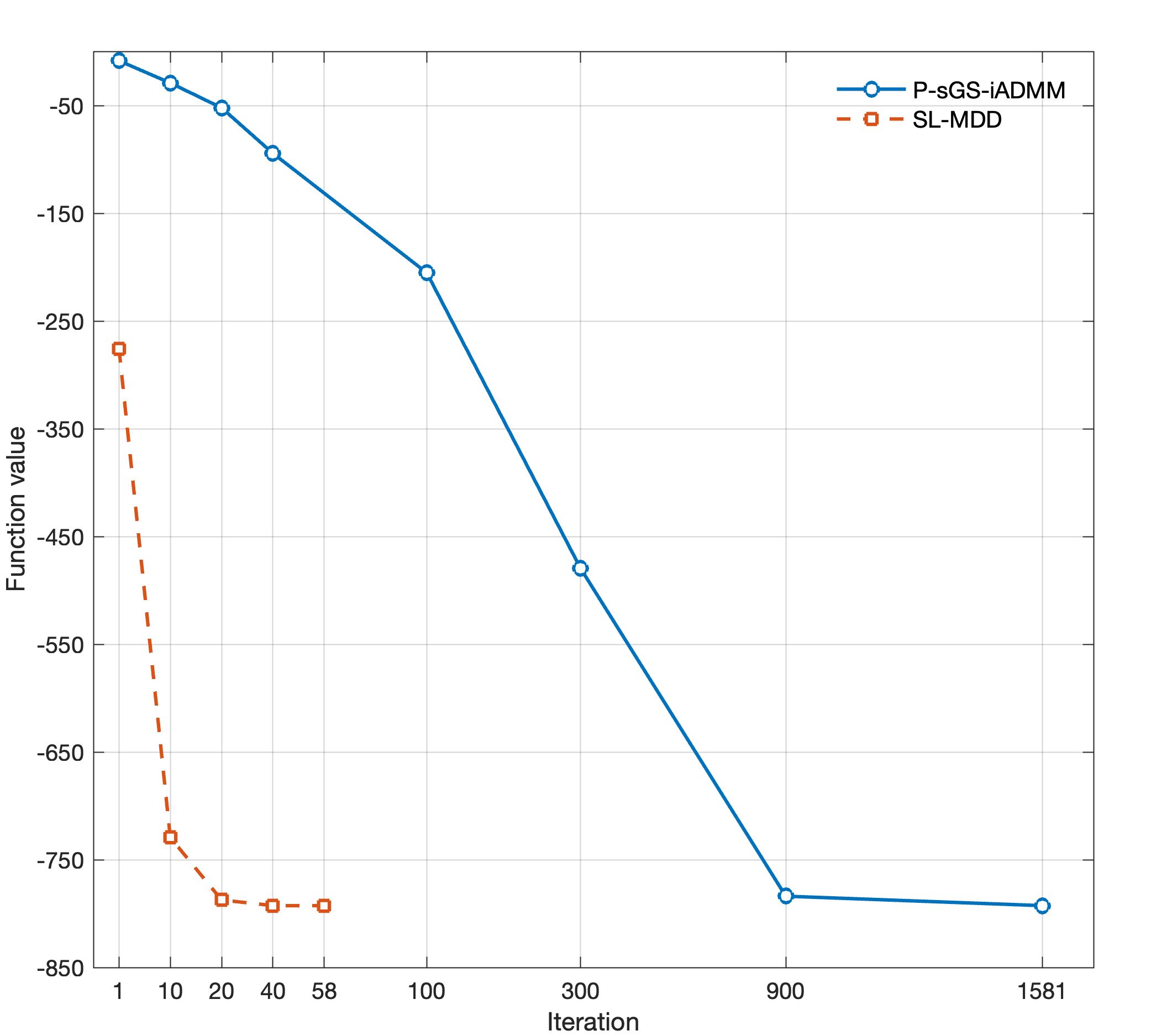}
    \\[2pt]
    \footnotesize(a) Function value vs iteration
  \end{minipage}
  \hfill
  \begin{minipage}{0.32\textwidth}
    \centering
    \includegraphics[width=\linewidth, height=0.8\linewidth]{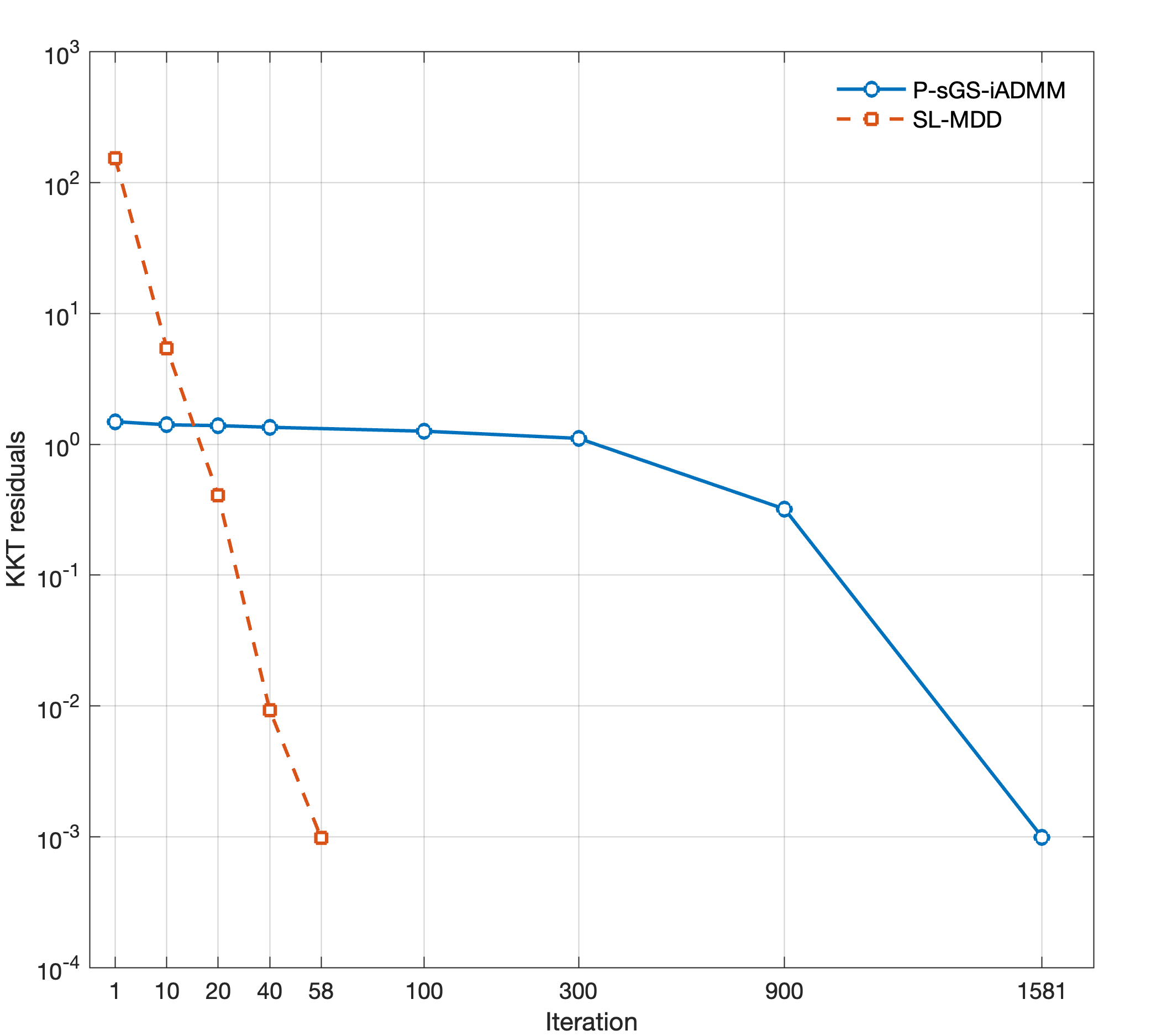}
    \\[2pt]
    \footnotesize(b) KKT residuals vs iteration
  \end{minipage}
  \hfill
  \begin{minipage}{0.32\textwidth}
    \centering
    \includegraphics[width=\linewidth, height=0.8\linewidth]{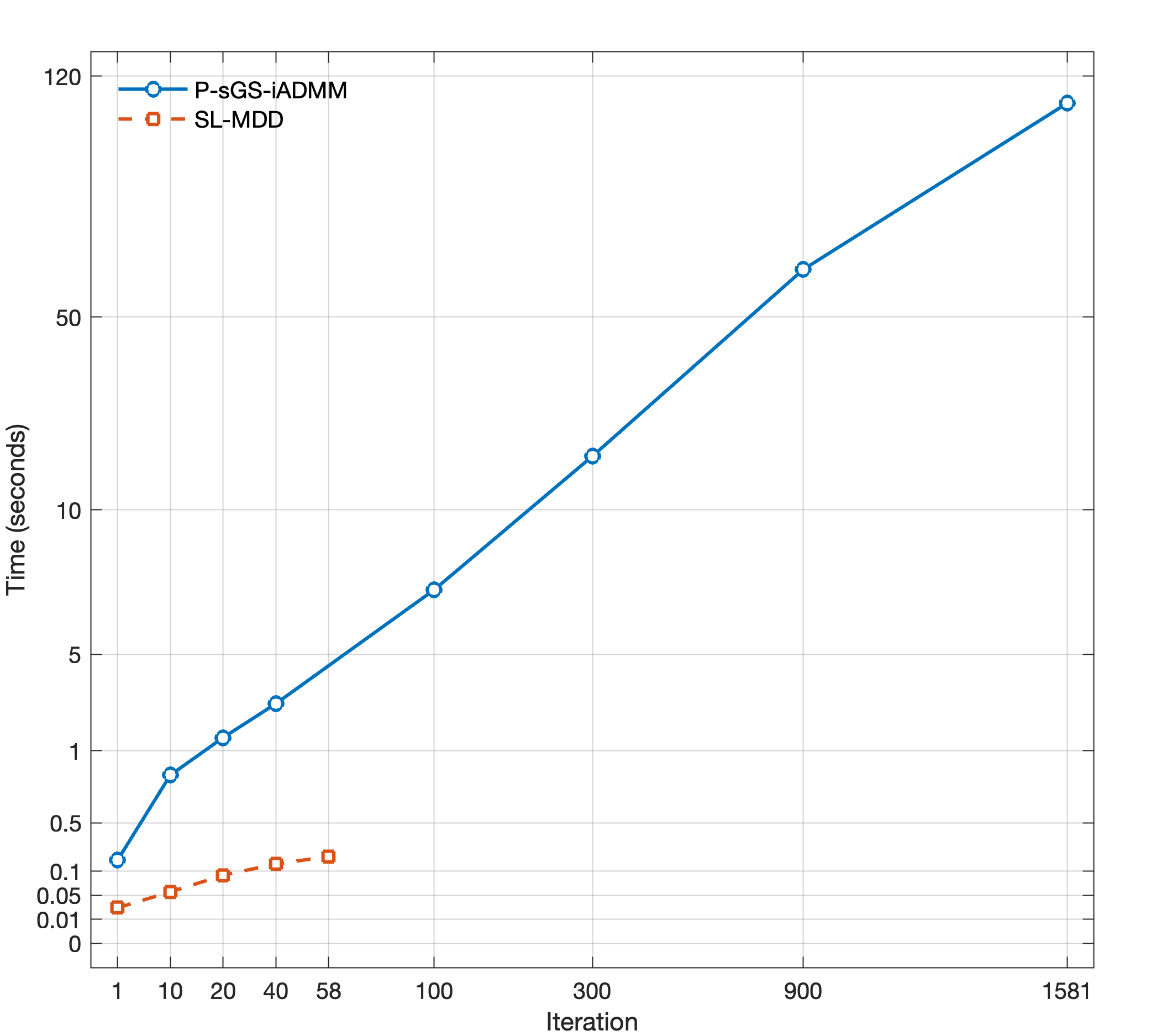}
    \\[2pt]
    \footnotesize(c) Parallel time vs iteration
  \end{minipage}
  \caption{Instance 5 (Experiment 1): $\tilde{d}=200, T=4$}
  \label{fig:Instance5}
\end{figure}
\begin{figure}[htbp]
  \centering
  \begin{minipage}{0.32\textwidth}
    \centering
    \includegraphics[width=\linewidth, height=0.8\linewidth]{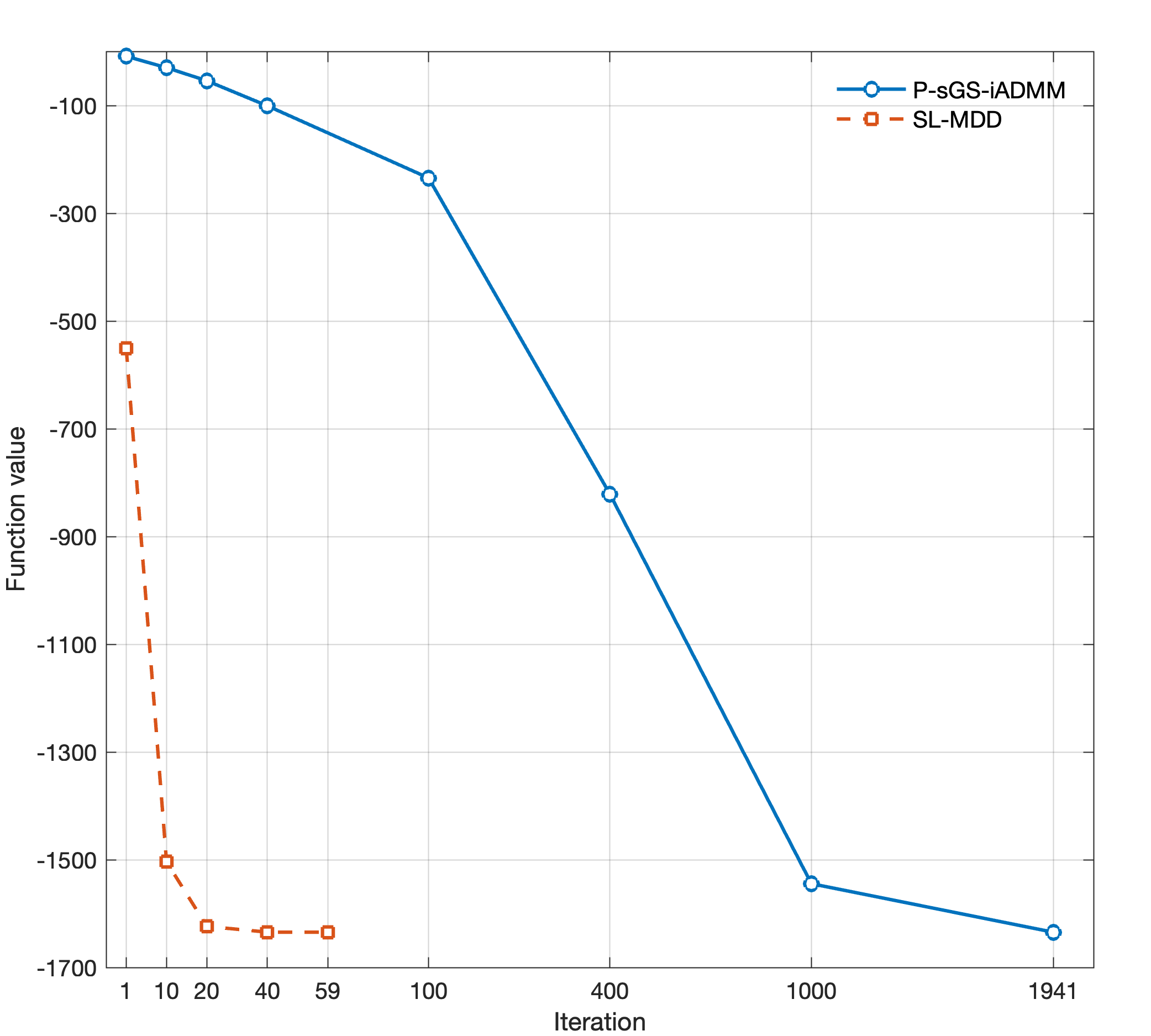}
    \\[2pt]
    \footnotesize(a) Function value vs iteration
  \end{minipage}
  \hfill
  \begin{minipage}{0.32\textwidth}
    \centering
    \includegraphics[width=\linewidth, height=0.8\linewidth]{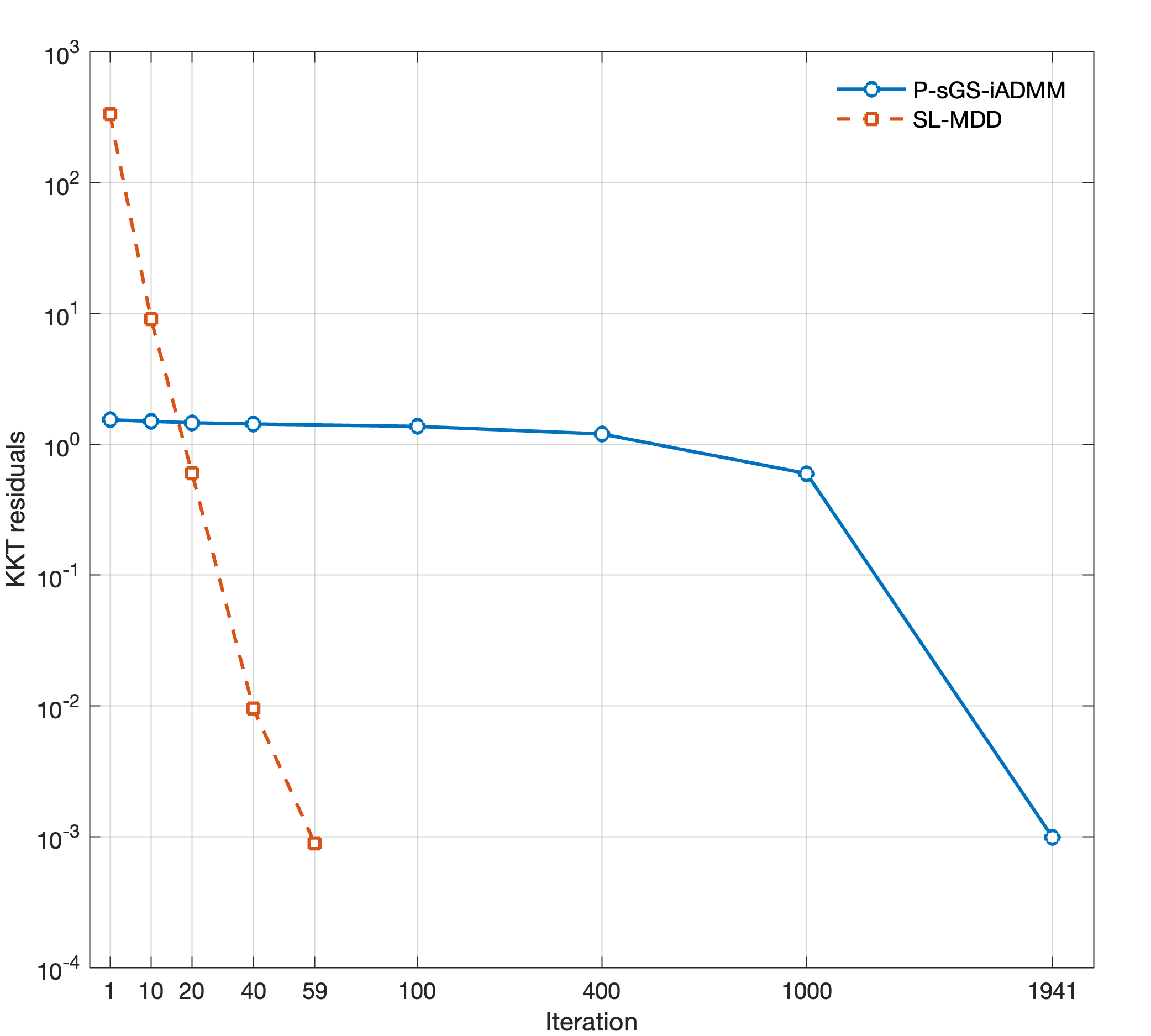}
    \\[2pt]
    \footnotesize(b) KKT residuals vs iteration
  \end{minipage}
  \hfill
  \begin{minipage}{0.32\textwidth}
    \centering
    \includegraphics[width=\linewidth, height=0.8\linewidth]{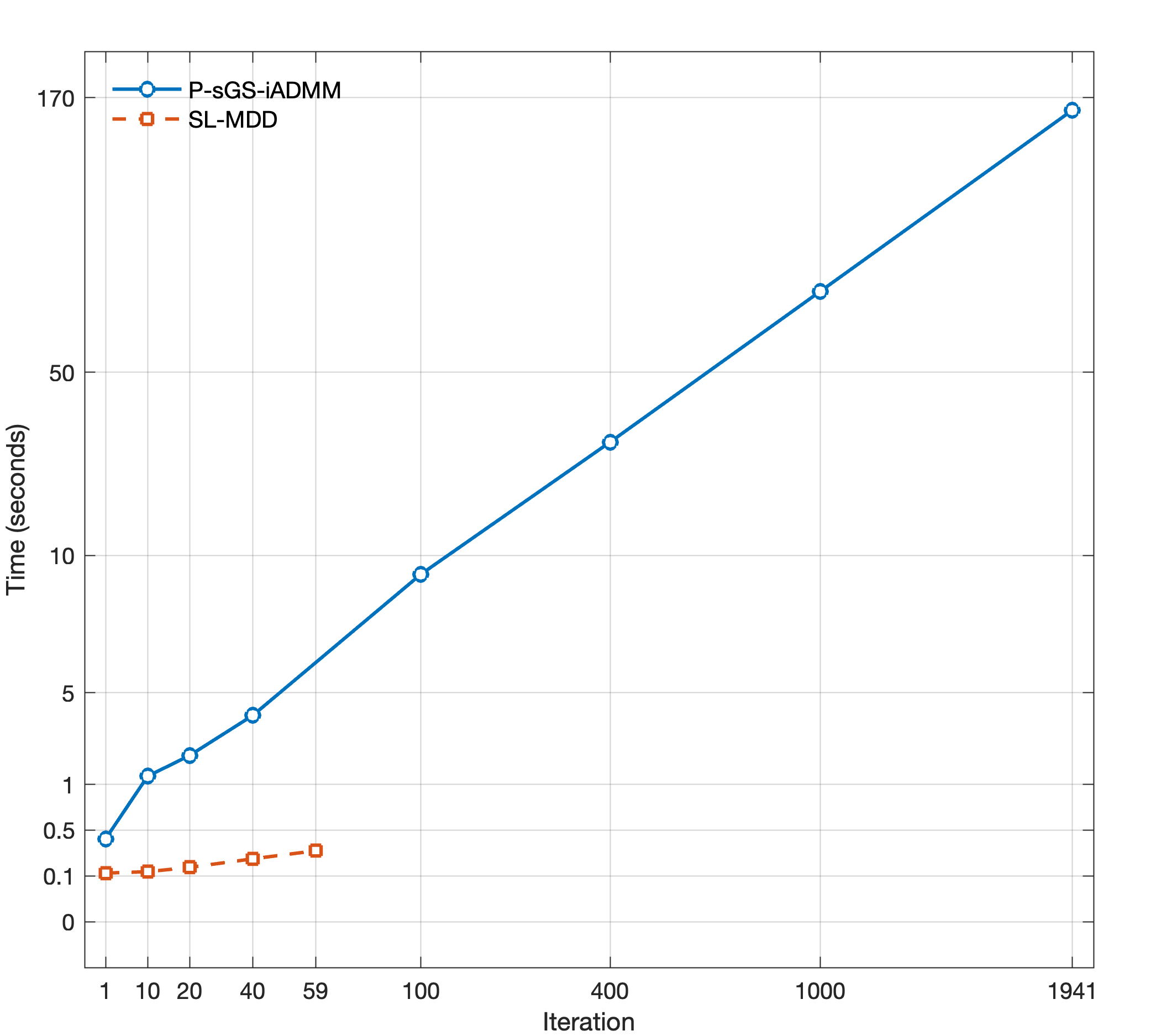}
    \\[2pt]
    \footnotesize(c) Parallel time vs iteration
  \end{minipage}
  \caption{Instance 6 (Experiment 1): $\tilde{d}=400, T=4$}
  \label{fig:Instance6}
\end{figure}

\begin{table}[htbp]
\footnotesize
\caption{Performance comparison on Experiment 1, Group 2}\label{tab:group2}
\centering
\begin{tabular}{ccccccccc}
\toprule
\multirow{2}{*}{Instances} & \multirow{2}{*}{$T$} & \multirow{2}{*}{$\tilde{d}$} & \multicolumn{2}{c}{Iter} & \multicolumn{2}{c}{Time (s)} & \multicolumn{2}{c}{Parallel Time (s)} \\
\cmidrule(r){4-5} \cmidrule(r){6-7} \cmidrule(r){8-9}
 & & & D & P & D & P & D & P \\
\midrule
        4 & 4 & 5    & 57    & 212   & 136.36   & 356.14   & 0.14     & 12.64      \\
        5 & 4 & 200  & 58    & 1581  & 281.59  & 8940.11 & 0.22     & 112.11    \\
        6 & 4 & 400  & 59    & 1941  & 466.25 & 21630.31 & 0.42   & 164.61  \\
    \bottomrule
\end{tabular}
\end{table}

\begin{figure}[htbp]
  \centering
  \begin{minipage}{0.32\textwidth}
    \centering
    \includegraphics[width=\linewidth, height=0.8\linewidth]{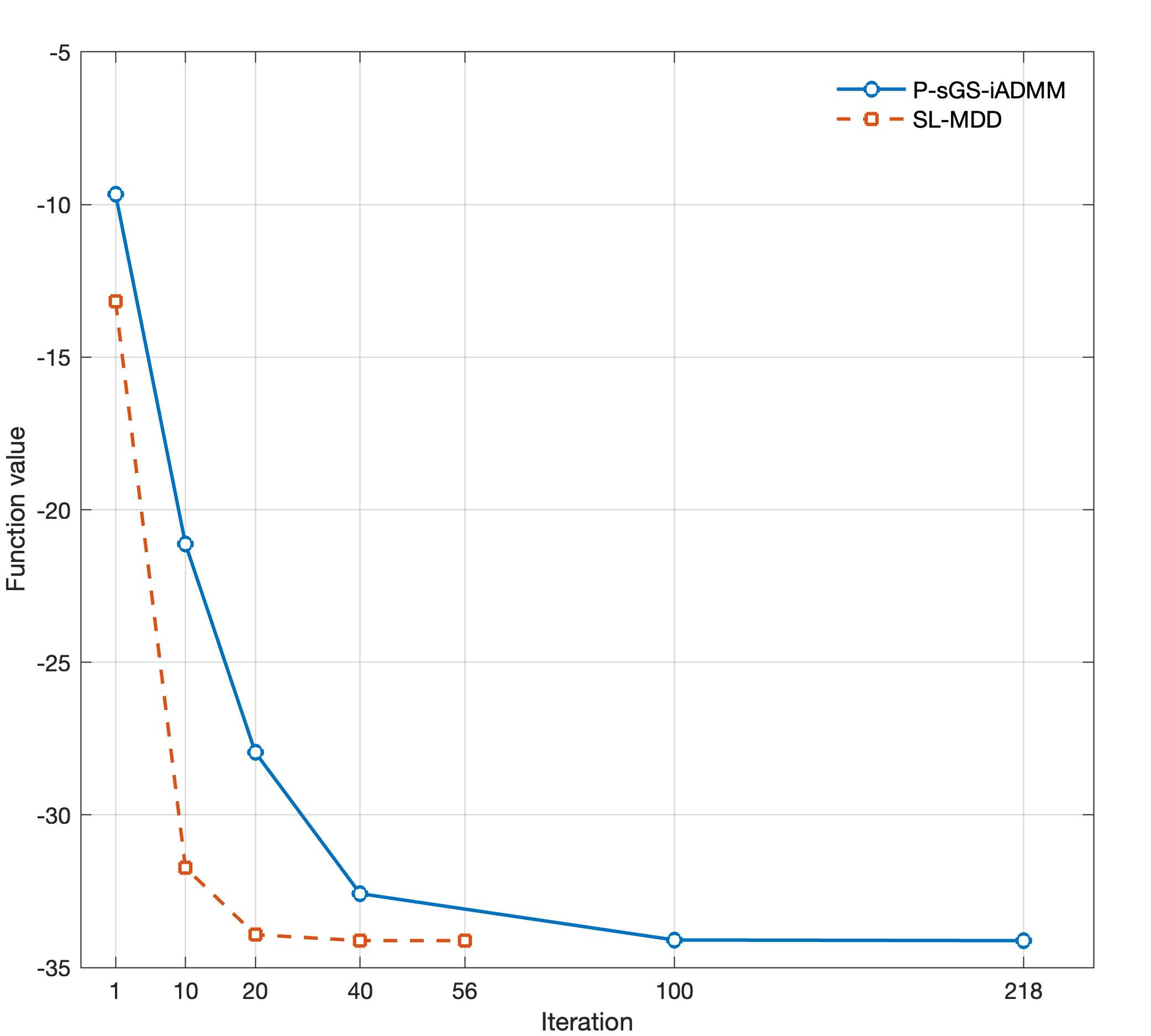}
    \\[2pt]
    \footnotesize(a) Function value vs iteration
              \end{minipage}
  \hfill
  \begin{minipage}{0.32\textwidth}
    \centering
    \includegraphics[width=\linewidth, height=0.8\linewidth]{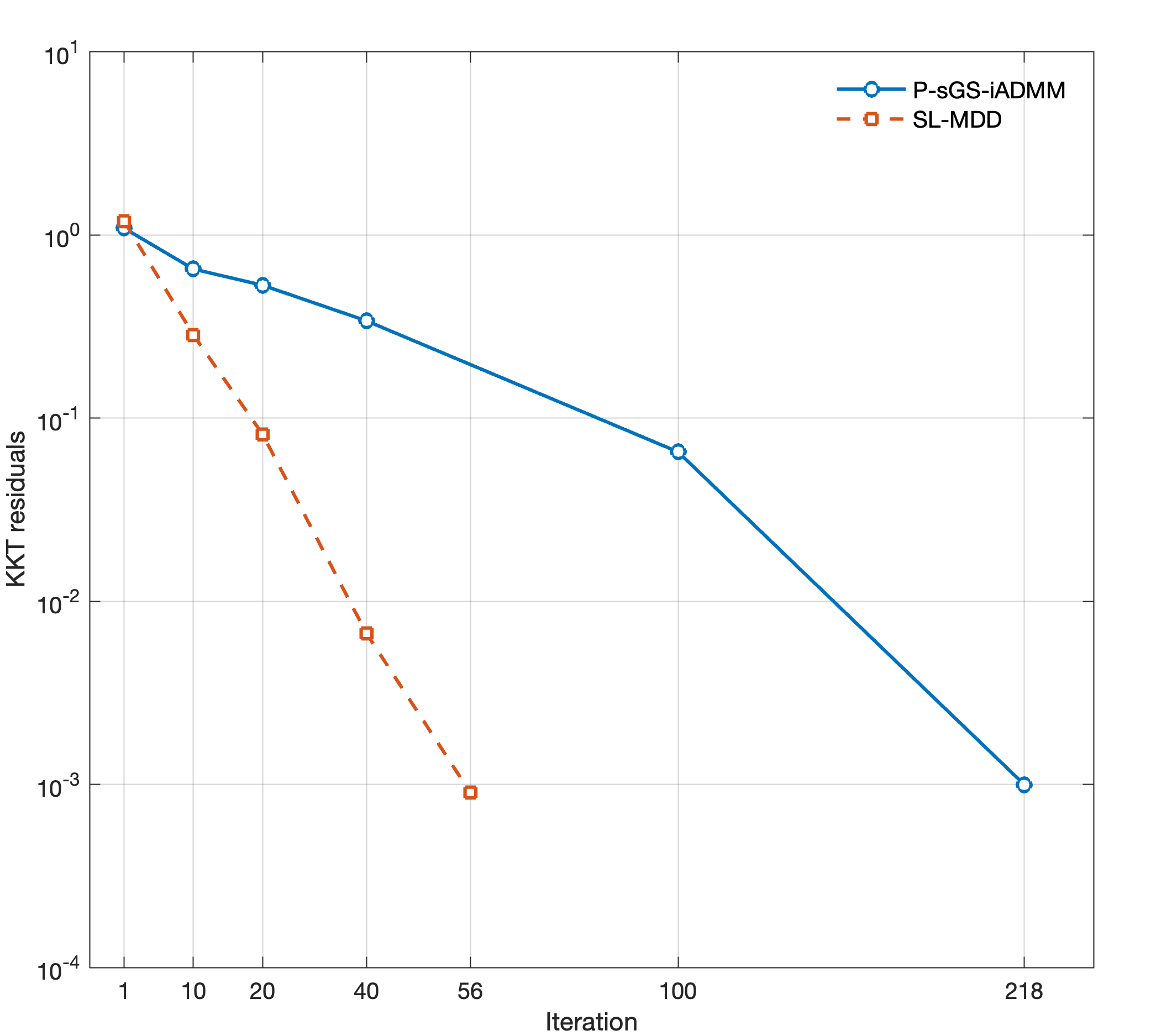}
    \\[2pt]
    \footnotesize(b) KKT residuals vs iteration
  \end{minipage}
  \hfill
  \begin{minipage}{0.32\textwidth}
    \centering
    \includegraphics[width=\linewidth, height=0.8\linewidth]{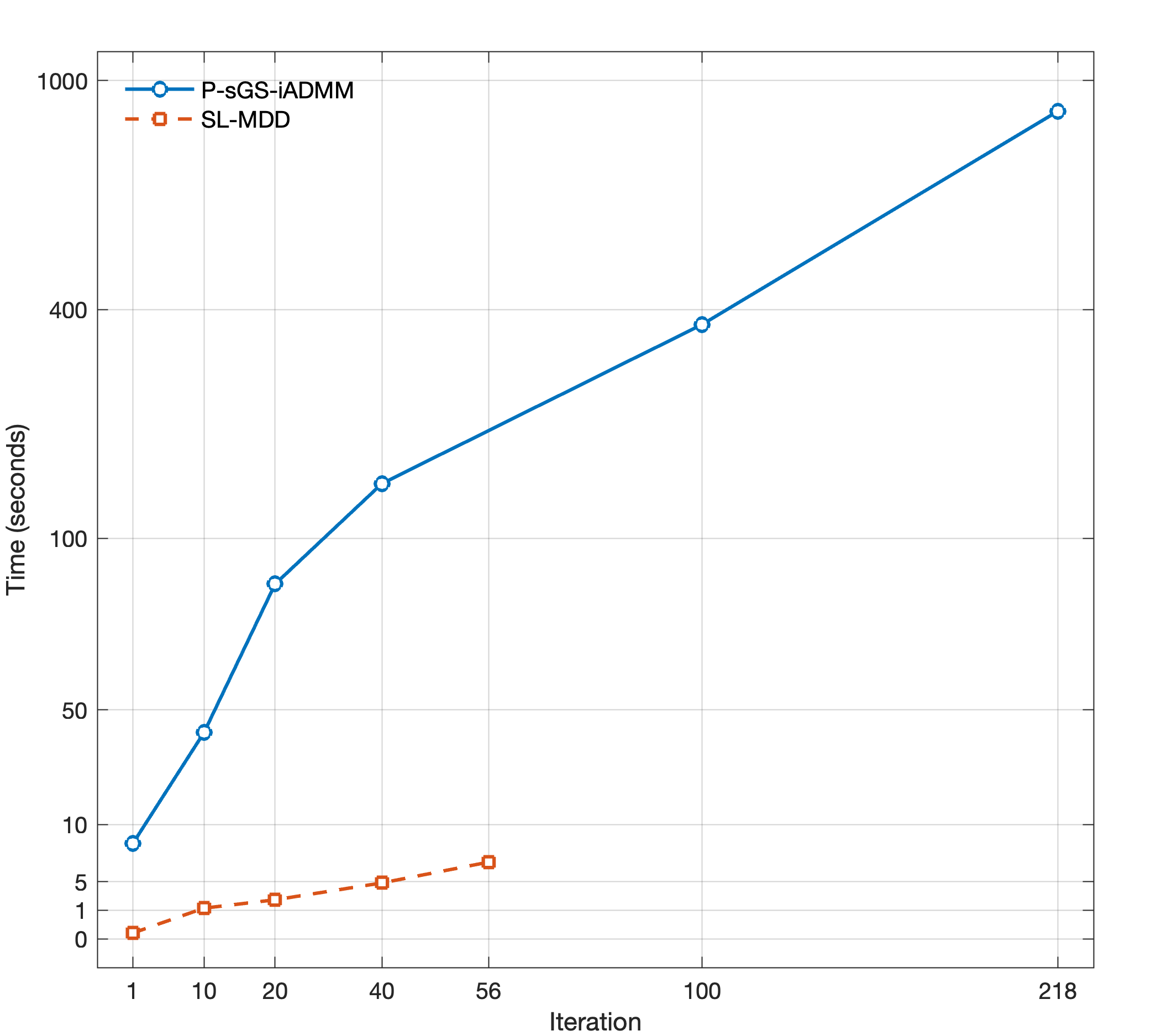}
    \\[2pt]
    \footnotesize(c) Parallel time vs iteration
  \end{minipage}
  \caption{Instance 7 (Experiment 1): $\tilde{d}=5, T=5$}
  \label{fig:Instance7}
\end{figure}

\begin{table}[htbp]
\footnotesize
\caption{Performance comparison on Experiment 1, Group 3}\label{tab:group3}
\centering
\begin{tabular}{ccccccccc}
\toprule
\multirow{2}{*}{Instances} & \multirow{2}{*}{$T$} & \multirow{2}{*}{$\tilde{d}$} & \multicolumn{2}{c}{Iter} & \multicolumn{2}{c}{Time (s)} & \multicolumn{2}{c}{Parallel Time (s)} \\
\cmidrule(r){4-5} \cmidrule(r){6-7} \cmidrule(r){8-9}
 & & & D & P & D & P & D & P \\
\midrule
        1 & 3 & 5    & 52    & 151   & 1.53      & 2.66     & 0.02      & 0.14      \\
        4 & 4 & 5    & 57    & 212   & 136.36   & 356.14   & 0.14     & 12.64      \\
        7 & 5 & 5    & 56    & 218   & 7425.55  & 73910.83 & 6.71      &919.79   \\
  \bottomrule
\end{tabular}
\end{table}

We begin by analyzing the evolution of function value and KKT residuals w.r.t. the iteration. As shown in Figures \ref{fig:Instance1}--\ref{fig:Instance7}, 
the proposed SL-MDD method reaches the prescribed termination criterion and satisfies the KKT conditions in substantially fewer iterations than P-sGS-iADMM.


	Regarding the estimated parallel computation time, the results in Tables \ref{tab:group1}--\ref{tab:group3} (where D denotes SL-MDD method and P denotes P-sGS-iADMM) compare the scalability of the two methods. For $T=3$ (Figures \ref{fig:Instance1}--\ref{fig:Instance3} and Table \ref{tab:group1}), as the variable dimension $\tilde{d}$ increases from $5$ to $200$ and $400$, the runtime for D grows gradually from $0.02$ s to $0.19$ s and $0.32$ s, whereas P increases from $0.14$ s to $2.60$ s and $3.45$ s. This performance difference becomes more apparent for $T=4$ (Figures \ref{fig:Instance4}--\ref{fig:Instance6} and Table \ref{tab:group2}); under the same dimensional scaling, the runtime for D remains efficient, increasing from $0.14$ s to $0.22$ s and $0.42$ s, while the time for P extends from $12.64$ s to $112.11$ s and $164.61$ s. Additionally, when fixing $\tilde{d}=5$ and varying $T \in \{3, 4, 5\}$ (Figures \ref{fig:Instance1},\ref{fig:Instance4},\ref{fig:Instance7} and Table \ref{tab:group3}), the runtime for D increases from $0.02$~s to $0.14$ s and $6.71$ s, compared to $0.14$ s, $12.64$ s and $919.79$ s for P.

Overall, Tables~\ref{tab:group1}--\ref{tab:group3} indicate that the SL-MDD method is more efficient than P-sGS-iADMM in both iteration count and runtime, with particularly pronounced improvements in higher-dimensional instances and multi-stage settings.


\subsection{MSP with exponential-$\ell_1$ composite objective}
Next, we consider problem \eqref{MSP} with an exponential-$\ell_1$ composite objective combining $\ell_1$-regularization and an exponential utility term \cite{pedersen2003utility}. For all stages $t=1,\dots,T$, we set $\theta_t\big(x_t(\xi_{[t]}^{k_{[t]}})\big)=\|x_t(\xi_{[t]}^{k_{[t]}})\|_1$, $h_t\big(x_t(\xi_{[t]}^{k_{[t]}})\big)=\exp\big(0.2\sum_{i=1}^d x_{t,i}(\xi_{[t]}^{k_{[t]}})\big)$, and enforce the box constraints $\mathcal{K}_t=\{x_t\in\mathbb{R}^d\mid \bar{l}_t\le x_t\le \bar{u}_t\}$ with $\bar{l}_t=-2$ and $\bar{u}_t=10$. 
To instantiate the problem data, the discrete scenario tree is constructed with $N_t=40$ branches per stage, where the underlying realizations are sampled via $\xi_2 \sim \mathcal{U}[0,1]$ and $\xi_t \sim \mathcal{U}[\xi_{t-1},\xi_{t-1}+1]$ for $t \geq 3$. Specifically, at stage $1$, we set $A_1 \sim \mathcal{U}[0,1]^{\ell\times d}$ and $b_1=4$. For stages $t \ge 2$, the parameters depend on scenario realizations such that $B_t(\xi_{[t]})=\xi_{[t]}B$, $A_t(\xi_{[t]})=\xi_{[t]}A$, and $b_t(\xi_{[t]})=10\,\xi_{[t]}$, where the entries of $A \in \mathbb{R}^{\ell\times d}$ and $B \in \mathbb{R}^{\ell\times d}$ are drawn i.i.d. from $\mathcal{U}[0,1]$ and $\mathcal{U}[0,2]$, respectively. Finally, we evaluate both algorithms on 7 instances of problem \eqref{MSP} using the same dimensions $d$ and stages $T$ as in Section \ref{sec:exp1}.
\begin{figure}[htbp]
  \centering
  \begin{minipage}{0.32\textwidth}
    \centering
    \includegraphics[width=\linewidth, height=0.8\linewidth]{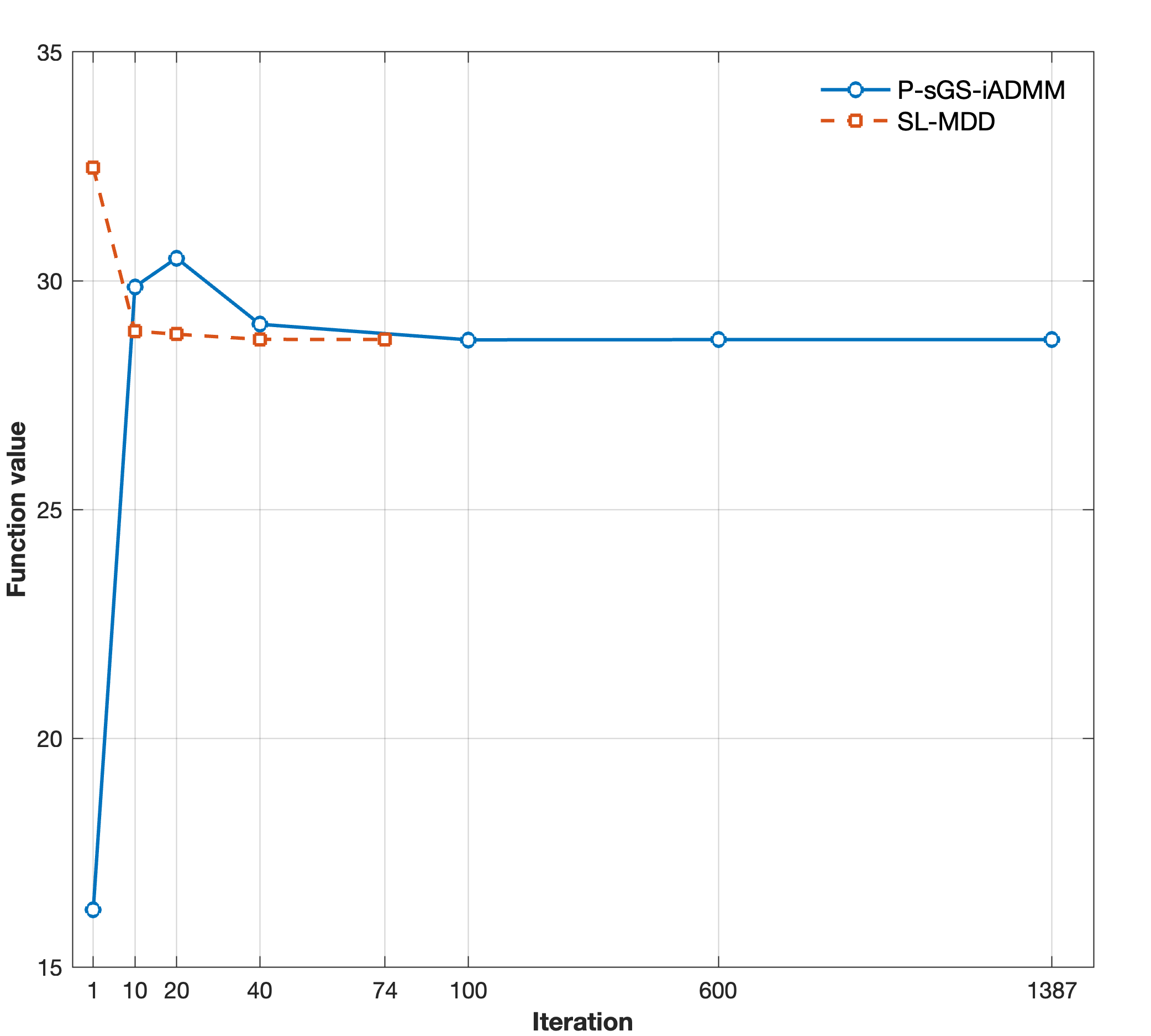}
    \\[2pt]
    \footnotesize(a) Function value vs iteration
  \end{minipage}
  \hfill
  \begin{minipage}{0.32\textwidth}
    \centering
    \includegraphics[width=\linewidth, height=0.8\linewidth]{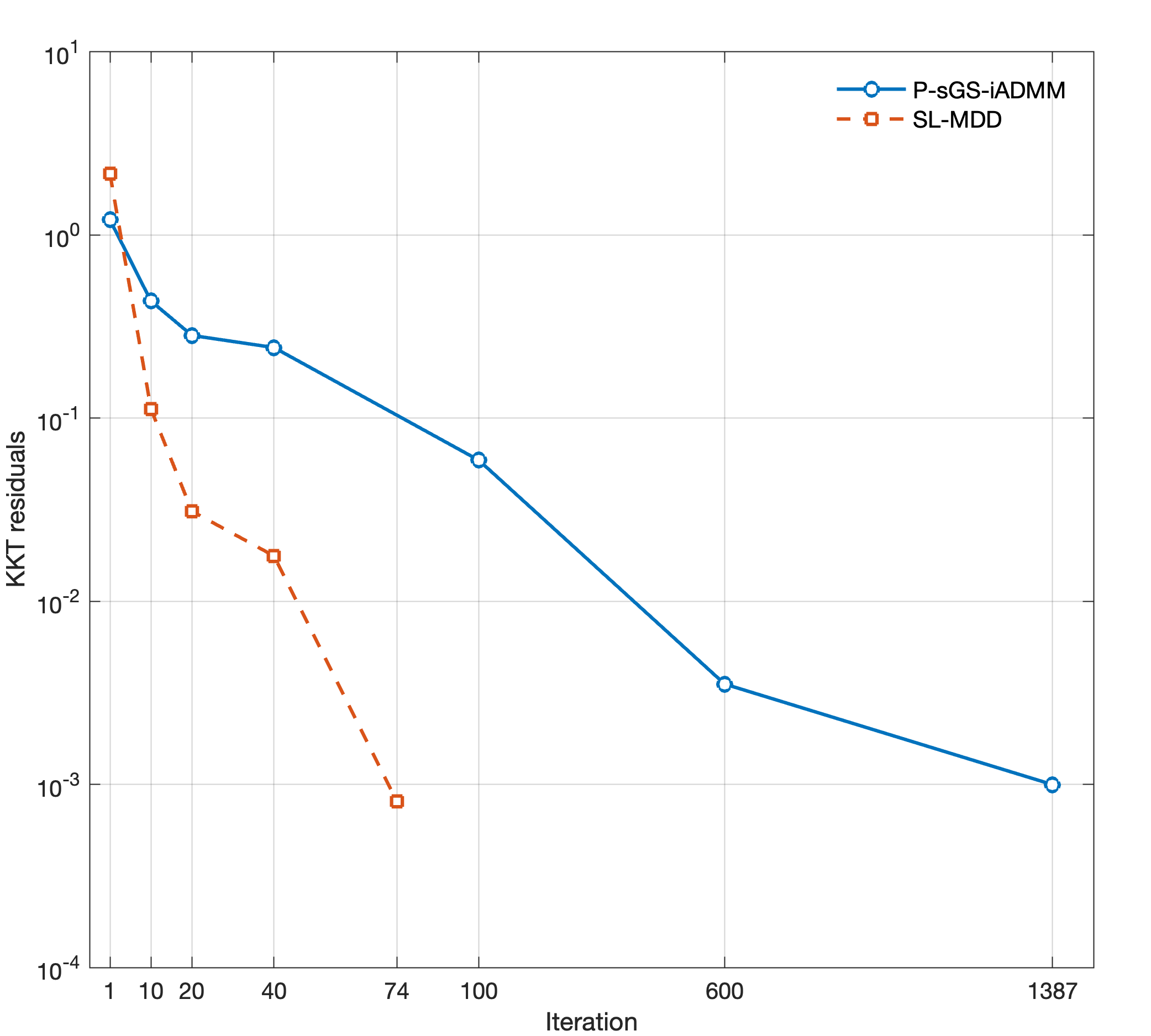}
    \\[2pt]
    \footnotesize(b) KKT residuals vs iteration
  \end{minipage}
  \hfill
  \begin{minipage}{0.32\textwidth}
    \centering
    \includegraphics[width=\linewidth, height=0.8\linewidth]{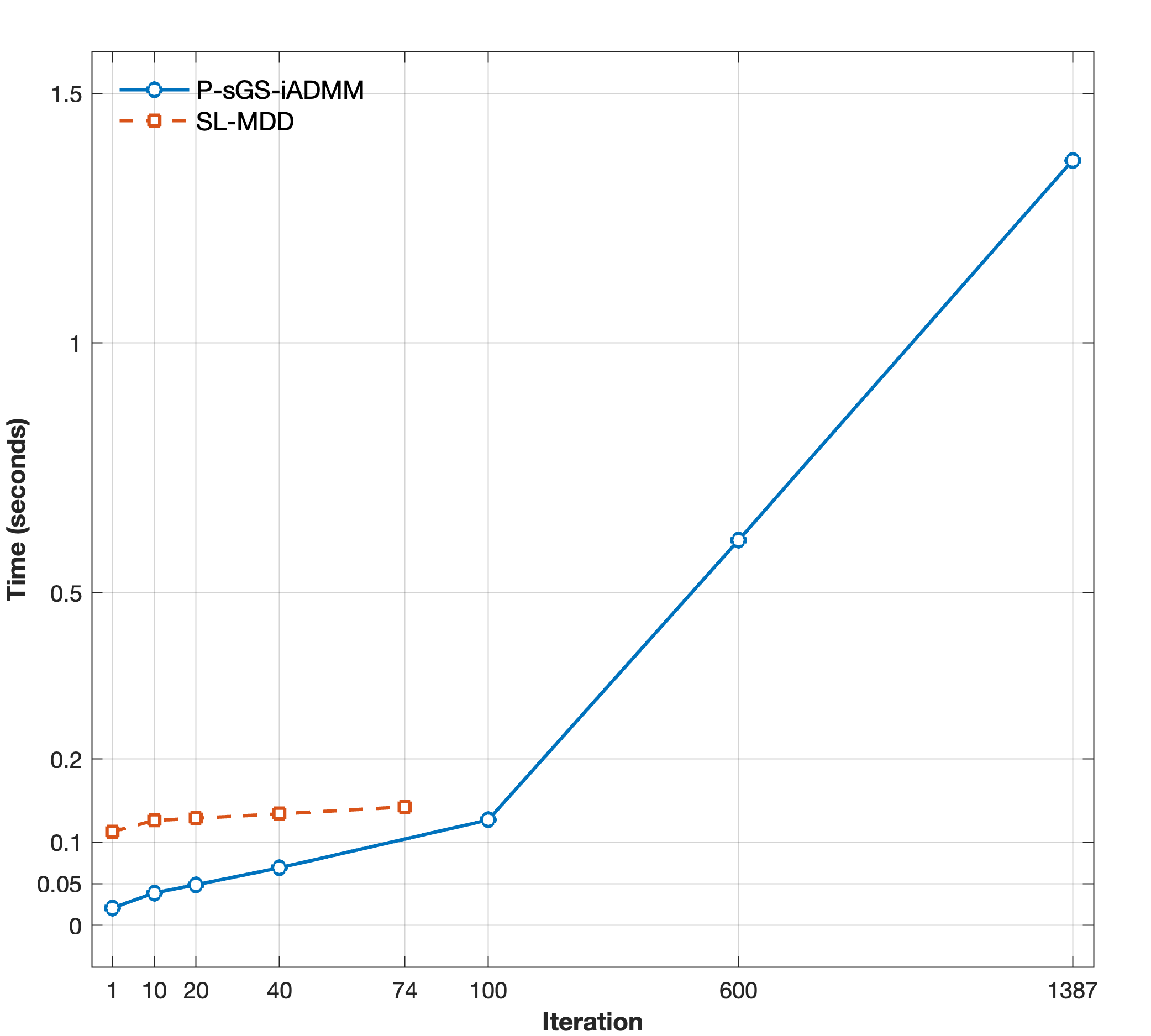}
    \\[2pt]
    \footnotesize(c) Parallel time vs iteration
  \end{minipage}
  \caption{Instance 1 (Experiment 2): $d=5, T=3$}
  \label{ffig:Instance1}
\end{figure}
\begin{figure}[htbp]
  \centering
  \begin{minipage}{0.32\textwidth}
    \centering
    \includegraphics[width=\linewidth, height=0.8\linewidth]{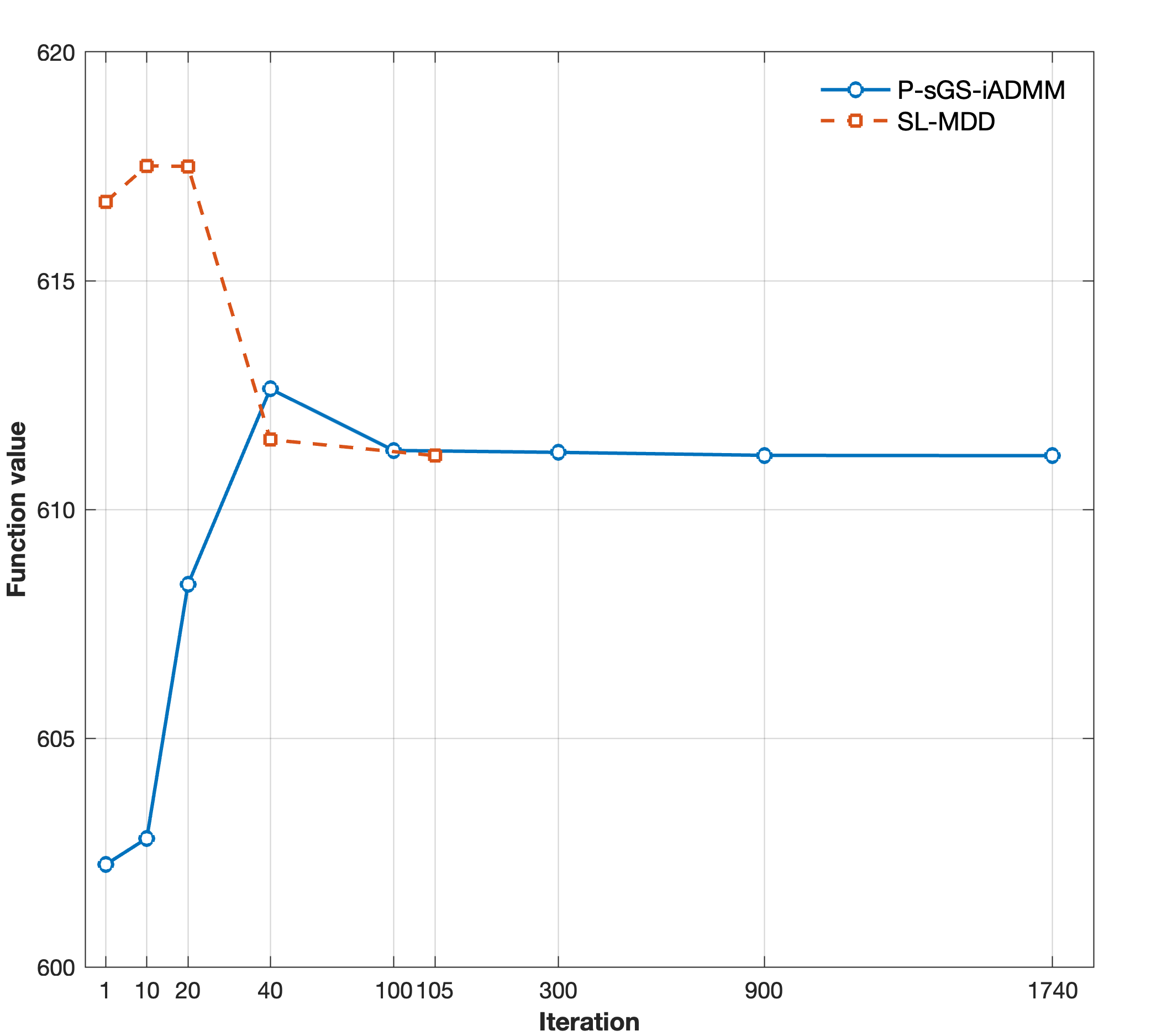}
    \\[4pt]
    \footnotesize(a) Function value vs iteration
  \end{minipage}
  \hfill
  \begin{minipage}{0.32\textwidth}
    \centering
    \includegraphics[width=\linewidth, height=0.8\linewidth]{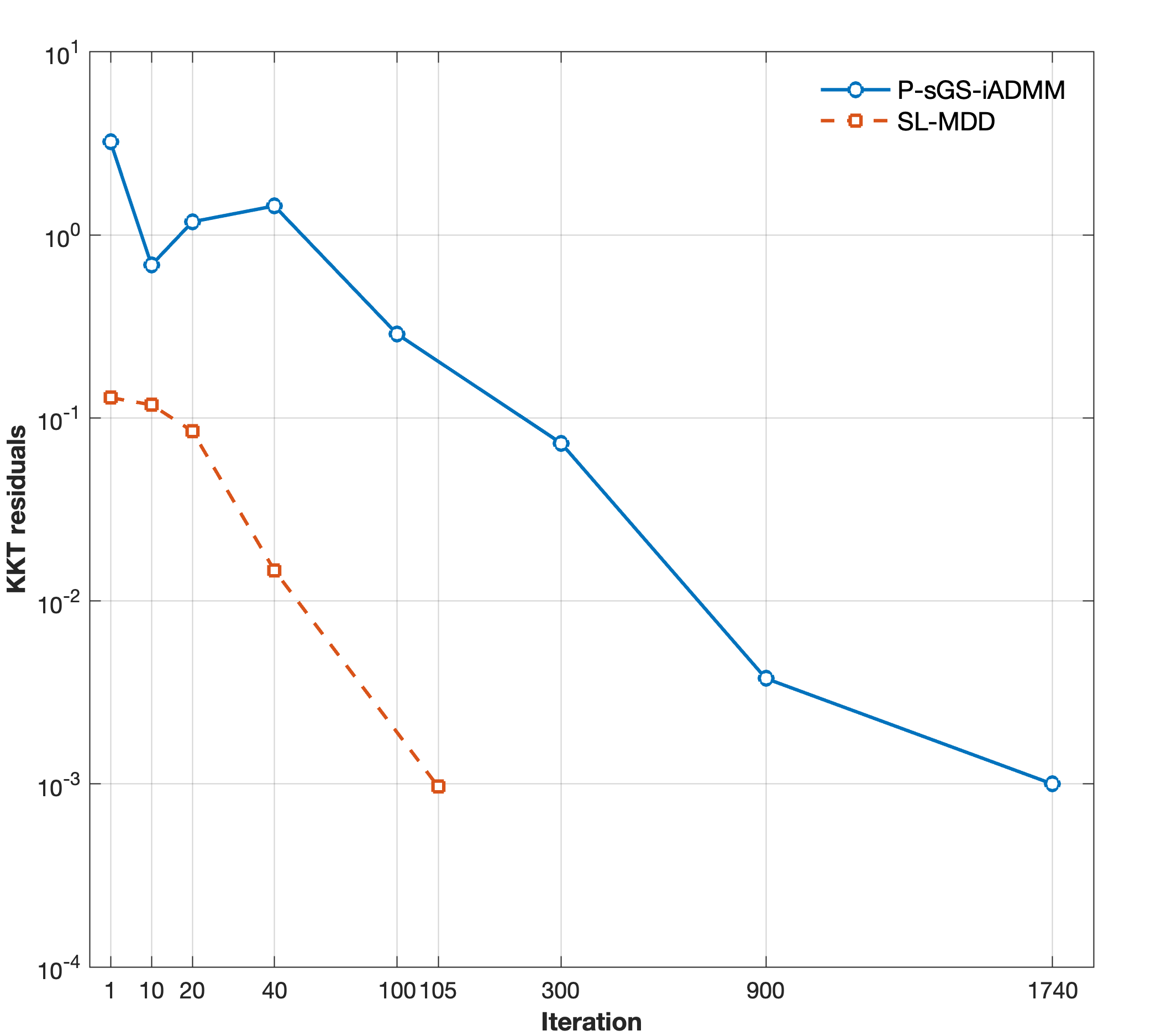}
    \\[2pt]
    \footnotesize(b) KKT residuals vs iteration
  \end{minipage}
  \hfill
  \begin{minipage}{0.32\textwidth}
    \centering
    \includegraphics[width=\linewidth, height=0.8\linewidth]{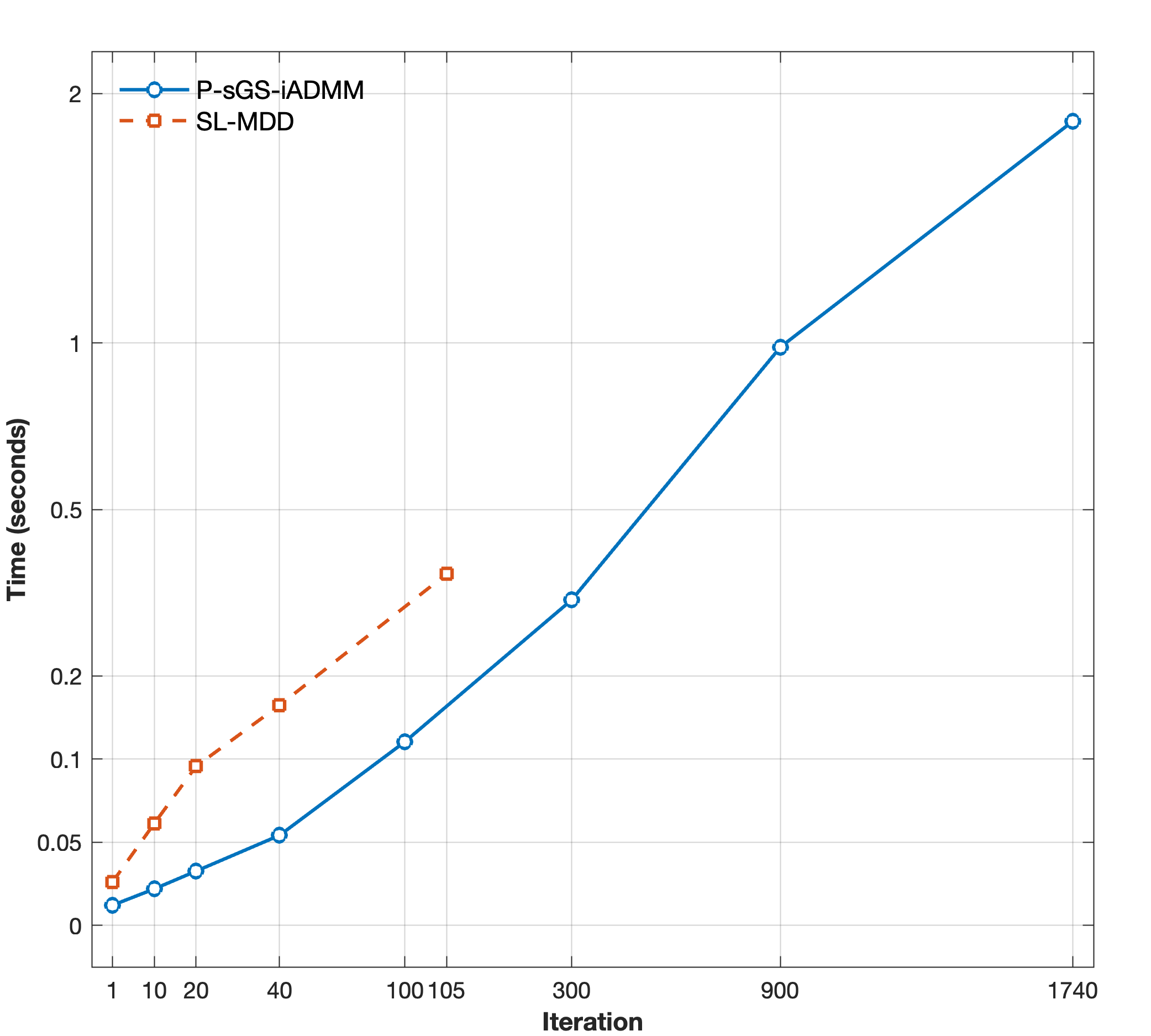}
    \\[2pt]
    \footnotesize(c) Parallel time vs iteration
  \end{minipage}
  \caption{Instance 2 (Experiment 2): $d=200, T=3$}
  \label{ffig:Instance2}
\end{figure}
\begin{figure}[htbp]
  \centering
  \begin{minipage}{0.32\textwidth}
    \centering
    \includegraphics[width=\linewidth, height=0.8\linewidth]{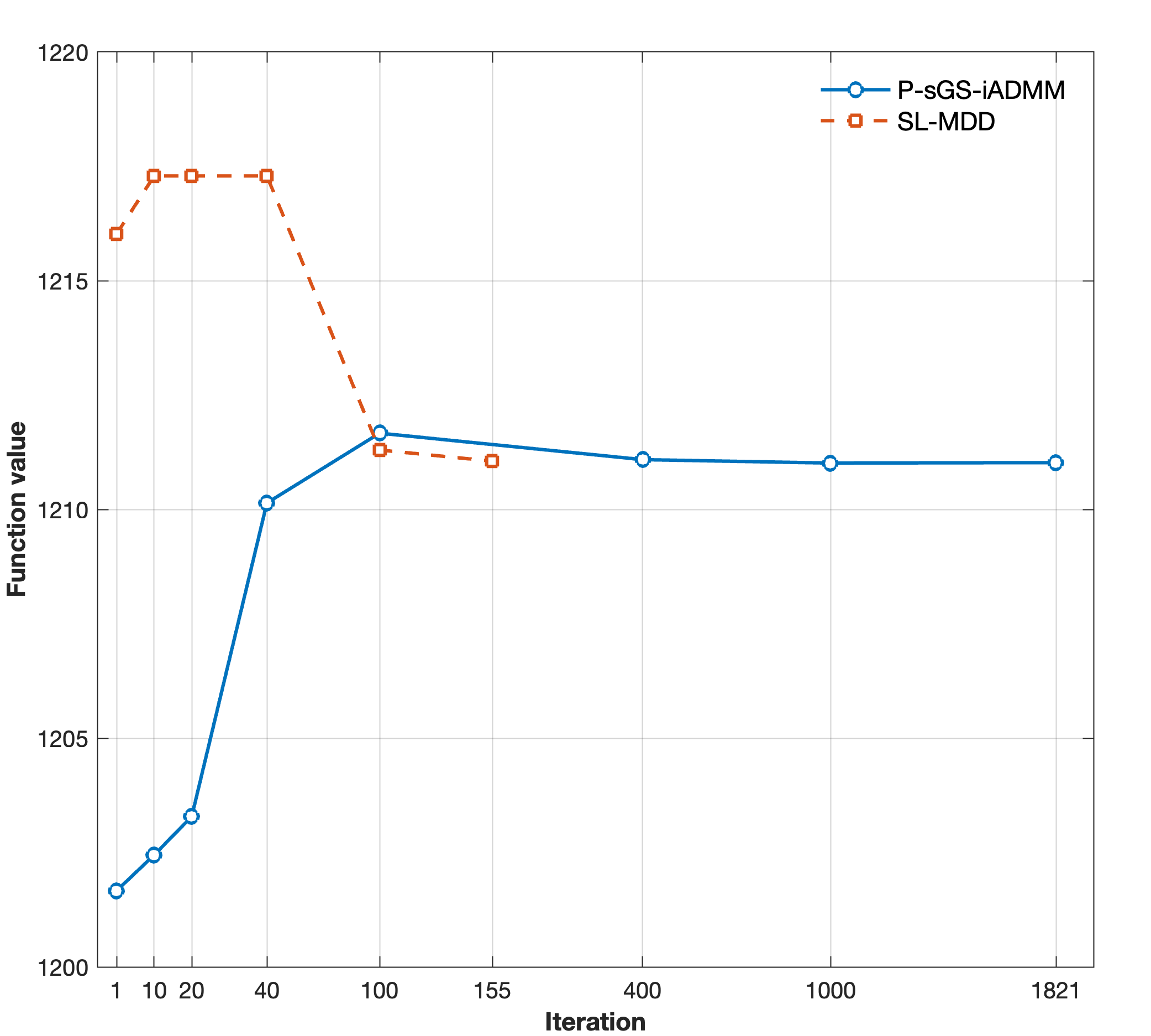}
    \\[2pt]
    \footnotesize(a) Function value vs iteration
  \end{minipage}
  \hfill
  \begin{minipage}{0.32\textwidth}
    \centering
    \includegraphics[width=\linewidth, height=0.8\linewidth]{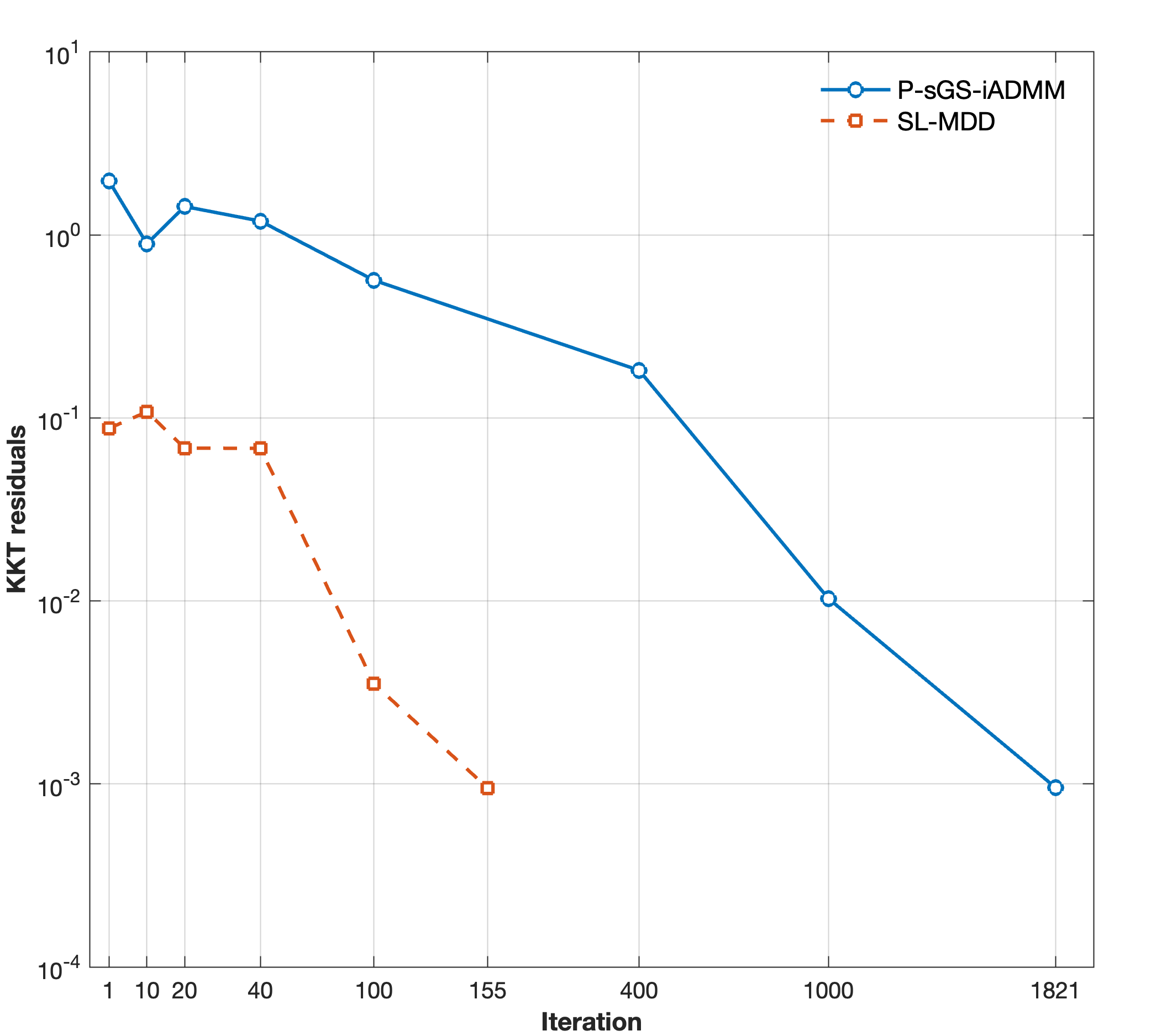}
    \\[2pt]
    \footnotesize(b) KKT residuals vs iteration
  \end{minipage}
  \hfill
  \begin{minipage}{0.32\textwidth}
    \centering
    \includegraphics[width=\linewidth, height=0.8\linewidth]{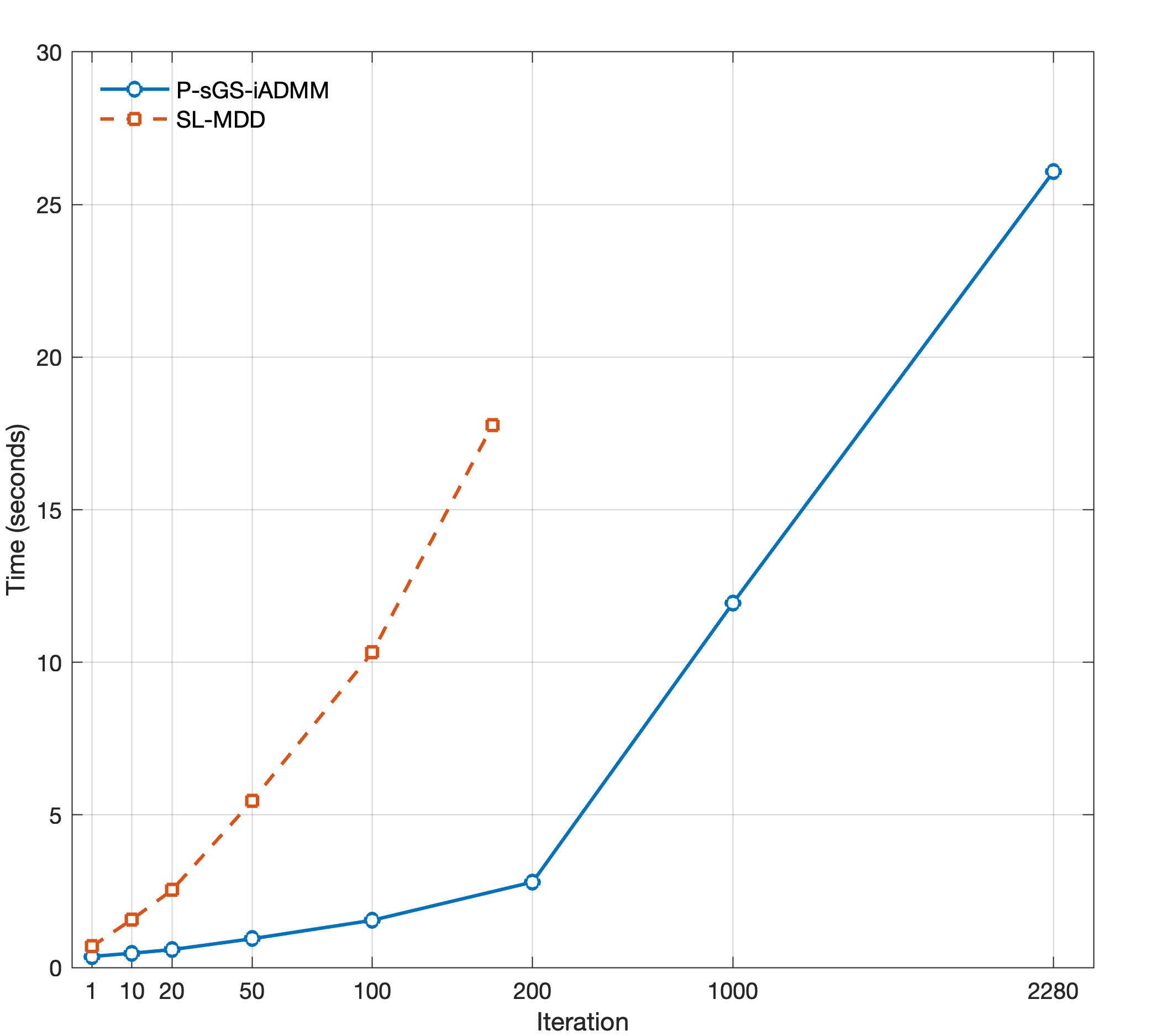}
    \\[2pt]
    \footnotesize(c) Parallel time vs iteration
  \end{minipage}
  \caption{Instance 3 (Experiment 2): $d=400, T=3$}
  \label{ffig:Instance3}
\end{figure}
\begin{table}[htbp]
\footnotesize
\caption{Performance comparison on Experiment 2, Group 1}\label{tab:performance4}
\centering
\begin{tabular}{ccccccccc}
\toprule
\multirow{2}{*}{Instances} & \multirow{2}{*}{$T$} & \multirow{2}{*}{$d$} & \multicolumn{2}{c}{Iter} & \multicolumn{2}{c}{Time (s)} & \multicolumn{2}{c}{Parallel Time (s)} \\
\cmidrule(r){4-5} \cmidrule(r){6-7} \cmidrule(r){8-9}
 & & & D & P & D & P & D & P \\
\midrule
         1 & 3 & 5    & 74    & 1387  & 2.66      & 34.89     &0.14     & 1.37      \\
        2 & 3 & 200  &  105    & 1740  & 12.66    & 148.05    & 0.38     & 1.89     \\
        3 & 3 & 400  &  155    & 1821  &  30.75   & 392.51   & 0.86      & 4.21    \\\bottomrule
\end{tabular}
\end{table}
\begin{figure}[htbp]
  \centering
  \begin{minipage}{0.32\textwidth}
    \centering
    \includegraphics[width=\linewidth, height=0.8\linewidth]{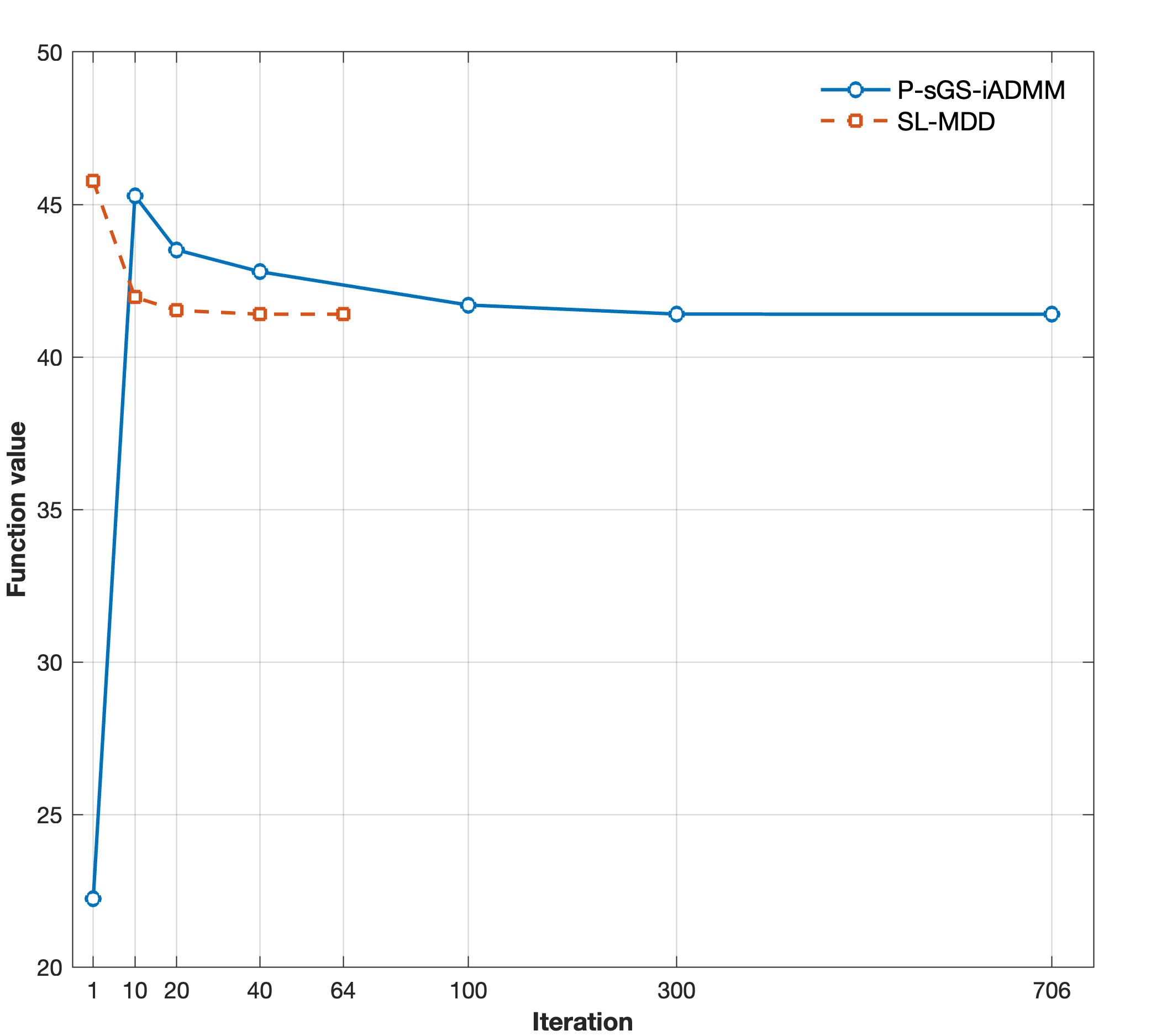}
    \\[2pt]
    \footnotesize(a) Function value vs iteration
  \end{minipage}
  \hfill
  \begin{minipage}{0.32\textwidth}
    \centering
    \includegraphics[width=\linewidth, height=0.8\linewidth]{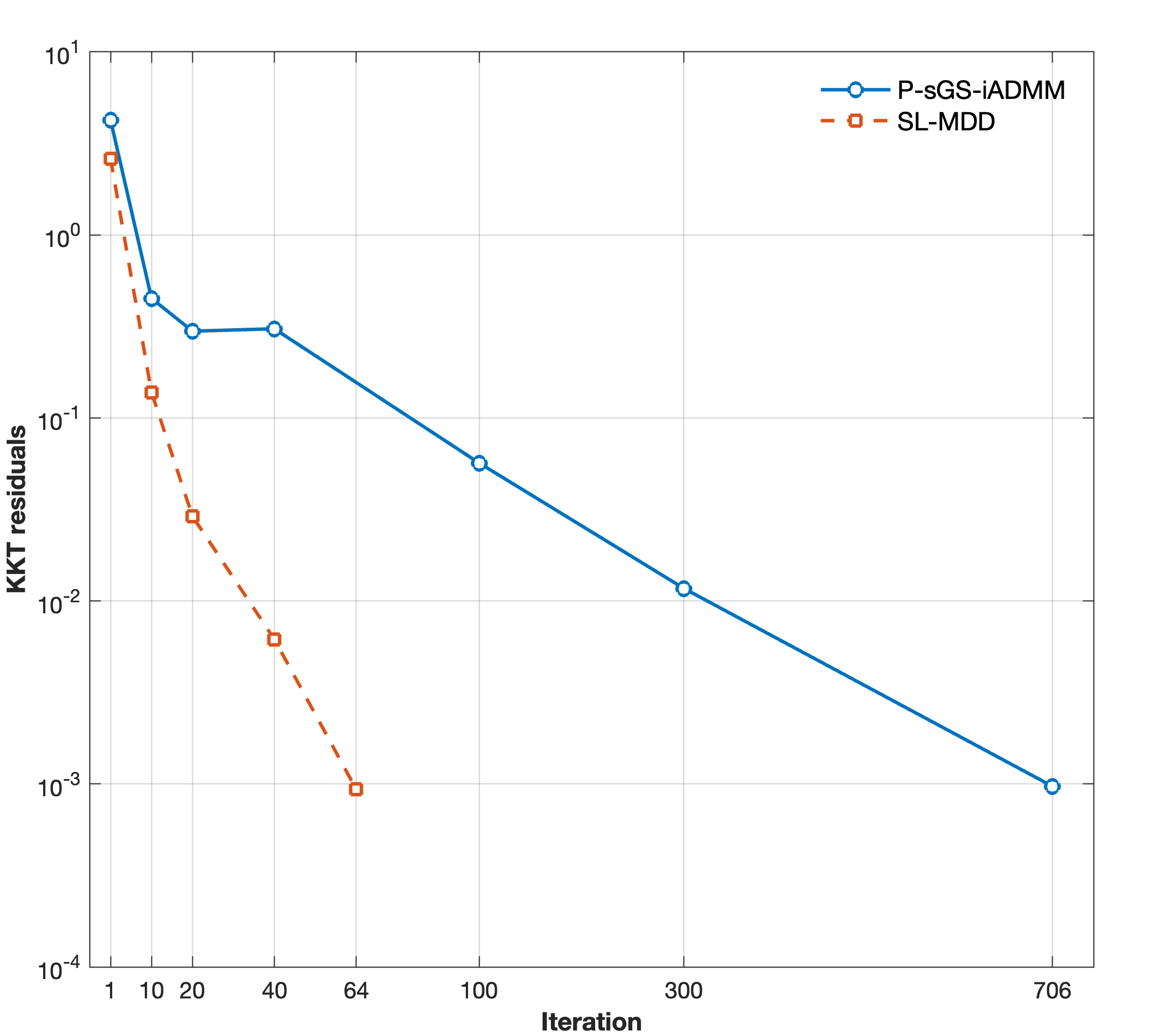}
    \\[2pt]
    \footnotesize(b) KKT residuals vs iteration
  \end{minipage}
  \hfill
  \begin{minipage}{0.32\textwidth}
    \centering
    \includegraphics[width=\linewidth, height=0.8\linewidth]{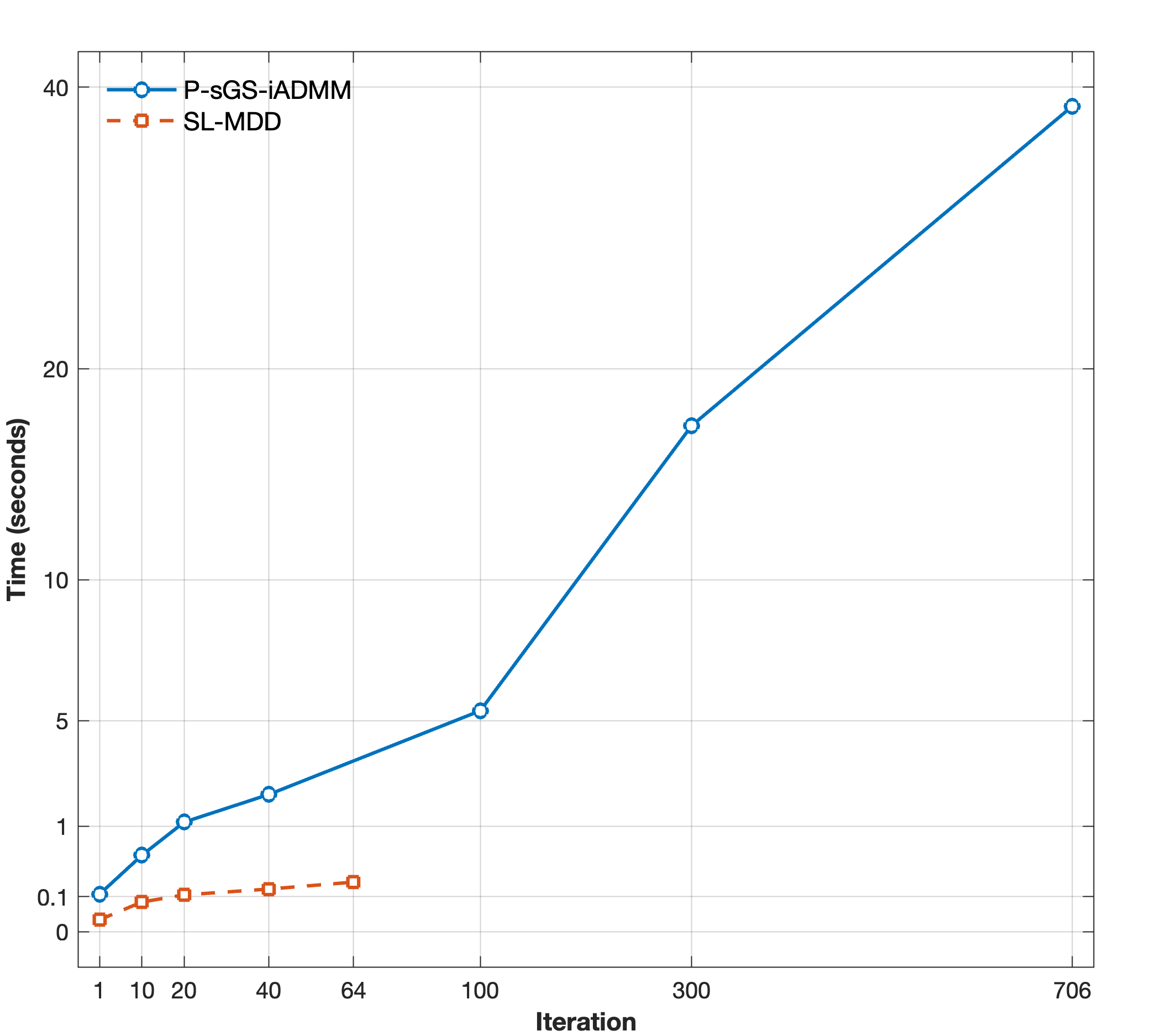}
    \\[2pt]
    \footnotesize(c) Parallel time vs iteration
  \end{minipage}
  \caption{Instance 4 (Experiment 2): $d=5, T=4$}
  \label{ffig:Instance4}
\end{figure}

\begin{figure}[htbp]
  \centering
  \begin{minipage}{0.32\textwidth}
    \centering
    \includegraphics[width=\linewidth, height=0.8\linewidth]{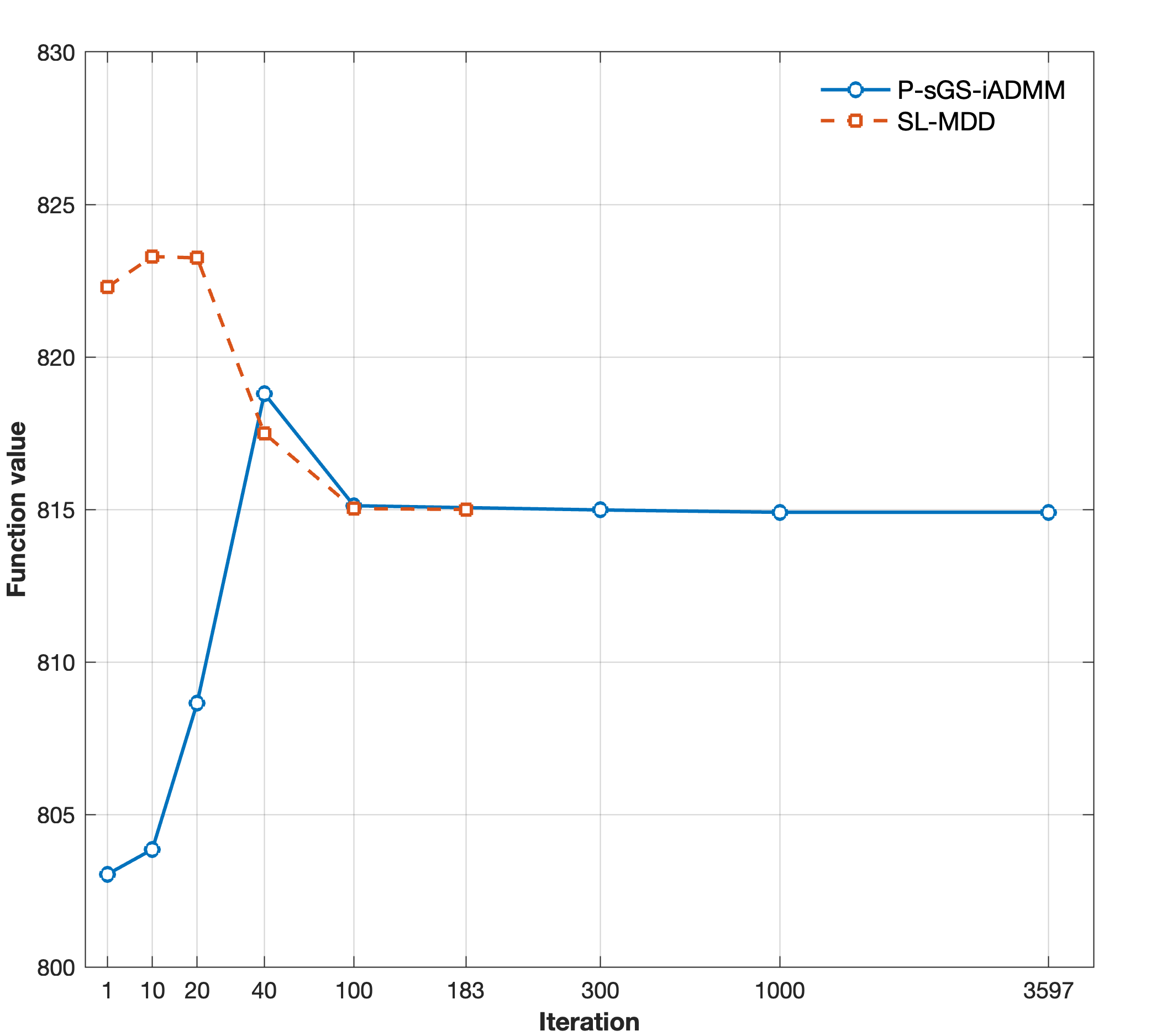}
    \\[2pt]
    \footnotesize(a) Function value vs iteration
  \end{minipage}
  \hfill
  \begin{minipage}{0.32\textwidth}
    \centering
    \includegraphics[width=\linewidth, height=0.8\linewidth]{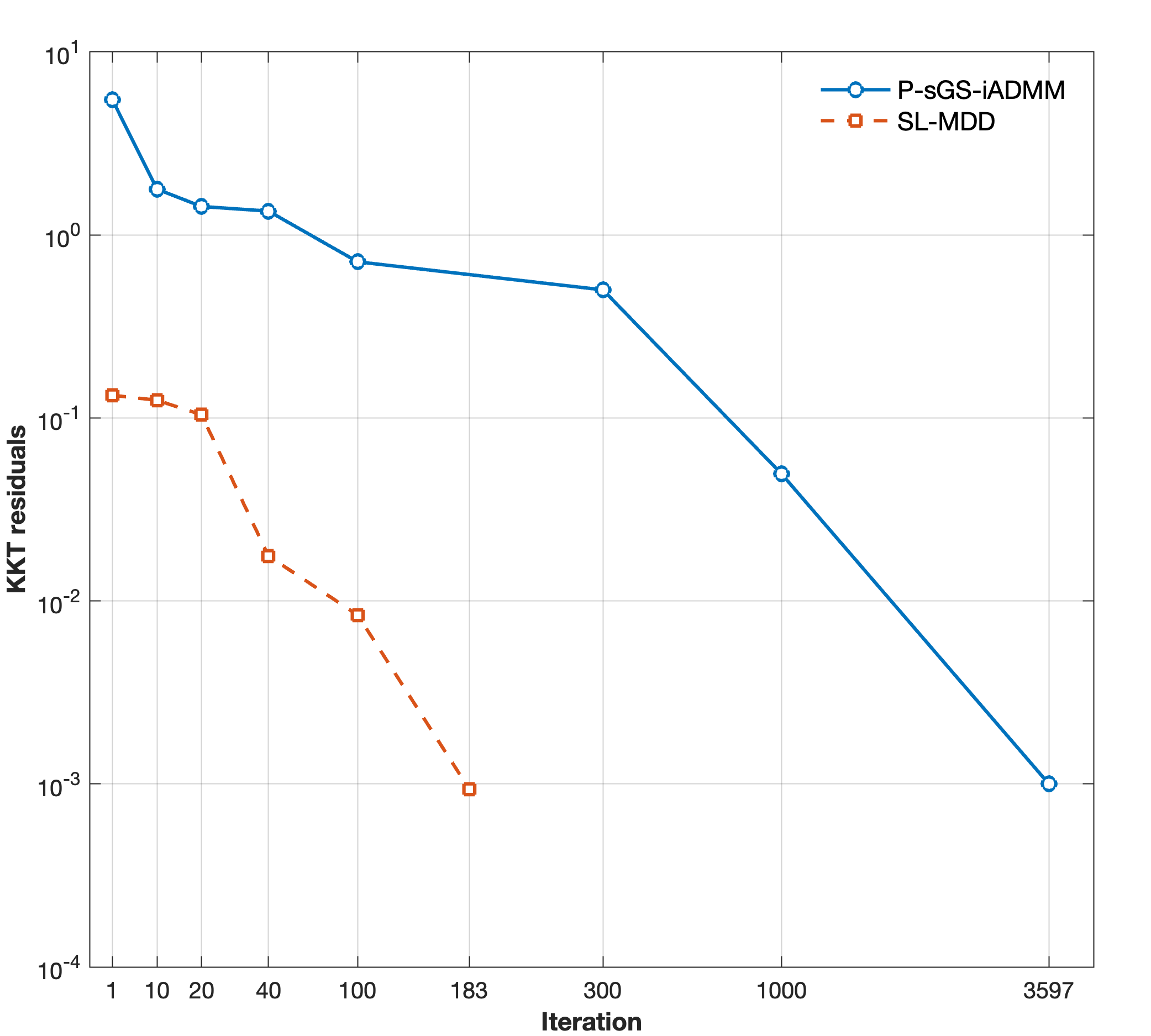}
    \\[2pt]
    \footnotesize(b) KKT residuals vs iteration
  \end{minipage}
  \hfill
  \begin{minipage}{0.32\textwidth}
    \centering
    \includegraphics[width=\linewidth, height=0.8\linewidth]{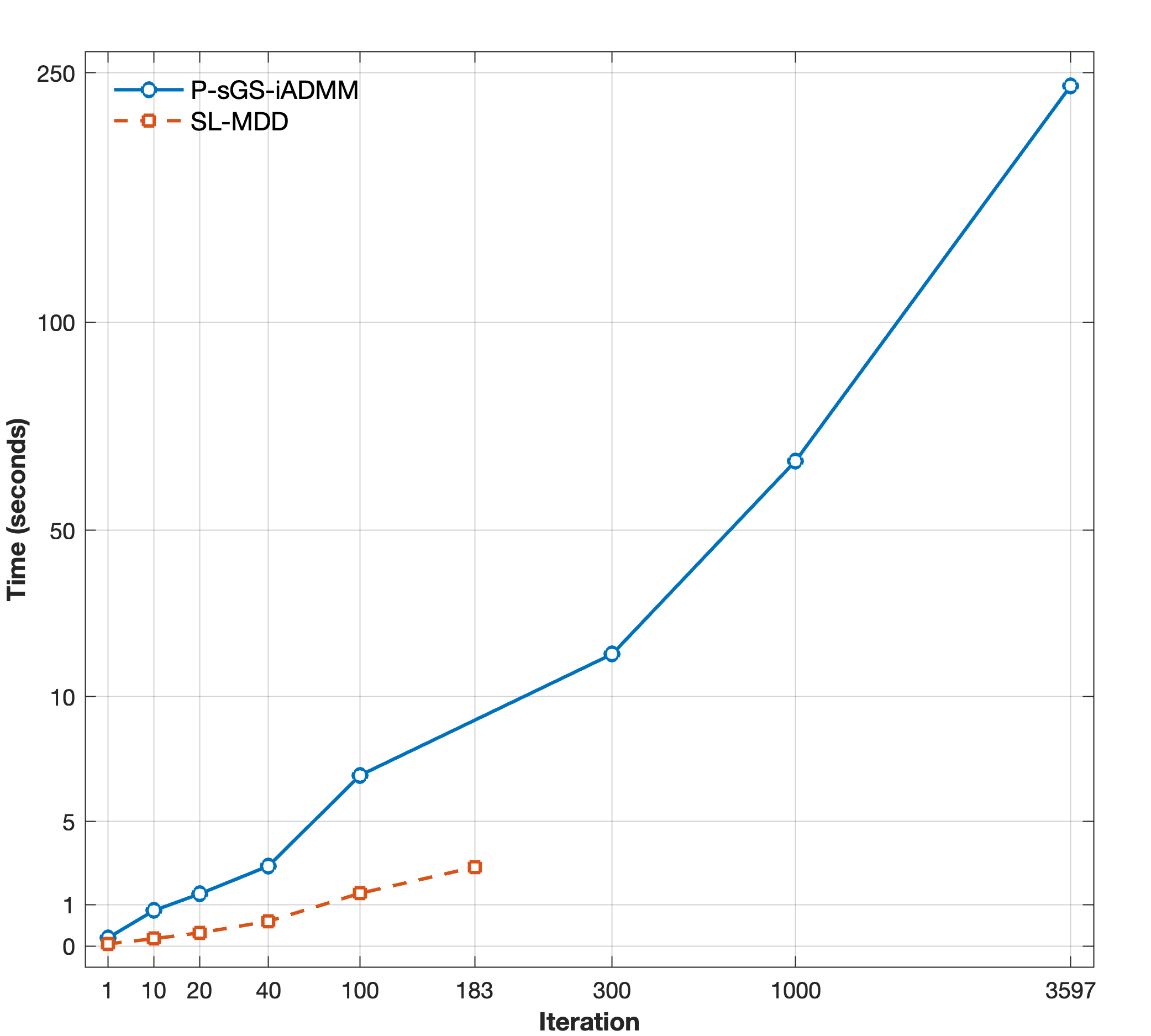}
    \\[2pt]
    \footnotesize(c) Parallel time vs iteration
  \end{minipage}
  \caption{Instance 5 (Experiment 2): $d=200, T=4$}
  \label{ffig:Instance5}
\end{figure}

\begin{figure}[htbp]
  \centering
  \begin{minipage}{0.32\textwidth}
    \centering
    \includegraphics[width=\linewidth, height=0.8\linewidth]{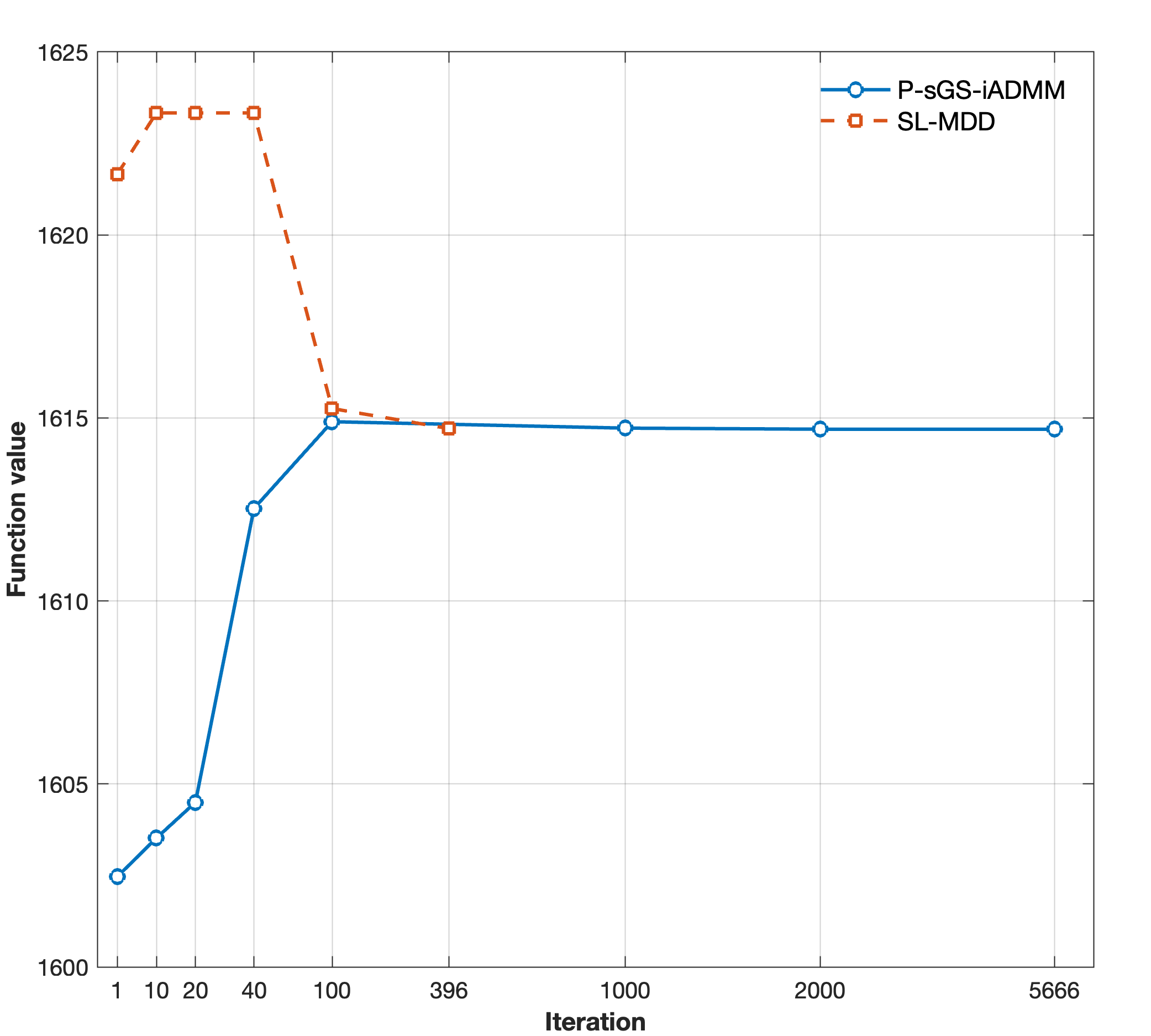}
    \\[2pt]
    \footnotesize(a) Function value vs iteration
  \end{minipage}
  \hfill
  \begin{minipage}{0.32\textwidth}
    \centering
    \includegraphics[width=\linewidth, height=0.8\linewidth]{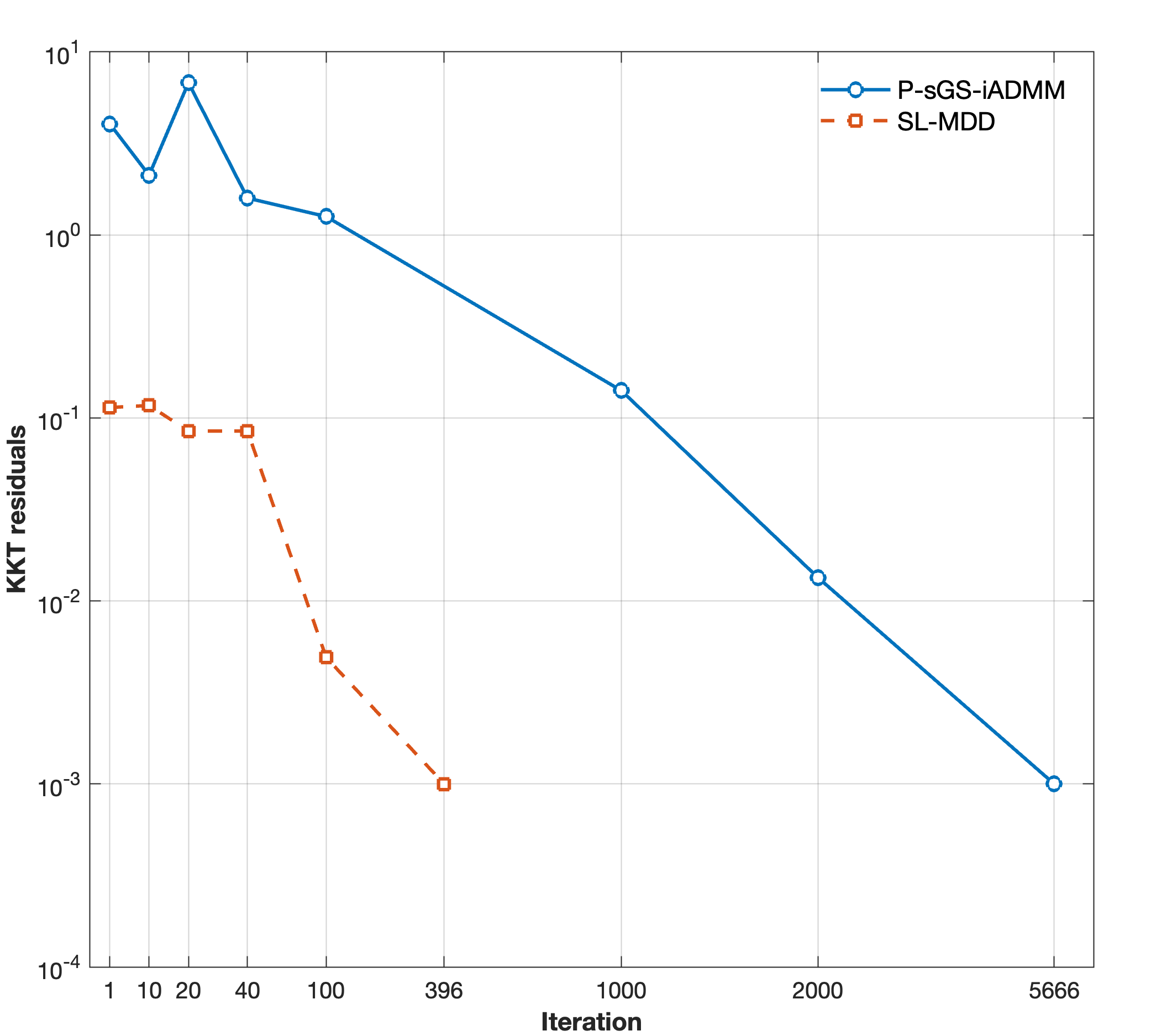}
    \\[2pt]
    \footnotesize(b) KKT residuals vs iteration
  \end{minipage}
  \hfill
  \begin{minipage}{0.32\textwidth}
    \centering
    \includegraphics[width=\linewidth, height=0.8\linewidth]{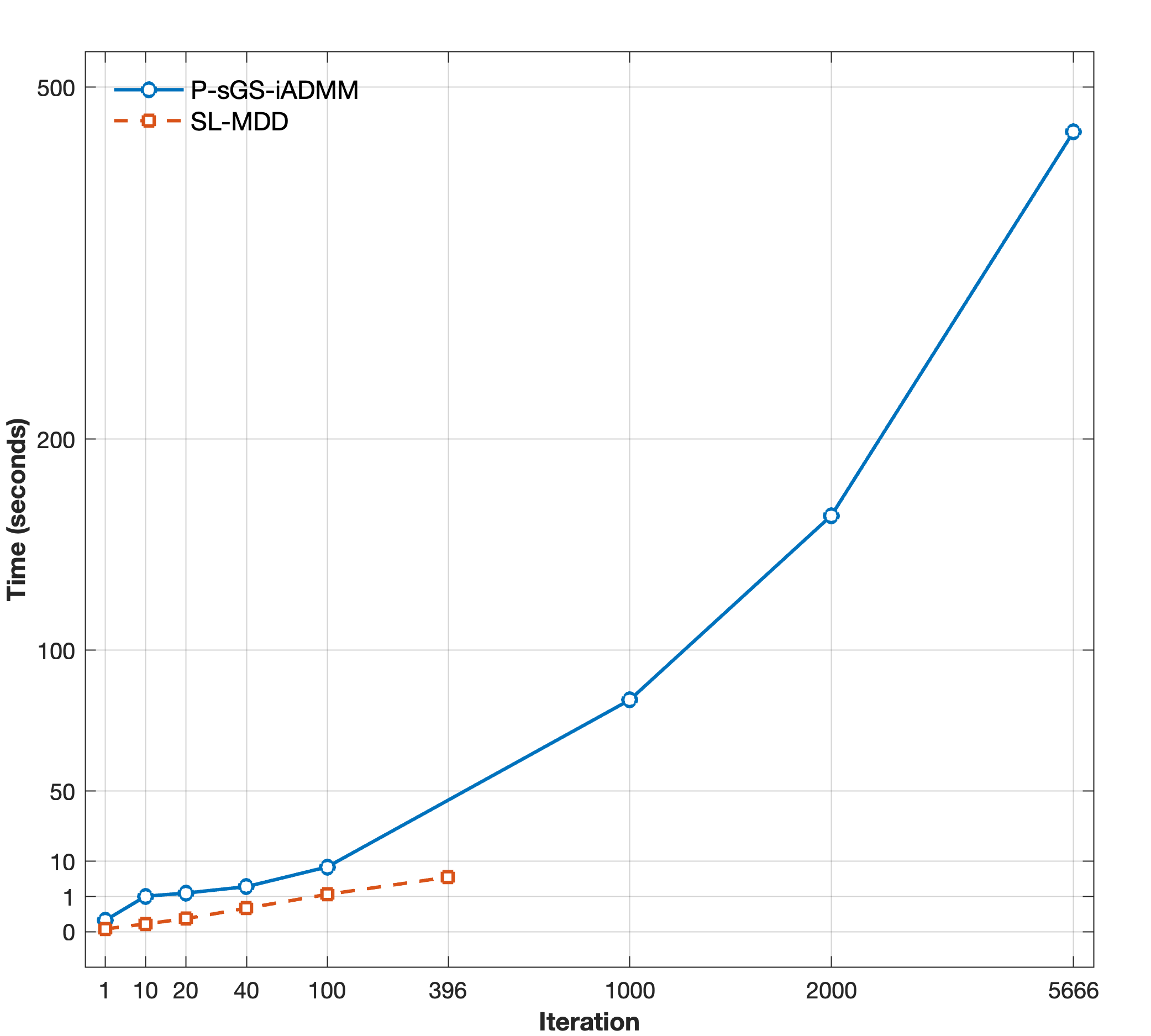}
    \\[2pt]
    \footnotesize(c) Parallel time vs iteration
  \end{minipage}
  \caption{Instance 6 (Experiment 2): $d=400, T=4$}
  \label{ffig:Instance6}
\end{figure}

\begin{table}[htbp]
\footnotesize
\caption{Performance comparison on Experiment 2, Group 2}\label{tab:performance5}
\centering
\begin{tabular}{ccccccccc}
\toprule
\multirow{2}{*}{Instances} & \multirow{2}{*}{$T$} & \multirow{2}{*}{$d$} & \multicolumn{2}{c}{Iter} & \multicolumn{2}{c}{Time (s)} & \multicolumn{2}{c}{Parallel Time (s)} \\
\cmidrule(r){4-5} \cmidrule(r){6-7} \cmidrule(r){8-9}
 & & & D & P & D & P & D & P \\
\midrule
        4 & 4 & 5    & 64    & 706   &  217.11   &1391.10   & 0.29     & 38.65      \\
        5 & 4 & 200  & 183    &  3597  &  1516.01 & 15566.38 & 2.80    &  242.08    \\
        6 & 4 & 400  & 396    & 5666 & 3763.09 & 56070.33 & 5.98   & 461.09   \\
\bottomrule
\end{tabular}
\end{table}

\begin{figure}[htbp]
  \centering
  \begin{minipage}{0.32\textwidth}
    \centering
    \includegraphics[width=\linewidth, height=0.8\linewidth]{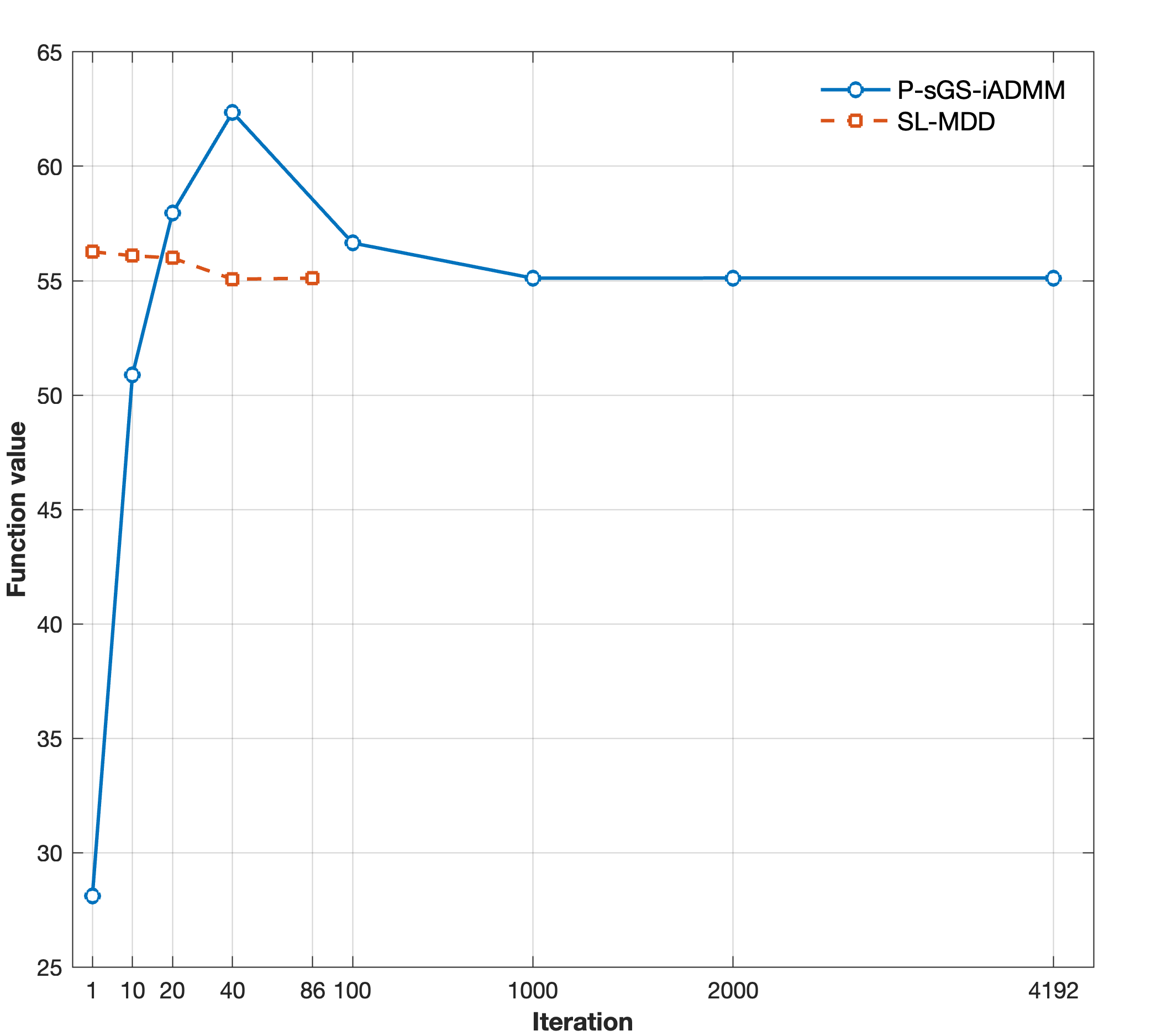}
    \\[2pt]
    \footnotesize(a) Function value vs iteration
  \end{minipage}
  \hfill
  \begin{minipage}{0.32\textwidth}
    \centering
    \includegraphics[width=\linewidth, height=0.8\linewidth]{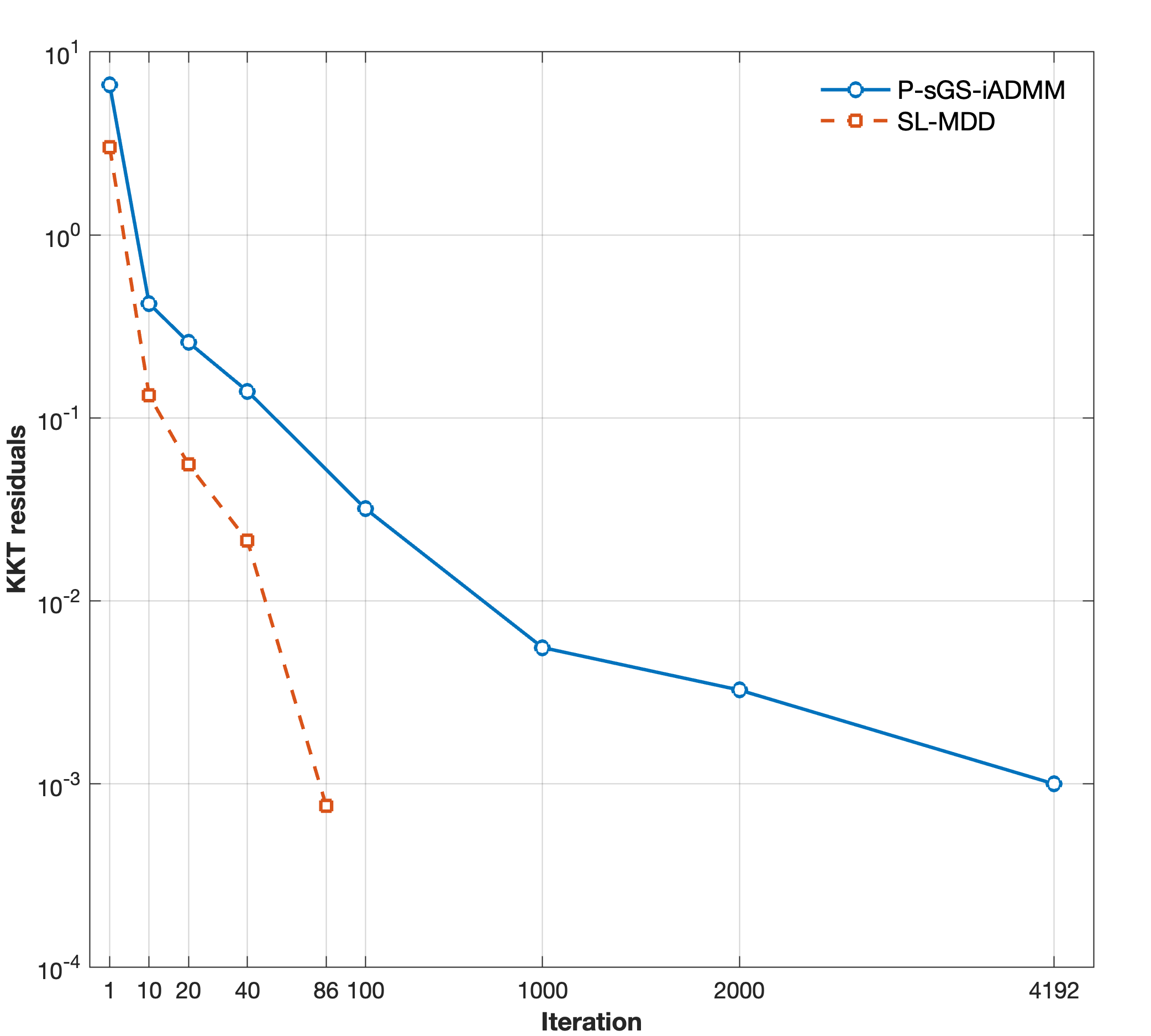}
    \\[2pt]
    \footnotesize(b) KKT residuals vs iteration
  \end{minipage}
  \hfill
  \begin{minipage}{0.32\textwidth}
    \centering
    \includegraphics[width=\linewidth, height=0.8\linewidth]{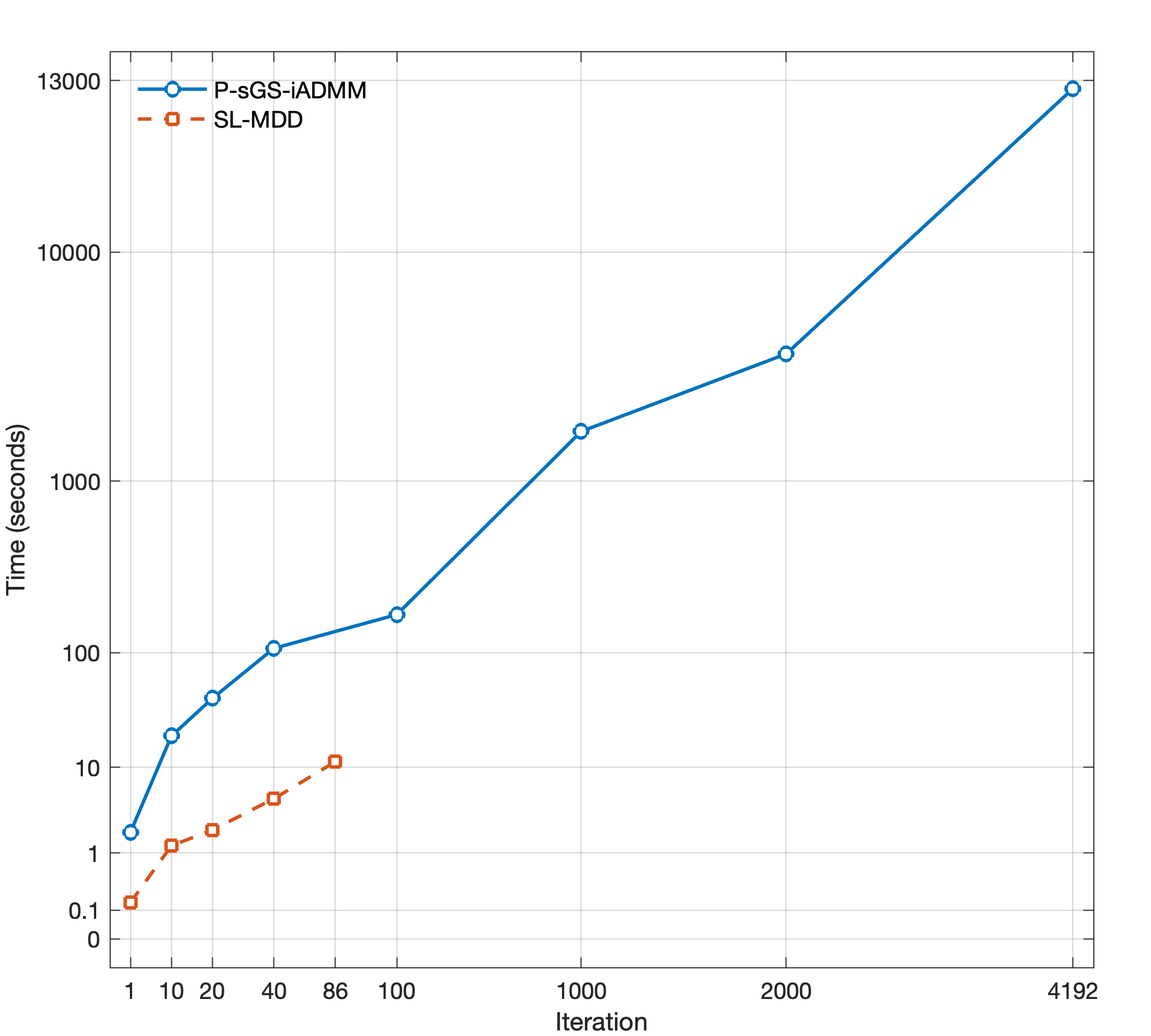}
    \\[2pt]
    \footnotesize(c) Parallel time vs iteration
  \end{minipage}
  \caption{Instance 7 (Experiment 2): $d=5, T=5$}
  \label{ffig:Instance7}
\end{figure}

\begin{table}[htbp]
\footnotesize
\caption{Performance comparison on Experiment 2, Group 3}\label{tab:performance6}
\centering
\begin{tabular}{ccccccccc}
\toprule
\multirow{2}{*}{Instances} & \multirow{2}{*}{$T$} & \multirow{2}{*}{$d$} & \multicolumn{2}{c}{Iter} & \multicolumn{2}{c}{Time (s)} & \multicolumn{2}{c}{Parallel Time (s)} \\
\cmidrule(r){4-5} \cmidrule(r){6-7} \cmidrule(r){8-9}
 & & & D & P & D & P & D & P \\
\midrule
1 & 3 & 5    & 74    & 1387  & 2.66      & 34.86     &0.14     & 1.37      \\
4 & 4 & 5    & 64    & 706   &  217.11   &1391.10   & 0.29     & 38.65      \\
7 & 5 & 5    & 86    & 4192   & 14329.40  & 415328.52 & 14.36    & 12860.03   \\
\bottomrule
\end{tabular}
\end{table}

For the case of exponential--$\ell_1$ composite objective functions, the results reported in Tables \ref{tab:performance4}--\ref{tab:performance6} and Figures \ref{ffig:Instance1}--\ref{ffig:Instance7} further illustrate the computational advantage of the SL-MDD method. In particular, Figures \ref{ffig:Instance1}--\ref{ffig:Instance7} show that it attains an optimal solution satisfying the KKT conditions in substantially fewer iterations than P-sGS-iADMM. Moreover, as the problem dimension $d$ and the number of stages $T$ increase, the trend in computational time remains consistent with that observed in Section \ref{sec:exp1}, indicating that the SL-MDD method maintains a faster performance compared to P-sGS-iADMM.

In summary, for both quadratic objective functions and the more general exponential and $\ell_1$-norm composite objective functions, the SL-MDD method exhibits better scalability and computational efficiency, making it practically useful for MSP with $T = 3$--$5$ and high-dimensional variables.

\section{Conclusions}\label{sec:conclusions}


In this paper, we develop a single-loop minorized dual decomposition (SL-MDD) framework for deterministic-equivalent nonsmooth multi-stage stochastic programming. The proposed method is derived from the intrinsic stage-wise and scenario-wise structure of MSP. At each iteration, a minorized problem is constructed, its restricted Wolfe dual is derived, and one sGS-iADMM sweep is performed to generate the next iterate. In this way, the minorization step, the dual reformulation, and the block splitting procedure are integrated into a unified single-loop recursion, rather than being organized as an outer-inner scheme for repeatedly solving successive subproblems.

From a computational perspective, the key feature of the proposed framework is that it reorganizes the stage/scenario coupling on the dual side and thereby yields a computationally favorable block structure. In particular, the $w$-updates reduce to structured linear-system solves, the $v$-updates reduce to proximal mappings, and the $z$-updates reduce to projection-type computations. The remaining coupled $y$-updates can be further decoupled by Lemma~\ref{lem:key}, which completes the decomposition of one iteration and makes the resulting subproblems suitable for stage-wise and scenario-wise parallel implementation.

For the three-stage case, we first show that the proposed SL-MDD method admits an equivalent single-loop minorized dual inexact semi-proximal ADMM representation, which is used only as a proof-oriented analytical device for convergence analysis. Based on this equivalent representation, we establish the global convergence of the generated iterates. The same proof pattern is then carried out for the general multi-stage setting, yielding the corresponding global convergence theorem. Numerical experiments illustrate the computational viability of the proposed framework and its favorable scaling behavior with respect to the stage-wise and scenario-wise structure of MSP instances.


\bibliographystyle{plain}\bibliography{reference}
\end{document}